\newtheorem{theorem}{Theorem}[section]
\newtheorem{introthm}{Theorem}
\newtheorem{corollary}[theorem]{Corollary}
\newtheorem{proposition}[theorem]{Proposition}
\newtheorem{lemma}[theorem]{Lemma}
\newtheorem{conjecture}[theorem]{Conjecture}
\theoremstyle{definition}
\newtheorem{definition}[theorem]{Definition}
\theoremstyle{remark}
\newcommand{\N}{\mathbb{N}}
\newcommand{\Z}{\mathbb{Z}}
\newcommand{\R}{\mathbb{R}}
\newcommand{\C}{\mathbb{C}}
\newcommand{\rmt}{\mathrm{t}}
\newcommand{\rmA}{\mathrm{A}}
\newcommand{\rmL}{\mathrm{L}}
\newcommand{\rmM}{\mathrm{M}}
\newcommand{\rmQ}{\mathrm{Q}}
\newcommand{\rmR}{\mathrm{R}}
\newcommand{\rmZ}{\mathrm{Z}}
\newcommand{\calA}{\mathcal{A}}
\newcommand{\calB}{\mathcal{B}}
\newcommand{\calC}{\mathcal{C}}
\newcommand{\calD}{\mathcal{D}}
\newcommand{\calE}{\mathcal{E}}
\newcommand{\calF}{\mathcal{F}}
\newcommand{\calH}{\mathcal{H}}
\newcommand{\calI}{\mathcal{I}}
\newcommand{\calK}{\mathcal{K}}
\newcommand{\calL}{\mathcal{L}}
\newcommand{\calM}{\mathcal{M}}
\newcommand{\calP}{\mathcal{P}}
\newcommand{\calR}{\mathcal{R}}
\newcommand{\calS}{\mathcal{S}}
\newcommand{\calV}{\mathcal{V}}
\newcommand{\calW}{\mathcal{W}}
\newcommand{\calX}{\mathcal{X}}
\newcommand{\calY}{\mathcal{Y}}
\newcommand{\calZ}{\mathcal{Z}}
\newcommand{\frakg}{\mathfrak{g}}
\newcommand{\fraku}{\mathfrak{u}}
\renewcommand{\epsilon}{\varepsilon}
\renewcommand{\theta}{\vartheta}
\renewcommand{\phi}{\varphi}
\renewcommand{\Gamma}{\varGamma}
\renewcommand{\Sigma}{\varSigma}
\newcommand{\id}{\mathrm{id}}
\newcommand{\tr}{\mathrm{tr}}
\newcommand{\ptr}{\mathrm{ptr}}
\newcommand{\lev}{\smash{\stackrel{\leftarrow}{\mathrm{ev}}}}
\newcommand{\lcoev}{\smash{\stackrel{\longleftarrow}{\mathrm{coev}}}}
\newcommand{\rev}{\smash{\stackrel{\rightarrow}{\mathrm{ev}}}}
\newcommand{\rcoev}{\smash{\stackrel{\longrightarrow}{\mathrm{coev}}}}
\newcommand{\leqs}{\leqslant}
\newcommand{\geqs}{\geqslant}
\newcommand{\din}{\overset{\cdot}{\Rightarrow}}
\newcommand{\mods}[1]{\operatorname{\mathnormal{#1}-mod}}
\newcommand{\comods}[1]{\operatorname{\mathnormal{#1}-comod}}
\newcommand{\cat}{\mathcal{C}}
\newcommand{\fsl}{\mathfrak{sl}}
\newcommand{\Hom}{\mathrm{Hom}}
\newcommand{\End}{\mathrm{End}}
\newcommand{\Vect}{\mathrm{Vect}}
\newcommand{\op}{\mathrm{op}}
\newcommand{\CE}{\mathrm{Z}}
\newcommand{\ACE}{\mathrm{Z}^S}
\newcommand{\QC}{\mathrm{Q}}
\newcommand{\AQC}{\mathrm{Q}^S}
\newcommand{\BQC}{\mathrm{Q}^g}
\newcommand{\AHE}{\mathrm{Z}^S_\mathrm{H}}
\newcommand{\AHF}{\mathrm{Q}^S_\mathrm{H}}
\newcommand{\BHF}{\mathrm{Q}^g_\mathrm{H}}
\newcommand{\ABM}{\mathrm{Z}^S_\mathrm{BM}}
\newcommand{\ABU}{\mathrm{Q}^S_\mathrm{BU}}
\newcommand{\BBU}{\mathrm{Q}^g_\mathrm{BU}}
\newcommand{\ACI}{\mathrm{Z}^S_\mathrm{I}}
\DeclareRobustCommand{\one}{\mathbin{\text{\includegraphics[height=\heightof{$\mathbf{1}$}]{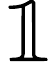}}}}
\newcommand{\Proj}{\mathrm{Proj}}
\newcommand{\subalign}[1]{
  \vcenter{
    \Let@ \restore@math@cr \default@tag
    \baselineskip\fontdimen10 \scriptfont\tw@
    \advance\baselineskip\fontdimen12 \scriptfont\tw@
    \lineskip\thr@@\fontdimen8 \scriptfont\thr@@
    \lineskiplimit\lineskip
    \ialign{\hfil$\m@th\scriptstyle##$&$\m@th\scriptstyle{}##$\crcr
      #1\crcr
    }
  }
}
\def\clap#1{\hbox to 0pt{\hss#1\hss}}
\newcommand{\dfrmtn}[1]{$#1$-de\-for\-ma\-tion}
\newcommand{\dfrmtns}[1]{$#1$-de\-for\-ma\-tions}
\newcommand{\qvlnc}[1]{$#1$-e\-quiv\-a\-lence}
\newcommand{\qvlnt}[1]{$#1$-e\-quiv\-a\-lent}
\newcommand{\stp}[1]{$#1$-i\-sot\-o\-py}
\newcommand{\stps}[1]{$#1$-i\-sot\-o\-pies}
\newcommand{\stpc}[1]{$#1$-i\-sot\-o\-pic}
\newcommand{\hndlstp}{han\-dle-i\-sot\-o\-py}
\newcommand{\hndlstps}{han\-dle-i\-sot\-o\-pies}
\newcommand{\hndlstpc}{han\-dle-i\-sot\-o\-pic}
\newcommand{\dmnsnl}[1]{$#1$-di\-men\-sion\-al}
\newcommand{\hndl}[1]{$#1$-han\-dle}
\newcommand{\hndls}[1]{$#1$-han\-dles}
\newcommand{\hndlbd}[1]{$#1$-han\-dle\-bod\-y}
\newcommand{\hndlbds}[1]{$#1$-han\-dle\-bod\-ies}
\newcommand{\mnfld}[1]{$#1$-man\-i\-fold}
\newcommand{\mnflds}[1]{$#1$-man\-i\-folds}
\newcommand{\pic}[2][0]{\raisebox{-0.5\height + 2.5pt + #1pt}{\includegraphics{Pictures/#2.pdf}}}
\newcommand\arxiv[2]{\href{https://arXiv.org/abs/#1}{\texttt{arXiv:\allowbreak #1} #2}}
\newcommand\doi[2]{\href{https://doi.org/#1}{#2}}
\newcommand*\oline[1]{%
  \vbox{%
    \hrule height 0.4pt%                 % Line above with certain width
    \kern2.1pt%                          % Distance between line and content
    \hbox{%
      \kern0.24pt%                        % Distance between content and left side of box, negative values for lines shorter than content
      \ifmmode#1\else\ensuremath{#1}\fi%  % The content, typeset in dependence of mode
      \kern0.24pt%                        % Distance between content and left side of box, negative values for lines shorter than content
    }% end of hbox
  }% end of vbox
}
\DeclareRobustCommand{\myuline}[1]{
 \ifmmode \text{\uline{$\phantom{#1}$}\llap{\contour{white}{$#1$}}}
 \else \uline{\phantom{#1}}\llap{\contour{white}{#1}} \fi
}
\def\namedlabel#1#2{\begingroup
    #2%
    \def\@currentlabel{#2}%
    \phantomsection\label{#1}\endgroup
}
\begin{document}

\raggedbottom

\title[Quantum Invariants of Ribbon Surfaces]{Quantum Invariants of Ribbon Surfaces in $4$-Dimensional $2$-Handlebodies}

\author[A. Beliakova]{Anna Beliakova} 
\address{Institute of Mathematics, University of Zurich, Winterthurerstrasse 190, CH-8057 Zurich, Switzerland} 
\email{anna@math.uzh.ch}

\author[M. De Renzi]{Marco De Renzi} 
\address{IMAG, Université de Montpellier, Place Eugène Bataillon, 34090 Montpellier, France}
\email{marco.de-renzi@umontpellier.fr}

\author[Q. Faes]{Quentin Faes} 
\address{Institut de Recherche en Mathématique et Physique, Université catholique de Louvain, Chemin du Cyclotron 2, 1348 Louvain-la-Neuve, Belgium} 
\email{quentin.faes@uclouvain.be}

\begin{abstract}
 We use unimodular ribbon categories to construct quantum invariants of ribbon surfaces in $4$-di\-men\-sion\-al $2$-han\-dle\-bod\-ies up to $1$-i\-sot\-o\-py. In the process, we recover invariants due to Bobtcheva--Messia, Broda--Petit, Gainutdinov--Geer--Patureau--Runkel (in collaboration with the second author), and Lee--Yetter. Our approach does not assume semisimplicity, and is based on a generalization of the Reshetikhin--Turaev functor to the category of labeled Kirby graphs which also yields invariants of framed links in the boundary of $4$-di\-men\-sion\-al $2$-han\-dle\-bod\-ies up to $2$-de\-for\-ma\-tions. The setup is very flexible, and allows for several different constructions, using central elements satisfying equations introduced by Hennings and Bobtcheva--Messia, modified traces, and modules over Frobenius algebras satisfying conditions dictated by the diagrammatic calculus for embedded surfaces developed by Hughes, Kim, and Miller.
\end{abstract}

\maketitle

\section{Introduction}

Smoothly embedded surfaces are crucial for understanding \dmnsnl{4} topology. They are featured prominently in surgery operations such as the Gluck construction \cite{G62}, and in presentations of closed \mnflds{4} by trisections \cite{GK12}. There is even a proposed a strategy for disproving both the smooth Poincaré conjecture in dimension~$4$ and the Andrews--Curtis conjecture for finite group presentations, due to Freedman, Gompf, Morrison, and Walker \cite{FGMW09}, that is based on ruling out the existence of $2$-disks smoothly embedded (with prescribed boundary) into a homotopy $4$-ball.

In this paper, we construct new families of non-semisimple quantum invariants of ribbon surfaces embedded into \dmnsnl{4} \hndlbds{2}. Our approach relies on an extension of Kirby calculus to pairs $(W,\varSigma)$ where $W$ is a closed smooth \mnfld{4} and $\varSigma \subset W$ is a smoothly embedded closed surface, following Hughes, Kim, and Miller. The underlying diagrams, known as \textit{banded unlink diagrams}, represent trivalent framed graphs consisting of a Kirby link $L$, corresponding to $W$, together with an unlink $U$ equipped with framed arcs (or bands) $B$ having endpoints on $U$, and corresponding to $\Sigma$. A complete set of moves, known as \textit{band moves}, relating banded unlink diagrams representing isotopic embeddings of surfaces is established in \cite{HKM18}.

On one hand, the first construction of a non-semisimple quantum invariant of \mnflds{4} can be found in \cite{BM02}, and is based on a unimodular ribbon Hopf algebra $H$. For their definition, Bobtcheva and Messia use a pair of an\-ti\-pode-in\-var\-i\-ant central elements $w,z \in H$ to label \hndls{1} and \hndls{2}, respectively, of a \mnfld{4} $W$. On the other hand, a quantum invariant of embedded surfaces has been constructed more recently in \cite{LY23} using the Reshetikhin--Turaev functor $F_\calC$ associated to a fusion ribbon category $\calC$. For this purpose, a Frobenius algebra $\calF \in \calC$ and an $\calF$-module $\calV \in \calC$ are used to label bands and unlink, respectively, in a banded unlink diagram of an embedded surface $\varSigma \subset W$. At the same time, the Kirby color\footnote{The Kirby color of a fusion ribbon category is a formal linear combination of a family of representatives of isomorphism classes of simple objects.} of $\calC$ is used to label \hndls{1} of $W$, while the one of a fusion ribbon subcategory $\calC' \subset \calC$ is used to label \hndls{2} of $W$. The drawback is that semisimple categories are known to be insensitive to exotic phenomena in smooth \dmnsnl{4} topology\footnote{Actually, \cite{LY23} contains the construction of a semisimple invariant that is announced to be sensitive to exotic pairs of framed knots inside an Akbulut cork, although, crucially, said invariant is not preserved by isotopies, see Sections~\ref{S:admissible_graphs} and \ref{S:Akbulut} for more details.}, see \cite{R20,BD21}. In this paper, we generalize both \cite{BM02} and \cite{LY23} to arbitrary unimodular ribbon categories, without assuming their semisimplicity. 

\subsection{\texorpdfstring{$2$-Equivalence and $1$-isotopy}{2-Equivalence and 1-isotopy}}

We will focus on a specific class of smooth \mnflds{4} with non-empty boundary called \textit{\dmnsnl{4} \hndlbds{2}}. In general, an \dmnsnl{n} \hndlbd{k} is a smooth \mnfld{n} constructed using a finite number of \dmnsnl{n} \hndls{i} for $0 \leqs i \leqs k$. What makes \dmnsnl{4} \hndlbds{2} interesting in their own right is that they are precisely the class of smooth \mnflds{4} that can be visualized and represented through Kirby diagrams. Indeed, even if we only cared about closed smooth \mnflds{4}, in order to manipulate them, we would typically only consider \dmnsnl{4} \hndlbds{2} and invoke \cite{LP72} to claim that there exists at most a unique way of attaching \dmnsnl{4} \hndls{3} and \hndls{4} to fill the boundary. 

If we keep track of handlebody structures, there exist other equivalence relations on the class of \dmnsnl{4} \hndlbds{2} beyond homotopy equivalence, homeomorphism, and diffeomorphism. A natural one, which is generated by a specific class of diffeomorphisms called \textit{\dfrmtns{2}}, is called \textit{\qvlnc{2}}. A \dfrmtn{k} is a diffeomorphism between \hndlbds{k} implemented by a finite sequence of elementary operations on handle attaching maps that never introduce handles of index $i > k$. In other words, we can relate a pair of \qvlnt{2} \dmnsnl{4} \hndlbds{2} by handle moves without ever stepping outside of the class of \dmnsnl{4} \hndlbds{2}. Not only this equivalence relation is natural in the context of \dmnsnl{4} \hndlbds{2}, but it also emerges spontaneously in the context of quantum invariants. Indeed, the invariant $J_\calC$ of \dmnsnl{4} \hndlbds{2} constructed in \cite{BD21} from the category $\calC = \mods{H}$ of representations of a unimodular ribbon Hopf algebra $H$ is guaranteed to be invariant under arbitrary diffeomorphisms only if $H^*$ is semisimple. Since the most interesting examples of unimodular ribbon Hopf algebras are typically not semisimple, and since semisimple ribbon categories are typically useless when it comes to detecting exotica, this means that, in principle, quantum invariants are better suited to study \qvlnc{2} than diffeomorphism. Notice that it is currently not known whether diffeomorphisms and \dfrmtns{2} determine different equivalence relations. Indeed, this question underlies a conjecture by Gompf, who predicted that an explicit family of \dmnsnl{4} \hndlbds{2} diffeomorphic to $D^4$ could not be simplified to the empty Kirby diagram by any \dfrmtn{2} \cite{G91}. Furthermore, this question can be understood as a \dmnsnl{4} analogue of the Andrews--Curtis conjecture, and disproving the latter would prove that \qvlnc{2} is different from diffeomorphism. Both questions remain wide open at present.

Similarly, instead of considering arbitrary surfaces smoothly embedded into a \dmnsnl{4} \hndlbd{2}, we focus on a specific class of surfaces with non-empty boundary that we call \textit{ribbon surfaces}. These are a generalization of ribbon surfaces in $D^4$ to arbitrary \dmnsnl{4} \hndlbds{2}, and can be defined as smoothly embedded \dmnsnl{2} \hndlbds{1}. Notice that the question of whether a $2$-disk with prescribed boundary smoothly embedded into $D^4$ is always smoothly isotopic to a ribbon surface is known as the slice-ribbon conjecture.

Just like in the case of \dmnsnl{4} \hndlbds{2}, there exists an internal equivalence relation on the class of ribbon surfaces that is expected to differ from homotopy, continuous isotopy, and smooth isotopy. It is generated by a specific class of smooth isotopies called \textit{\dfrmtns{(1,2)}}, and is called \textit{\stp{1}}. Once again, it is currently not known whether smooth isotopies and \stps{1} determine different equivalence relations.

\subsection{Forms, elements, traces, and modules}

In order to explain our algebraic setup, let us start by considering a \textit{unimodular ribbon category}\footnote{A unimodular category is in particular a finite category.} $\calC$, as defined in Section~\ref{S:unimodular_ribbon_categories}. Following Lyubashenko, we consider the Hopf algebra
\begin{equation}\label{E:end}
 \calE = \int_{X \in \calC} X \otimes X^*,
\end{equation}
which is defined as the end of the functor 
\begin{align*}
 (\_ \otimes \_^*) : \calC \times \calC^\op & \to \calC \\*
 (X,Y) & \mapsto X \otimes Y^*.
\end{align*}
For instance, if $\calC = \mods{H}$ for a unimodular ribbon Hopf algebra $H$, then $\calE$ is given by the adjoint representation $\myuline{H}$ of $H$.

In Section~\ref{S:H-forms_BM-elements_BU-forms}, we introduce several classes of \textit{central elements} and \textit{quantum characters} on an arbitrary Hopf algebra $\calH \in \calC$:
\begin{itemize}
 \item a \textit{Hennings form} $\varphi : \calH \to \one$ is a two-sided quantum character satisfying Equation~\eqref{E:Hennings_form};
 \item a \textit{Bobtcheva--Messia element} $w : \one \to \calE$ is an an\-ti\-pode-in\-var\-i\-ant central element such that there exists a \textit{compatible} Hennings form $\varphi : \calE \to \one$ satisfying Equation~\eqref{E:Bobtcheva-Messia_element}
 \item a \textit{banded unlink form} $\psi : \calE \to \one$ is a two-sided quantum character such that there exists a \textit{compatible} Bobtcheva--Messia element $w : \one \to \calE$ and a \textit{compatible} Hennings form $\varphi : \calE \to \one$ satisfying Equation~\eqref{E:banded_unlink_form}.
\end{itemize}
For instance, every two-sided integral form $\lambda : \calH \to \one$ provides a Hennings form, and every normalized two-sided integral element $\Lambda : \one \to \calH$ provides a compatible Bobtcheva--Messia element, see Lemma~\ref{L:integral_BM_pair}. Furthermore, understanding the set of an\-ti\-pode-in\-var\-i\-ant central elements of a Hopf algebra $\calH \in \calC$ can yield other Hennings forms and banded unlink forms, as follows from Lemmas~\ref{P:Hennings} and \ref{P:BU_form}, see Section~\ref{S:quantum_groups} for some concrete examples.

Next, in Section~\ref{S:ideals_traces}, we recall the definition of \textit{modified traces}, as introduced by Geer and Patureau in \cite{GKP10,GKP11}. If $\calI \subset C$ is an ideal, which is a full subcategory that is closed under retracts and absorbent under tensor products, a trace $\rmt$ on $\calI$ is a family of linear maps $\rmt_V : \End_\calC(V) \to \Bbbk$ for every $V \in \calI$ satisfying the \textit{cyclicity} and \textit{partial trace} properties appearing in Definition~\ref{D:trace}.

Finally, in Section~\ref{S:banded_unlink_modules}, we introduce \textit{ribbon Frobenius algebras}, which are Frobenius algebras $\calF \in \calC$ satisfying Equation~\eqref{E:Frobenius_ribbon}, and \textit{banded unlink modules}, which are objects $\calV \in \calC$ equipped with a left action $\rho : \calF \otimes \calV \to \calV$ of a ribbon Frobenius algebra such that the corresponding \textit{band morphism} $\calB : \calV^* \otimes \calV \to \calV^* \otimes \calV$ defined by Equation~\eqref{E:band_morphism} satisfies the \textit{transparency} and \textit{partial trace} properties appearing in Definition~\ref{D:banded_unlink_modules}.

\subsection{Kirby graphs and a generalization of Reshetikhin--Turaev functors}

In Section~\ref{S:labeled_Kirby_graphs}, we introduce the notion of \textit{$\calC$-labeled Kirby graphs}, which are the union of a Kirby link whose dotted and undotted components are colored by an\-ti\-pode-in\-var\-i\-ant central elements of $\calE$ and by two-sided quantum characters on $\calE$, respectively, and of a disjoint $\calC$-labeled ribbon graph. Isotopy classes of $\calC$-labeled Kirby graphs can be organized as the morphisms of a category $\calK_\calC$ that contains the category $\calR_\calC$ of $\calC$-labeled ribbon graphs as a subcategory.

In Section~\ref{S:KLRT_functor}, we construct a ribbon functor
\[
 F_\calC : \calK_\calC \to \calC,
\]
called the \textit{Kerler--Lyubashenko--Reshetikhin--Turaev functor}, that extends the original Reshetikhin--Turaev functor $F_\calC : \calR_\calC \to \calC$ of \cite{T94}. The definition is formulated in terms of \textit{bottom-top presentations} of Kirby tangles, which yield morphisms between tensor powers of $\calE$ thanks to the defining universal property of ends. The construction is a \dmnsnl{4} analogue of earlier extensions of the Reshetikhin--Turaev functor to \textit{$\calC$-labeled bichrome graphs} that first appeared in \cite{DGP17} and in \cite{DGGPR19}. This is going to provide the main underlying tool for all the constructions we carry out in this paper. 

\subsection{Main results}

Let us present our four main constructions in a form that is suitable for this introduction. Precise formulations and detailed proofs can be found in Section~\ref{S:Q-inv} and in Appendix~\ref{A:proofs}.

The first construction we carry out yields an invariant $J_\calC$ of \dmnsnl{4} \hndlbds{2} $W$. Given a Kirby link $L$, a Bobtcheva--Messia element $w$ of $\calE$, and a compatible Hennings form $\varphi$ on $\calE$, we denote by $L_{w,\varphi}$ the $\calC$-labeled Kirby link obtained by labeling every dotted component of $L$ by $w$ and every undotted component of $L$ by $\varphi$, as explained in Section~\ref{S:Bobtcheva-Messia}.

\begin{introthm}[Theorem~\ref{T:Bobtcheva-Messia_invariant}]\label{T:Bobtcheva-Messia_invariant_intro}
 If $w$ is a Bobtcheva--Messia element of $\calE$ and $\varphi$ is a compatible Hennings form on $\calE$, and if $W$ is a \dmnsnl{4} \hndlbd{2} represented by a Kirby link $L$, then
 \[
  J_\calC(W) = F_\calC(L_{w,\varphi})
 \]
 is invariant under \dfrmtns{2} of $L$.
\end{introthm}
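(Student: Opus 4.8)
The plan is to reduce the statement to the verification of a finite list of local moves. By the handle calculus for \dmnsnl{4} \hndlbds{2}, two Kirby links represent \qvlnt{2} handlebodies if and only if they are related by a finite sequence of elementary moves: ambient isotopy in $S^3$; handle slides of an undotted component over another undotted component; handle slides of an undotted component over a dotted component; and the birth or death of a cancelling pair consisting of a dotted unknot together with a $0$-framed meridional undotted circle. Since $J_\calC(W) = F_\calC(L_{w,\varphi})$ is by construction a scalar associated to the $\calC$-labeled Kirby link $L_{w,\varphi}$, it suffices to show that $F_\calC$ assigns the same value to the two labeled Kirby links on either side of each such move. I emphasize that we only need these \dfrmtn{2}-moves, not full diffeomorphism invariance, which is precisely what makes the non-semisimple setting accessible.

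Invariance under ambient isotopy is immediate, since $F_\calC : \calK_\calC \to \calC$ is well defined on isotopy classes of $\calC$-labeled Kirby graphs and the labels $w$ and $\varphi$ are carried along by the isotopy. For the remaining moves I would work with the bottom-top presentations used to define $F_\calC$: each move is supported in a ball meeting the link in finitely many strands, so after cutting along its boundary the move becomes an identity between two morphisms of $\calC$ built out of copies of $\calE$, the labels $w \colon \one \to \calE$ and $\varphi \colon \calE \to \one$, and the Hopf-algebra structure morphisms of $\calE$. Functoriality of $F_\calC$ then reduces each topological move to a single algebraic identity in $\calC$.

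For a handle slide of an undotted component over an undotted component, the slide duplicates the sliding strand along the other component, which on the algebraic side is implemented by the coproduct of $\calE$; the resulting identity is exactly the statement that $\varphi$ is a two-sided quantum character together with the Hennings form equation~\eqref{E:Hennings_form}, which guarantees that the two parallel copies recombine into a single $\varphi$-labeled component. For a handle slide of an undotted component over a dotted component, the strand is pushed past the $w$-labeled meridian of the $1$-handle: centrality of $w$ lets the strand cross freely, while antipode-invariance of $w$ guarantees that the evaluation is independent of the orientation of the dotted component, so that $F_\calC$ is unchanged.

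The decisive move is the birth/death of a cancelling $1$-$2$ pair. Deleting such a pair must leave $F_\calC$ untouched, which amounts to showing that a $w$-labeled dotted unknot encircled by a $0$-framed $\varphi$-labeled meridian evaluates to the identity (equivalently, to the scalar $1$ after closing). This is exactly the compatibility condition built into the definition of a Bobtcheva--Messia element, namely Equation~\eqref{E:Bobtcheva-Messia_element}, which was designed to normalize $w$ against the companion Hennings form $\varphi$. I expect this cancellation identity, together with the undotted-over-undotted slide, to be the main obstacle: both require translating a three-dimensional local picture into the end/coend calculus for $\calE$ and then matching it against the defining equations, and the bookkeeping of framings, orientations, and the universal dinatural structure of $\calE$ is where the real work lies. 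Once all four moves are verified, $F_\calC(L_{w,\varphi})$ depends only on the \qvlnc{2} class of $L$, which is the assertion.
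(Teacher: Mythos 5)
Your proposal follows essentially the same route as the paper: Proposition~\ref{P:2-qvlnc} reduces $2$-equivalence to the listed diagram moves, and the paper verifies the undotted-over-undotted slide and the cancelling-pair move in Propositions~\ref{P:slides_swims} and \ref{P:canceling_pairs} using exactly the identities you name (Equation~\eqref{E:Hennings_form}, applied through the banded unlink form property of $\varphi$, and Equation~\eqref{E:Bobtcheva-Messia_element}), all translated into morphisms of $\calC$ via bottom-top presentations and the universal property of $\calE$. The only cosmetic difference is that slides of undotted components over (or under) dotted ones are not generating moves in the paper's calculus --- they are derived from cancellations and undotted slides --- so your separate, somewhat heuristic verification of that move via centrality of $w$ is not actually needed.
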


When $\calC = \mods{H}$ for a unimodular ribbon Hopf algebra $H$ equipped with a normalized two-sided integral element $\Lambda \in H$ and left integral form $\lambda \in H^*$, the invariant $J_\calC$ recovers the Bobtcheva--Messia invariant of \cite{BM02} corresponding to a pair of an\-ti\-pode-in\-var\-i\-ant central elements $w,z \in H$ satisfying
\begin{itemize}
 \item $wz = \Lambda$,
 \item $z z_{(1)} \otimes z_{(2)} = z \otimes z$,
\end{itemize}
by setting $\varphi = \lambda(z\_)$. In particular, the construction encompasses the original Hennings invariant of \cite{H96}, which is obtained by considering a factorizable ribbon Hopf algebra $H$. Clearly, by taking a normalized two-sided integral element $\Lambda$ of $\calE$ and two-sided integral form $\lambda$ on $\calE$, it also recovers the invariant defined in \cite{BD21}. Furthermore, every unimodular ribbon subcategory $\calC' \subset \calC$ induces a Hopf epimorphism $\pi : \calE \to \calE'$, where $\calE' \in \calC'$ denotes the end of Equation~\eqref{E:end} for the finite ribbon category $\calC'$. If $\lambda'$ is a two-sided integral form on $\calE'$, then $\lambda' \circ \pi$ is a Hennings form on $\calE$ that is compatible with a two-sided integral element $\Lambda$ of $\calE$, see Proposition~\ref{P:dichromatic}. If $\calC$ is semisimple, then the construction can be extended to an invariant of closed \mnflds{4} that recovers the dichromatic invariant of Broda \cite{B95} and Petit \cite{P08} when $\calC$ is factorizable.

The second construction we carry out extends $J_\calC$ to an invariant of pairs $(W,\varSigma)$ where $W$ is a \dmnsnl{4} \hndlbd{2} and $\varSigma \subset W$ is a ribbon surface. Given a banded unlink $U \cup B$, a banded unlink form $\psi$ on $\calE$, a compatible Bobtcheva--Messia element $w$ of $\calE$, and a compatible Hennings form $\varphi$ on $\calE$, we denote by $L'_{\psi,w,\varphi}(U,B)$ the $\calC$-labeled Kirby link obtained by labeling the unlink $U$ by $\psi$, a doubled copy of the set of bands $B$ by $\varphi$, and their intersection $U \cap B$ by $w$, as explained in Section~\ref{S:invariants_banded_unlink_forms}.

\begin{introthm}[Theorem~\ref{T:ribbon_surface_invariant_quantum_character}]\label{T:ribbon_surface_invariant_quantum_character_intro}
 If $\psi$ is a banded unlink form on $\calE$, if $w$ is a compatible Bobtcheva--Messia element of $\calE$, if $\varphi$ is a compatible Hennings form on $\calE$, and if $\varSigma$ is a ribbon surface represented by a banded unlink $U \cup B$ inside a \dmnsnl{4} \hndlbd{2} $W$ represented by a Kirby link $L$, then
 \[
  J_\calC(W,\varSigma) = F_\calC(L_{w,\varphi} \cup L'_{\psi,w,\varphi}(U,B))
 \]
 is invariant under band moves of $L \cup U \cup B$.
\end{introthm}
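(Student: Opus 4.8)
The plan is to prove invariance of $J_\calC(W,\varSigma) = F_\calC(L_{w,\varphi} \cup L'_{\psi,w,\varphi}(U,B)) \in \kk$ under each generator of the equivalence relation defined by band moves. By \cite{HKM18} a finite complete list of moves relating banded unlink diagrams of isotopic ribbon surfaces is available, and together with the Kirby moves presenting the ambient \hndlbd{2} these generate the whole relation. I organize the generators into three families: (a) Kirby moves supported on the link $L$ in the complement of the banded unlink; (b) moves supported on $U \cup B$; and (c) moves in which a strand of $U \cup B$ slides over or swims through a component of $L$. For every generator the scheme is identical: isolate a ball in which the move takes place, pass to a bottom-top presentation of the Kirby tangle it encloses, and compare the two morphisms of $\calC$ that $F_\calC$ assigns to the configurations before and after the move.

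Family (a) is inherited from Theorem~\ref{T:Bobtcheva-Messia_invariant}. The components of $L$ carry exactly the labels $w$ and $\varphi$ used there, and since $F_\calC$ is a ribbon functor, a move confined to a ball disjoint from $U \cup B$ changes only the $L$-part of the evaluation; the invariance under \dfrmtns{2} established for $F_\calC(L_{w,\varphi})$ then applies verbatim. Family (b) reduces to local identities in $\calC$ dictated by the defining equations of the labels. That $\psi$ is a two-sided quantum character on $\calE$ accounts for planar isotopies and crossing changes of the $\psi$-labeled unlink $U$; births and deaths of cancelling band pairs and the elementary band slides are exactly the content of the banded unlink form equation~\eqref{E:banded_unlink_form}; the local picture at an attaching region $U \cap B$, where a doubled $\varphi$-labeled band meets the $\psi$-labeled unlink through the $w$-label, is controlled by the antipode-invariance and centrality of the Bobtcheva--Messia element $w$ together with~\eqref{E:Bobtcheva-Messia_element}; and sliding one doubled band over another is governed by the Hennings form equation~\eqref{E:Hennings_form}.

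Family (c) is where I expect the main obstacle. A swim move pushes a $\psi$- or $\varphi$-labeled strand of the surface through a $w$- or $\varphi$-labeled component of $L$, forcing the surface labels and the handle labels past one another; this is precisely the situation in which the three equations must be combined, rather than applied one at a time, and it is here that the mutual compatibility built into the definitions of $\psi$, $w$, and $\varphi$ is essential. Concretely, I would reduce each such move to the assertion that the composite obtained by passing the surface strand through the Kirby component equals the one in which the two are unlinked, expand both sides via the defining universal property of the end $\calE$ in~\eqref{E:end}, and then apply~\eqref{E:banded_unlink_form}, \eqref{E:Bobtcheva-Messia_element}, and~\eqref{E:Hennings_form} in concert. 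The delicate point throughout is the bookkeeping: one must keep track of the doubling of the bands labeled by $\varphi$, of the orientations and framings introduced by the bottom-top presentation, and of the order in which strands enter and leave the ball. Once a single normal form for the local picture of a swim move is fixed and the three compatibility equations are assembled correctly against it, the remaining interaction moves follow by the same mechanism, and invariance under the full set of band moves results.
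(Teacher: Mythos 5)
Your skeleton --- encode the banded unlink as extra labeled Kirby link components and check invariance move by move against the list of band moves --- is the paper's strategy, but the execution diverges in ways that leave a real gap. First, you have misplaced the difficulty. In Definition~\ref{D:1-stp_band_unlink} the swim moves push bands and undotted components through \emph{other bands of the surface}, not through components of $L$; the interactions between the surface and $L$ (slides over undotted components of $L$, travels of intersections through crossings) are actually the easy cases, handled directly by Proposition~\ref{P:slides_swims} because every green component in $L_{w,\varphi}\cup L'_{\psi,w,\varphi}(U,B)$ carries a form satisfying the slide condition against $\varphi$. Your ``family (c)'' plan of expanding through the universal property of $\calE$ and applying \eqref{E:banded_unlink_form}, \eqref{E:Bobtcheva-Messia_element}, and \eqref{E:Hennings_form} ``in concert'' is therefore aimed at the wrong target and, as stated, is not an argument: it defers exactly the verification that constitutes the proof.

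Second, your attribution of moves to equations in family (b) is off, and this hides the reductions that make the proof go through. Since a band of $B$ becomes a doubled $\varphi$-labeled green component pierced by a $w$-labeled purple meridian at each attaching region, a slide of a band over another band is \emph{not} ``exactly the content of'' \eqref{E:banded_unlink_form}; it is a slide of a purple $w$-labeled dotted component over another one, which is a \emph{derived} move obtained by combining Propositions~\ref{P:canceling_pairs} and \ref{P:slides_swims} (cancel, slide the resulting green components, re-create the canceling pair). Likewise a swim through a band is a slide of a green $\varphi$-labeled component \emph{under} a purple $w$-labeled component, again derived from the same two propositions, and the cup move is the chain-pair move of Proposition~\ref{P:cup}, which is where the second equation of \eqref{E:banded_unlink_form} enters; the first equation enters through the preliminary normal-form computation at the attaching regions ($\psi$ slides over $\varphi$, then the $\varphi$-labeled band cancels against its $w$-labeled meridian). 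Without identifying these three local lemmas and the derived Kirby-calculus moves they generate, the ``assembling the three compatibility equations against a normal form'' step you postpone cannot be completed as described.
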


The third construction we carry out yields an invariant $J'_\calC$ of pairs $(W,G)$, where $W$ is a \dmnsnl{4} \hndlbd{2} and $G \subset \partial W$ is a closed $\calC$-labeled Kirby graph. The definition requires a trace $\rmt$ on an ideal $\calI$ in $\calC$, as well as a Bobtcheva--Messia element $w$ of $\calE$ and a compatible Hennings form $\varphi$ on $\calE$. Furthermore, the closed $\calC$-labeled ribbon graph $G$ is required to be both \textit{$\calI$-admissible}, in the sense that it should feature an edge whose label belongs to $\calI$, and \textit{$\varphi$-compatible}, in the sense that all its labels should slide on $\varphi$, as explained in Section~\ref{S:admissible_graphs}. Given an $\calI$-admissible $\varphi$-compatible $\calC$-labeled ribbon graph $G$, we denote by $T_V(G)$ a \textit{cutting presentation} of $G$ obtained by cutting open an edge of $G$ labeled by an object $V \in \calI$, as explained in Section~\ref{S:admissible_graphs}.

\begin{introthm}[Theorem~\ref{T:admissible_4-dim_2-hb-invariant}]\label{T:admissible_4-dim_2-hb-invariant_intro}
 If $\rmt$ is a trace on an ideal $\calI \subset \calC$, if $w$ is a Bobtcheva--Messia element of $\calE$, if $\varphi$ is a compatible Hennings form on $\calE$, and if $G$ is an $\calI$-admissible $\varphi$-compatible $\calC$-labeled Kirby graph inside the boundary of a \dmnsnl{4} \hndlbd{2} $W$ represented by a Kirby link $L$, then
 \[
  J'_\calC(W,G) = \rmt_V(F_\calC(L_{w,\varphi} \cup T_V(G)))
 \]
 is invariant under \dfrmtns{2} of $L \cup G$.
\end{introthm}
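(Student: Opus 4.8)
The plan is to reduce the statement to two independent checks: first, that the scalar $\rmt_V(F_\calC(L_{w,\varphi} \cup T_V(G)))$ does not depend on the chosen cutting presentation $T_V(G)$; and second, that it is unchanged under each generating move of a \dfrmtn{2} of the pair $L \cup G$. A \dfrmtn{2} of $L \cup G$ is generated by (i) Kirby moves internal to $L$, (ii) isotopies of $G$ within $\partial W$, and (iii) handle slides that push strands of $G$ across the attaching regions of the handles of $W$. It therefore suffices to treat these three families separately, together with the well-definedness of the cut. Throughout, I note that since $L$ is closed and $T_V(G)$ has its single cut edge as both its only input and only output, the morphism $F_\calC(L_{w,\varphi} \cup T_V(G))$ lies in $\End_\calC(V)$, so that $\rmt_V$ returns a scalar.

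For well-definedness I would argue exactly as in the standard modified-trace formalism. Because $G$ is $\calI$-admissible, at least one edge carries a label in $\calI$, and any two cutting presentations differ either by the point at which a fixed $\calI$-labeled edge is cut, or by the choice of which $\calI$-labeled edge is cut open. In the first case the two values agree by the \emph{cyclicity} of $\rmt$ applied to the resulting endomorphism; in the second case they agree by the \emph{partial trace} property of $\rmt$, after recognizing the relevant endomorphism of $V$ as a partial trace over the object sitting on the complementary $\calI$-labeled edge. This is the only place where the defining axioms of $\rmt$ from Definition~\ref{D:trace} are used, and it runs independently of the handlebody moves.

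For moves of type (i) and (ii) I would reuse the analysis already carried out in the proof of Theorem~\ref{T:Bobtcheva-Messia_invariant}. Since the cut edge can be kept fixed and disjoint from the region in which an internal Kirby move takes place, linearity of $\rmt_V$ reduces the claim to the equality $F_\calC(L_{w,\varphi} \cup T_V(G)) = F_\calC(L'_{w,\varphi} \cup T_V(G))$ whenever $L \rightsquigarrow L'$ is a generating Kirby move; this is a local computation in $\calC$ identical to the one establishing invariance of $J_\calC$ under \dfrmtns{2}, now performed in the presence of the two inert boundary points coming from the cut. Moves of type (ii) are absorbed directly by the functoriality of the ribbon functor $F_\calC : \calK_\calC \to \calC$, since an isotopy of $G$ in $\partial W$ corresponds to an equality of morphisms in $\calK_\calC$.

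The crux is the family (iii) of slides of $G$ over the handles of $W$, and this is where the hypotheses that $G$ is $\varphi$-compatible and that $w$ is compatible with $\varphi$ enter decisively. When a strand of $G$ is pushed across a \hndl{2}, its labels must commute past the copy of $\varphi$ attached to that handle, which is precisely the sliding property encoded in $\varphi$-compatibility; a slide across a \hndl{1} is governed by $w$ and its compatibility with $\varphi$, exactly as for the handles of $L$ themselves. The main obstacle I anticipate is bookkeeping: one must guarantee that the cut edge can be positioned away from the portion of $G$ being slid, and that no generating move — in particular a destabilization removing the very edge that was cut — forces a simultaneous change of cutting presentation. I would resolve this by invoking well-definedness to relocate the cut onto a safe edge before each slide, so that move-invariance is always verified for a single fixed generic cutting presentation; the $\calI$-admissibility of $G$ guarantees that such a safe edge exists at every stage.
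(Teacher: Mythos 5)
Your proposal follows essentially the same route as the paper: well-definedness of the cut via the cyclicity and partial-trace axioms of $\rmt$ (Proposition~\ref{P:admissible_closed_graph_invariant}), invariance under the internal moves of $L$ via Theorem~\ref{T:Bobtcheva-Messia_invariant}, and invariance under slides of the $\varphi$-compatible edges of $G$ over $\varphi$-labeled undotted components via Proposition~\ref{P:arbitrary_slides}, with slides under dotted components obtained as derived moves. The decomposition into these checks and the role assigned to each hypothesis match the paper's argument.
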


As a special case, $J'_\calC$ yields an invariant of $\calI$-admissible $\varphi$-compatible $\calC$-labeled oriented framed links in the boundary of a \dmnsnl{4} \hndlbd{2}. When $\calC$ is factorizable, by taking a normalized two-sided integral element $\Lambda$ of $\calE$ and two-sided integral form $\lambda$ on $\calE$, the construction recovers the invariant of \cite{DGGPR19}.

The fourth construction specializes $J'_\calC$ to an invariant of pairs $(W,\varSigma)$ where $W$ is a \dmnsnl{4} \hndlbd{2} and $\varSigma \subset W$ is a ribbon surface. Given a banded unlink $U \cup B$ and a banded unlink module $\calV \in \calC$ over a Frobenius algebra $\calF \in \calC$, we denote by $G_{\calV,\calF}(U,B)$ the $\calC$-labeled ribbon graph obtained by labeling the unlink $U$ by $\calV$, the set of bands $B$ by $\calF$, and their intersection $U \cap B$ by $\rho$, as explained in Section~\ref{S:invariants_banded_unlink_modules}.

\begin{introthm}[Theorem~\ref{T:ribbon_surface_invariant_modified_trace}]\label{T:ribbon_surface_invariant_modified_trace_intro}
 If $\rmt$ is a trace on an ideal $\calI \subset \calC$, if $w$ is a Bobtcheva--Messia element of $\calE$, if $\varphi$ is a compatible Hennings form on $\calE$, if $\calV \in \calI$ is a banded unlink module over a $\varphi$-compatible Frobenius algebra $\calF \in \calC$, and if $\varSigma$ is a ribbon surface represented by a banded unlink $U \cup B$ inside a \dmnsnl{4} \hndlbd{2} $W$ represented by a Kirby link $L$, then
 \[
  J'_\calC(W,\varSigma) = \rmt_\calV(F_\calC(L_{w,\varphi} \cup T_\calV(G_{\calV,\calF}(U,B))))
 \]
 is invariant under band moves of $L \cup U \cup B$.
\end{introthm}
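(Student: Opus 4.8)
The plan is to derive Theorem~\ref{T:ribbon_surface_invariant_modified_trace} from the already-established admissible invariant of Theorem~\ref{T:admissible_4-dim_2-hb-invariant}, and then to promote its $\dfrmtn{2}$-invariance to invariance under the full set of band moves of \cite{HKM18}. First I would check that the $\calC$-labeled ribbon graph $G_{\calV,\calF}(U,B)$ is a legitimate input for Theorem~\ref{T:admissible_4-dim_2-hb-invariant}: since the unlink $U$ is labeled by $\calV \in \calI$, the graph possesses an edge whose label lies in $\calI$ and is therefore $\calI$-admissible, with $T_\calV(G_{\calV,\calF}(U,B))$ obtained by cutting open such an edge; and since $\calF$ is $\varphi$-compatible while the action $\rho$ and the object $\calV$ inherit sliding on $\varphi$ from the defining properties of banded unlink modules (Definition~\ref{D:banded_unlink_modules}), the whole graph is $\varphi$-compatible. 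Consequently $J'_\calC(W,\varSigma) = \rmt_\calV(F_\calC(L_{w,\varphi} \cup T_\calV(G_{\calV,\calF}(U,B))))$ is well defined and, by Theorem~\ref{T:admissible_4-dim_2-hb-invariant}, invariant under every $\dfrmtn{2}$ of $L \cup G_{\calV,\calF}(U,B)$.

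The remaining task is therefore to analyze the band moves of \cite{HKM18} that are not already accounted for as $\dfrmtns{2}$ of the ambient diagram. I would proceed move by move, translating each band move into a local modification of $G_{\calV,\calF}(U,B)$ and verifying that $F_\calC$ followed by $\rmt_\calV$ is unchanged. Planar isotopies and slides of the bands over components of $L$ reduce directly to functoriality of $F_\calC$ and to Theorem~\ref{T:admissible_4-dim_2-hb-invariant}. Band slides in which two bands merge or split, and saddle-type recombinations, are encoded by the multiplication and comultiplication of the Frobenius algebra $\calF$ and are controlled by its associativity, coassociativity, and Frobenius relations; the compatibility of $\rho$ with the action of $\calF$ governs the moves in which a band reattaches to the unlink; and framing changes of a band are absorbed by the ribbon condition \eqref{E:Frobenius_ribbon}.

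The hard part will be the moves that interact genuinely with the \dmnsnl{4} handle structure and with the cut: band swims passing a band through a $1$-handle (a dotted component labeled by $w$) or across a $2$-handle (an undotted component labeled by $\varphi$), together with the birth and death of a cancelling unlink-band pair. These are precisely the moves for which the two defining properties of the band morphism $\calB$ of \eqref{E:band_morphism} are designed: the transparency property lets the band configuration slide past the relevant handle without changing $F_\calC$, while the partial trace property ensures that creating or annihilating a small capped circle is invisible after the modified trace is applied. The most delicate point, which I expect to be the main obstacle, is reconciling these local identities with the global cut performed by $T_\calV$ and with the modified trace $\rmt_\calV$: one must verify that the value is independent of which edge is cut open and of the way the band morphism is slid across that cut. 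This independence should follow from matching the cyclicity and partial trace axioms of $\rmt$ (Definition~\ref{D:trace}) with the partial trace property of $\calB$, in exact parallel with how the banded unlink form $\psi$ is used in the proof of Theorem~\ref{T:ribbon_surface_invariant_quantum_character}; replacing the computation with $\psi$ by the corresponding computation with the pair $(\calV,\rmt)$ is what upgrades that argument to the non-semisimple, modified-trace setting.
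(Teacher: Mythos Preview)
Your overall strategy matches the paper's: verify the band moves of Definition~\ref{D:1-stp_band_unlink} one at a time, invoking Theorem~\ref{T:admissible_4-dim_2-hb-invariant} (or rather the underlying Propositions~\ref{P:canceling_pairs}, \ref{P:slides_swims}, \ref{P:arbitrary_slides}) for the purely $\dfrmtn{2}$ part, the Frobenius and ribbon axioms of $\calF$ for band-over-band slides and framing changes, and the two axioms on the band morphism $\calB$ for the remaining moves. Two points, however, need correction.

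First, you claim that $\calV$ inherits $\varphi$-compatibility from Definition~\ref{D:banded_unlink_modules}. It does not: that definition imposes only transparency and a partial-trace identity on $\calB$, neither of which implies Equation~\eqref{E:admissible_label} for $\calV$. Fortunately this is not needed. In the band-move list, the only slides over undotted (green, $\varphi$-labeled) components are slides of \emph{bands} (hence $\calF$-labeled edges) and of other \emph{undotted} components; the blue unlink $U$, which carries the label $\calV$, is never required to slide over green components. So only $\calF$ must be $\varphi$-compatible, which is exactly the hypothesis, and you cannot simply invoke Theorem~\ref{T:admissible_4-dim_2-hb-invariant} wholesale on $G_{\calV,\calF}(U,B)$.

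Second, you have misread the swim move. There is no move passing a band through a $1$-handle or a $2$-handle; swims pass a band or an undotted component \emph{through another band}. In the translated graph $G_{\calV,\calF}(U,B)$, a band becomes a configuration built from $\rho$, $\rho^\vee$, and the Frobenius structure of $\calF$, whose closure is the band morphism $\calB$ of Equation~\eqref{E:band_morphism}. The swim move therefore asks that an arbitrary strand can be dragged through this configuration, which is precisely the transparency axiom on $\calB$; it has nothing to do with the Kirby link $L$. The partial-trace axiom on $\calB$ handles the cup move, as you say. With these two corrections your outline coincides with the paper's proof.
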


Under suitable assumptions, the construction can be extended to an invariant of embedded closed surfaces that recovers the invariant of Lee--Yetter in \cite{LY23} when $\calC$ is semisimple and factorizable.

Taken together, Theorems~\ref{T:Bobtcheva-Messia_invariant_intro}--\ref{T:ribbon_surface_invariant_modified_trace_intro} provide flexible tools for obtaining non-semi\-simple quantum invariants of \dmnsnl{4} \hndlbds{2} up to \qvlnc{2}, of framed links in their boundary, and of ribbon surfaces up to \stp{1}. Although the construction is formulated in abstract categorical terms using Lyubashenko's end $\calE$ in a unimodular ribbon category $\calC$, it admits a very concrete translation for every unimodular ribbon Hopf algebra $H$, as explained in Section~\ref{S:algorithm}. Our approach thus simultaneously generalizes several existing invariants that were either formulated in terms of unimodular ribbon Hopf algebras, or in the special case of semisimple finite ribbon categories. Some computations are illustrated in Section~\ref{S:Akbulut}, and some examples are discussed in Sections~\ref{S:boundary_invariants}--\ref{S:quantum_groups}, although the search for more interesting ones will be the subject of future work, as discussed in Section~\ref{S:exact_module_categories}.

\subsection{Acknowledgments} 

The authors would like to thank Ivelina Bobtcheva, Azat Gainutdinov, Robert Laugwitz, and Riccardo Piergallini for the fruitful discussions and for the insightful advice they provided. The first and third authors were supported by the Simons Collaboration on New Structures in Low-Dimensional Topology and by grant 200020\_207374 of the Swiss National Science Foundation. The third author was also supported by the FNRS grant 1.B.176.24F.

\section{Topological preliminaries}\label{S:topological_preliminaries}

In this section, we introduce the notion of \textit{\stp{1}} for \textit{ribbon surfaces} in \dmnsnl{4} \hndlbds{2}, and we present a diagrammatic calculus for the corresponding equivalence relation based on banded unlink diagrams, as introduced by Hughes, Kim, and Miller in \cite{HKM18}.

Concerning our conventions and notations, all manifolds considered in this paper will be compact, smooth, and oriented, and all diffeomorphisms will be orientation-preserving. The unit $n$-disk will be denoted $D^n$ and the unit $n$-sphere will be denoted $S^n$. All corners that appear as a result of taking products between manifolds with boundary and of gluing manifolds along submanifolds of their boundaries can be smoothed canonically, up to diffeomorphism, and we will tacitly do so without further comment. The isotopies between proper embeddings we will consider are not in general required to restrict to the identity on the boundary. When we will need to consider such a restriction, we will always explicitly talk about isotopies relative to the boundary, to avoid confusion.

\subsection{\texorpdfstring{$4$-Dimensional $2$-handlebodies}{4-Dimensional 2-handlebodies} and \texorpdfstring{$2$-equivalence}{2-equivalence}}\label{S:hndlbds_qvlnc}

Let $n$ and $k \leqs n$ be natural numbers. Recall that an \textit{\dmnsnl{n} \hndl{k}} is a copy $D_{(k,n)}$ of the \mnfld{n} $D^k \times D^{n-k}$. Its \textit{core} is the \mnfld{k} $D^k \times \{ 0 \}$, while its \textit{cocore} is the \mnfld{(n-k)} $\{ 0 \} \times D^{n-k}$. The boundary $\partial D_{(k,n)}$ is composed of the \textit{attaching tube} $A_{(k,n)} := (\partial D^k) \times D^{n-k}$ and the \textit{belt tube} $B_{(k,n)} := D^k \times (\partial D^{n-k})$. These provide tubular neighborhoods, inside $\partial D_{(k,n)}$, of the \textit{attaching sphere} $(\partial D^k) \times \{ 0 \}$ and the \textit{belt sphere} $\{ 0 \} \times (\partial D^{n-k})$. An \dmnsnl{n} \hndl{k} $D_{(k,n)}$ can be attached to the boundary of an \mnfld{n} $X$ using an embedding $\varphi : A_{(k,n)} \hookrightarrow \partial X$. See \cite[Figure~4.1]{GS99} for an example of the attachment of a copy of $D_{(1,2)}$.

\begin{definition}\label{D:n-dmnsnl_k-hndlbd}
 An \textit{\dmnsnl{n} \hndlbd{k}}, or simply a \textit{handlebody}, is an \mnfld{n} $X$ equipped with a filtration
 \[
  \varnothing = X^{-1} \subseteq X^0 \subset \ldots \subset X^{k-1} \subset X^k = X
 \]
 of \mnflds{n} such that, for every index $0 \leqs i \leqs k$, the \mnfld{n} $X^i$ is obtained from the \mnfld{n} $X^{i-1}$ by first attaching a cylinder $(\partial X^{i-1}) \times [0,1]$ along the canonical identification $(\partial X^{i-1}) \times \{ 0 \} \to \partial X^{i-1}$ induced by the identity (in other words, a collar of the boundary), and then attaching $a_k$ copies of an \dmnsnl{n} \hndl{k} $D_{(k,n)}$ along an embedding $\varphi : (A_{(k,n)})^{\sqcup a_k} \hookrightarrow (\partial X^{i-1}) \times \{ 1 \}$.
\end{definition}

Most of the time, we will abusively denote handlebodies simply by their underlying manifold, without mention to the accompanying filtration. Up to specifying a collar of the boundary, we can always interpret an \dmnsnl{n} \hndlbd{k} as an \dmnsnl{n} \hndlbd{(k+1)} with no \hndls{(k+1)}, if $k < n$, and we will always tacitly do so, when needed. 

Notice that the structure of an \dmnsnl{n} \hndlbd{k} on an \mnfld{n} $X$ is equivalent to the choice of a self-indexing Morse function $h : X \to [0,k+\varepsilon]$ with $0 < \varepsilon < \frac{1}{2}$. In particular, all critical points of $h$ of index $i$ are located inside $h^{-1}(i)$, so that $X^i = h^{-1}([0,i+\varepsilon])$, see \cite[Section~4.2]{GS99}.

The following is a well-known classical result.

\begin{proposition}
 Every closed \mnfld{n} $X$ admits the structure of an \dmnsnl{n} \hndlbd{n}, and every \mnfld{n} $X$ with non-empty boundary admits the structure of an \textit{\dmnsnl{n} \hndlbd{(n-1)}}.
\end{proposition}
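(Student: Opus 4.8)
The plan is to deduce both statements from the correspondence, recorded just above, between \dmnsnl{n} \hndlbd{k} structures on $X$ and self-indexing Morse functions $h : X \to [0, k+\varepsilon]$. The only substantive input needed is therefore the classical fact that every compact manifold carries a Morse function that can be put into self-indexing form; I would take this from Morse theory (existence of Morse functions by genericity, together with the rearrangement theorem that orders the critical points by index, see \cite[Section~4.2]{GS99}). Granting this, the first statement is immediate: if $X$ is closed, the indices of the critical points of a Morse function $h : X \to \R$ range over $\{0, \dots, n\}$, so after rescaling and rearranging we obtain a self-indexing $h : X \to [0, n+\varepsilon]$, which is precisely an \dmnsnl{n} \hndlbd{n} structure.

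For the second statement I would first normalize the Morse function with respect to the boundary: choose $h : X \to [0, 1]$ with $h^{-1}(1) = \partial X$, no critical points in a collar of $\partial X$, and $\nabla h$ transverse to and outward-pointing along $\partial X$, so that every critical point lies in the interior of $X$. The key point is then that such an $h$ can be chosen without critical points of index $n$, i.e.\ without interior local maxima. Since a critical point of index $n$ is exactly what produces an \hndl{n} (attached along its full attaching sphere $A_{(n,n)} = \partial D^n \cong S^{n-1}$, thereby capping off an $(n-1)$-sphere in the boundary), ruling these out yields a self-indexing $h : X \to [0, (n-1)+\varepsilon]$ with $\partial X = h^{-1}((n-1)+\varepsilon)$ as top regular level, hence an \dmnsnl{n} \hndlbd{(n-1)} structure.

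The main work, and the step I expect to be the real obstacle, is the elimination of the index-$n$ critical points. Here I would use that $\partial X \neq \varnothing$ provides ``room'' to push any interior local maximum out to the boundary: given a maximum $p$ at level $c < 1$, I would connect $p$ to $\partial X$ by an embedded arc meeting no other critical point and meeting $\partial X$ transversally, and modify $h$ in a neighborhood of this arc so as to cancel $p$ against the boundary without creating new critical points (equivalently, in the dual handle decomposition built from $\partial X \times [0,1]$, an \hndl{n} appears as a $0$-handle, i.e.\ as a closed component disjoint from $\partial X$, which connectivity forbids). This is standard but delicate, and I would invoke the handle-cancellation and rearrangement machinery of Morse theory rather than reprove it. One genuine caveat to flag is that the second statement is only correct when every connected component of $X$ has nonempty boundary --- a closed component (e.g.\ the $S^n$ summand of $S^n \sqcup D^n$) still requires an \hndl{n} --- so I would either read the hypothesis as applying componentwise or assume $X$ connected, applying the first statement to any closed components.
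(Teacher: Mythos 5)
The paper offers no proof here: it states the proposition as ``a well-known classical result'' and moves on, so there is nothing to compare your argument against line by line. On its own terms, your proposal is the standard Morse-theoretic argument and is essentially correct. The first half (closed case) is exactly the existence of a self-indexing Morse function via genericity plus Smale rearrangement, which is what \cite[Section~4.2]{GS99} records. For the boundary case, your main route --- normalize $h$ so that $\partial X$ is the top regular level and then kill interior maxima by pushing them out along arcs to the boundary --- is a correct standard technique, and you rightly defer the delicate modification of $h$ to the usual cancellation/rearrangement machinery. Two remarks. First, your parenthetical reformulation is slightly off: in the relative decomposition built on $(\partial X) \times [0,1]$, connectivity of $X$ does \emph{not} forbid the occurrence of a \hndl{0} (the corresponding ball is disjoint from $(\partial X)\times[0,1]$ only at the moment of attachment; later handles may connect it), it merely guarantees that each such \hndl{0} can be cancelled against a \hndl{1}, which is what actually eliminates the dual \hndls{n}. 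Second, your caveat about components with empty boundary is well spotted and genuinely needed for the statement as literally written (e.g.\ $S^n \sqcup D^n$); it is consistent with the paper's standing convention, adopted shortly afterwards, of working only with connected handlebodies, so reading the claim componentwise is the right resolution.
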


\begin{definition}\label{D:k-dfrmtn}
 A \textit{\hndlstp} of an \dmnsnl{n} \hndlbd{k} $X$ is a finite sequence of isotopies of the attaching map of the set of \hndls{i} inside $(\partial X^{i-1}) \times \{ 1 \}$ for $0 \leqs i \leqs k$. Two \dmnsnl{n} \hndlbds{k} are said to be \textit{\hndlstpc} if they are related by a \hndlstp. Similarly, a \textit{\dfrmtn{d}} of an \dmnsnl{n} \hndlbd{k} $X$ is a finite sequence of:
 \begin{itemize}
  \item isotopies of the attaching map of the set of \hndls{i} inside $(\partial X^{i-1}) \times \{ 1 \}$ for $0 \leqs i \leqs d$;
  \item handle slides of \hndls{i} over other \hndls{i} for $0 \leqs i \leqs d$;
  \item creation/removal of a canceling pairs of \hndls{(i-1)/i} for $1 \leqs i \leqs d$.
 \end{itemize}
 Two \dmnsnl{n} \hndlbds{k} are said to be \textit{\qvlnt{d}} if they are related by a \dfrmtn{d}.
\end{definition}

Notice that, if two \dmnsnl{n} \hndlbds{k} are \qvlnt{k}, then they can be deformed into one another without ever stepping outside of the class of \dmnsnl{n} \hndlbds{k}. For the following well-known results, see \cite[Theorem~4.2.12]{GS99} and \cite[Section~1.2]{BP11}.

\begin{proposition}\label{P:diff_iff_n-1-equiv}
 Two \dmnsnl{n} \hndlbds{n} $X$ and $X'$ are diffeomorphic if and only if they are \qvlnt{n}, and two \dmnsnl{n} \hndlbds{(n-1)} $X$ and $X'$ are diffeomorphic if and only if they are \qvlnt{(n-1)}.
\end{proposition}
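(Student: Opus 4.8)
The plan is to prove each of the two biconditionals by treating its two implications separately. The implication from \qvlnc{d} to diffeomorphism is the routine direction in both cases: it suffices to check that each elementary move in Definition~\ref{D:k-dfrmtn} preserves the diffeomorphism type of the underlying \mnfld{n}. Isotopies of attaching maps and handle slides alter the attaching embedding only up to ambient isotopy inside $(\partial X^{i-1}) \times \{1\}$, and the diffeomorphism type produced by attaching a handle depends only on the isotopy class of its attaching embedding; the creation or cancellation of a complementary pair of \hndls{(i-1)/i} leaves the manifold unchanged up to diffeomorphism by the standard handle cancellation lemma. Applying this to a whole finite sequence of moves settles this direction for both the \hndlbd{n} and the \hndlbd{(n-1)} statements.

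For the converse I would use the dictionary between handlebody structures and self-indexing Morse functions recalled after Definition~\ref{D:n-dmnsnl_k-hndlbd}. Given a diffeomorphism $F : X \to X'$, pulling back the Morse function that encodes the handlebody structure of $X'$ produces two self-indexing Morse functions $h_0, h_1$ on the single \mnfld{n} $X$, one for each of the two structures to be compared. The core tool is then Cerf's one-parameter Morse theory: join $h_0$ and $h_1$ by a generic smooth path $(h_t)_{t \in [0,1]}$. Genericity guarantees that $h_t$ is an excellent Morse function for all but finitely many parameters, and that at each exceptional parameter exactly one codimension-one degeneration occurs, namely a birth or death of a canceling pair of critical points of consecutive indices, or a crossing of two critical values. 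Reading the path back through the dictionary, a birth or death is precisely the creation or cancellation of a canceling pair of handles, while a critical-value crossing---together with the continuous motion of the ascending and descending spheres along the path---is realized by handle slides and isotopies of attaching maps. Interleaving the rearrangement lemma to keep the intermediate functions self-indexing (which again only costs isotopies and slides) exhibits the two structures as related by a finite sequence of the moves of Definition~\ref{D:k-dfrmtn}.

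In the \hndlbd{n} (closed) case this immediately gives \qvlnc{n}: every handle of a closed \mnfld{n} has index at most $n$, so the path never leaves the class of \dmnsnl{n} \hndlbds{n} and every move it produces is legal in an \dfrmtn{n}. The main obstacle is the \hndlbd{(n-1)} (bounded) case, where one must certify that the Cerf path can be chosen so that it never introduces an \hndl{n}, i.e. never creates an index-$n$ critical point. Here the two structures being compared are \dmnsnl{n} \hndlbds{(n-1)}, so $h_0$ and $h_1$ themselves have no index-$n$ critical points, and I would take them adapted to the boundary, with $\partial X$ realized as the top regular level set $h^{-1}((n-1)+\varepsilon)$. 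The delicate point to establish is that $h_0$ and $h_1$ can be joined by a generic path within the subspace of such boundary-adapted functions that carry no index-$n$ critical point: any local maximum a naive homotopy would create can be removed because the boundary collar leaves room to push such a maximum out to the top regular level. Granting this---which is exactly the boundary bookkeeping carried out in \cite[Theorem~4.2.12]{GS99} and \cite[Section~1.2]{BP11}---the resulting sequence of moves never produces an \hndl{n}, so the two \dmnsnl{n} \hndlbds{(n-1)} are \qvlnt{(n-1)}.
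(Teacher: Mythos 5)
The paper gives no proof of this proposition, deferring entirely to \cite[Theorem~4.2.12]{GS99} and \cite[Section~1.2]{BP11}, and your Cerf-theoretic outline is exactly the standard argument underlying those citations, so the two approaches coincide. One caveat: the mechanism you offer for the delicate bounded case---pushing a newly born local maximum out through the boundary collar---is not quite how the references dispose of index-$n$ critical points (the standard device is handle trading: each \hndl{n}, whose attaching sphere is an entire $S^{n-1}$-component of the boundary of the intermediate skeleton, is canceled against an \hndl{(n-1)}); but since you explicitly defer precisely this step to the same sources the paper cites, the structure of your argument is unaffected.
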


In this paper, we will focus on the case $n = 4$ and $k = 2$, and we will always consider connected \dmnsnl{4} \hndlbds{2}. Up to \dfrmtns{1}, every connected \dmnsnl{4} \hndlbd{2} can be assumed to have a single \hndl{0}, see for instance \cite[Proposition~1.2.4]{BP11}. Therefore, we will always assume that every connected \dmnsnl{4} \hndlbd{2} we consider is obtained from a single copy of $D^4$ by gluing a finite number of \hndls{1} and \hndls{2} to it.

Thanks to Proposition~\ref{P:diff_iff_n-1-equiv}, two \dmnsnl{4} \hndlbds{2} $W$ and $W'$ are diffeomorphic if and only if they are \qvlnt{3}. It is not known whether the same holds for \qvlnc{2}. Indeed, it was suggested in \cite[Conjecture~B]{G91} that the answer might be negative, and an explicity family of concrete counterexamples was proposed.

\begin{conjecture}[Gompf]\label{C:2-qvlnc}
 There exist \dmnsnl{4} \hndlbds{2} that are diffeomorphic but not \qvlnt{2}.
\end{conjecture}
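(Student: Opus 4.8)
The plan is to \emph{separate} the two equivalence relations by producing an invariant that is constant on \qvlnc{2} classes but not on diffeomorphism classes, and then to evaluate it on a suitable family of pairwise diffeomorphic handlebodies. The natural candidate is the invariant $J_\calC$ of Theorem~\ref{T:Bobtcheva-Messia_invariant_intro}: by construction it is invariant under \dfrmtns{2}, yet---as stressed in the introduction---when $\calC = \mods{H}$ it is forced to descend to a diffeomorphism invariant only under the extra hypothesis that $H^*$ be semisimple. Consequently, for a \emph{non-semisimple} unimodular ribbon category $\calC$ there is no a priori reason for $J_\calC$ to be diffeomorphism-invariant, and a single diffeomorphic pair $W \cong W'$ with $J_\calC(W) \neq J_\calC(W')$ would immediately establish the conjecture. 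Exhibiting such a separation is precisely the purpose for which the non-semisimple machinery of this paper is designed.

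Concretely, I would fix Gompf's explicit family from \cite{G91}: a sequence of \dmnsnl{4} \hndlbds{2} $W_j$, all diffeomorphic to $D^4$, whose Kirby diagrams are conjectured not to simplify to the empty diagram under any \dfrmtn{2}. Since $D^4$ is represented by the empty Kirby link, $J_\calC(D^4) = 1$ up to normalization, so it suffices to compute $J_\calC(W_j)$ for a well-chosen non-semisimple $\calC$ and to find one index with $J_\calC(W_j) \neq 1$. In practice one selects a Bobtcheva--Messia element $w$ and a compatible Hennings form $\varphi$ on the end $\calE$ of \eqref{E:end}---for instance arising from the small-quantum-group examples of Section~\ref{S:quantum_groups}---labels the Kirby link $L_j$ of $W_j$ by $(w,\varphi)$, and evaluates $F_\calC((L_j)_{w,\varphi})$ through its bottom-top presentation and the universal property of $\calE$.

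The hard part---indeed the reason the conjecture is open---is twofold. Algebraically, although $J_\calC$ is \emph{not forced} to be a diffeomorphism invariant in the non-semisimple case, one must actually \emph{prove} that for the chosen $\calC$ it fails to be one, i.e.\ that the data $(w,\varphi)$ genuinely detect the non-semisimplicity of $\calE$ rather than collapsing to a diffeomorphism invariant; at present no unimodular ribbon category is known in which this sensitivity has been verified. Topologically, the diffeomorphisms $W_j \cong D^4$ are realized only through \dfrmtns{3} by Proposition~\ref{P:diff_iff_n-1-equiv}, and there is no known method to certify the \emph{absence} of a \dfrmtn{2}; this is exactly a \dmnsnl{4} shadow of the Andrews--Curtis problem for the balanced group presentations underlying the $W_j$. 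For these reasons I would frame the above as a \emph{program} rather than a proof: the invariant $J_\calC$ reduces the conjecture to the single task of locating a non-semisimple $\calC$ powerful enough to distinguish Gompf's family from $D^4$, and carrying out that computation---together with establishing the requisite non-invariance---remains genuinely open. Lacking such a category, the statement must be recorded as a conjecture, exactly as it stands.
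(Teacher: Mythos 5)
This statement is a conjecture (due to Gompf), and the paper offers no proof of it --- it is recorded as open, with the remark that both this question and its Andrews--Curtis analogue ``remain wide open at present.'' Your proposal correctly recognizes this, does not claim a proof, and accurately outlines the program that motivates the paper (use a \dfrmtn{2}-invariant such as $J_\calC$ built from a non-semisimple unimodular ribbon category to distinguish Gompf's family $W_j \cong D^4$ from the empty diagram), including the honest caveats that no category has yet been shown to have the required sensitivity and that the authors' own computations in Sections~\ref{S:Akbulut}--\ref{S:quantum_groups} have so far failed to separate such examples; your assessment is consistent with the paper's treatment.
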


\subsection{Kirby links and their diagrams}\label{S:Kirby_links_diagrams}

A \dmnsnl{4} \hndlbd{2} can be presented by a \textit{Kirby link} in $S^3$, which encodes the attaching maps of the handles. We recall some definitions and refer to \cite[Chapter~5]{GS99} for more details. In the following, we will identify $S^3$ with the one-point compactification of $\R^3$, so that the cube $[0,1]^{\times 3}$ can be understood as a subset of $S^3$.

\begin{definition}\label{D:Kirby_tangle}
 A \textit{Kirby tangle $T = T_1 \cup T_2 \subset [0,1]^{\times 3}$} is the union of an (unframed unoriented) unlink $T_1$ and of a disjoint (framed unoriented) tangle $T_2$ whose boundary is composed of a finite set of points contained in $[0,1] \times \{ \frac{1}{2} \} \times \{ 0,1 \} \subset [0,1]^{\times 3}$, each equipped with the framing provided by the unit normal vector $(0,1,0) \in \R^3$.
\end{definition}

If $T = T_1 \cup T_2$ is a Kirby tangle, then the components of $T_1$ will be marked by a dot, in order to distinguish them from the closed components of $T_2$. The dotted unlink $T_1$ will be drawn in purple, while the undotted tangle $T_2$ will be drawn in green. Therefore, a Kirby tangle $T = T_1 \cup T_2$ such that $T_1 = \varnothing$ will sometimes be called a \textit{green} tangle.

\begin{definition}\label{D:Kirby_tangle_diagram}
 A \textit{diagram} of a Kirby tangle $T = T_1 \cup T_2 \subset [0,1]^{\times 3}$ is a generic projection of $T$ to $[0,1]^{\times 2}$ without tangency points or triple points, equipped with a crossing state for every double point, and mapping the framing of $T_2$ to the blackboard framing. A diagram of a Kirby tangle is \textit{regular} if the image of the dotted unlink $T_1$ bounds a set of Seifert disks embedded in $[0,1]^{\times 2}$ (that are allowed to intersect the image of the undotted tangle $T_2$, but that are not allowed to intersect each other).
\end{definition} 

By abuse of notation, a regular diagram of a Kirby tangle $T = T_1 \cup T_2 \subset [0,1]^{\times 3}$ will also be denoted by $T = T_1 \cup T_2 \subset [0,1]^{\times 2}$.

\begin{definition}
  A \textit{Kirby link} $L = L_1 \cup L_2 \subset [0,1]^{\times 3} \subset S^3$ is a Kirby tangle with empty boundary.
\end{definition}

Dotted components of Kirby links represent \dmnsnl{4} \hndls{1}, while undotted ones represent \dmnsnl{4} \hndls{2}. Indeed, it is possible to construct a \dmnsnl{4} \hndlbd{2} $W(L)$ starting from a regular diagram of a Kirby link $L$ in the following way:
\begin{itemize}
 \item $W(L)^1$ is obtained from $D^4$ by carving out an open tubular neighborhood of a finite set of properly embedded Seifert disks for the dotted unlink $L_1$, one for each component;
 \item $W(L)^2$ is obtained from $W(L)^1$ by attaching a finite set of \dmnsnl{4} \hndls{2} along a tubular neighborhood of the undotted link $L_2$, using the framing in order to determine the identification.
\end{itemize}
If the boundary of $W(L)$ is diffeomorphic to $(S^1 \times S^2)^{\# n}$ for some integer $n \geqs 0$, then there exists a unique way, up to diffeomorphism, of gluing $n$ copies of a \dmnsnl{4} \hndl{3} and a single \dmnsnl{4} \hndl{4} to $W(L)$ to obtain a closed \mnfld{4} $\hat{W}(L)$, as proved in \cite{LP72}. We refer to \cite[Sections~4.4 \& 5.4]{GS99} for more details.

\begin{proposition}
 For every connected \dmnsnl{4} \hndlbd{2} $W$ there exists a regular diagram of a Kirby link $L$ such that $W$ and $W(L)$ are \hndlstpc.
\end{proposition}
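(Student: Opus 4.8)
The plan is to build the regular Kirby link diagram $L$ directly from a handlebody structure on $W$, and then to check that the reconstruction $W(L)$ recovers $W$ up to \hndlstp. First I would invoke the reduction recalled just above the statement and assume that $W$ is obtained from a single \hndl{0}, that is, from one copy of $D^4$, by attaching finitely many \hndls{1} and \hndls{2}. The \hndls{1} are attached along an embedding of a disjoint union of copies of $S^0 \times D^3$ into $S^3 = \partial D^4$, that is, along a disjoint family of pairs of $3$-balls. By the isotopy extension theorem, any two embeddings of a fixed disjoint union of $3$-balls into $S^3$ are ambiently isotopic, so up to \hndlstp I may assume that these ball-pairs lie in a standard position, arranged so that each pair is joined by an unknotted, mutually unlinked arc.

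Next I would pass from the ball-pair description of the \hndls{1} to the dotted-circle description. For each \hndl{1}, Akbulut's carving construction replaces $D^4$ together with the \hndl{1} by $D^4$ with an open tubular neighborhood of a properly embedded disk removed, the boundary of that disk being an unknotted circle in $S^3$. Because the ball-pairs were placed in standard disjoint position, the resulting circles form an unlink bounding disjoint embedded Seifert disks in the plane, which is exactly the regularity condition of Definition~\ref{D:Kirby_tangle_diagram}. This produces the dotted part $L_1$ together with a choice of disjoint Seifert disks, and the carving diffeomorphism $W^1 \cong W(L)^1$ is the standard one, which is supported near the disks and hence compatible with \hndlstp.

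Then I would treat the \hndls{2}, which attach along a framed link in $\partial W^1 \cong \#^g(S^1 \times S^2)$, where $g$ is the number of \hndls{1}. Using the identification of $\partial W^1$ with the boundary of the carved copy of $D^4$, I would isotope this framed link so that it meets the chosen Seifert disks transversely and record it as an undotted framed tangle $L_2$ in the complement of the dotted circles, with framing encoded by the blackboard framing as prescribed in Definition~\ref{D:Kirby_tangle_diagram}. A generic projection to the plane, perturbed to remove tangencies and triple points, then yields a regular diagram $L = L_1 \cup L_2$. Running the construction of $W(L)$ on this diagram carves out exactly the chosen disks, recovering $W^1$, and attaches \hndls{2} along the recorded framed link, recovering $W^2 = W$.

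The main obstacle is the final bookkeeping step: I must ensure that the composite identification of $W$ with $W(L)$ is a \hndlstp and not merely a diffeomorphism, so that no handle slides or handle cancellations are secretly introduced. Concretely, this amounts to checking that the standard diffeomorphism relating the ball-pair and dotted-circle presentations of the \hndls{1} carries the original \hndl{2} attaching link to the recorded link $L_2$ up to isotopy within $\partial W^1$, so that the two families of attaching maps agree up to isotopy. Granting this compatibility, which is where the bulk of the care is required, the two handle decompositions differ only by isotopies of attaching maps, and therefore $W$ and $W(L)$ are \hndlstpc, as claimed.
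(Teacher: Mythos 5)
Your proposal is correct and follows essentially the same route as the paper: both pass to the dotted-circle (carving) presentation of the \hndls{1}, record the \hndl{2} attaching link as the undotted part, and obtain a regular diagram from a generic projection. The only cosmetic difference is that the paper produces the carved disks as cocores of canceling \hndls{2} and then achieves regularity by planar isotopies and Reidemeister moves on the projected unlink, whereas you arrange the \hndl{1} attaching regions in standard position upstairs before projecting; both resolve the same bookkeeping issue you flag at the end.
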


This is well-know, but let us sketch a proof, for convenience. 

\begin{proof}
 Let us consider a \dmnsnl{4} \hndlbd{2} $W$. By attaching a complementary \hndl{2} to every \hndl{1}, and by removing the resulting canceling pair, we obtain a copy of $D^4$ with only \hndls{2} attached to it. The cocores of the canceling \hndls{2} are $2$-disks properly embedded into $D^4$, whose boundary $L_1$ is disjoint from the image $L_2$ of the attaching spheres of the original set of \hndls{2}. This yields a Kirby link $L \subset S^3$ representing $W$, and up to \hndlstp, we can suppose that $L \subset [0,1]^{\times 3}$. By considering a generic projection to $[0,1]^{\times 2}$, we get a diagram of $L$. Since the image of $L_1$ forms the diagram of an unlink, we can apply isotopies in $[0,1]^{\times 2}$ and Redemeister moves and to turn it into a trivial diagram (notice that we can only use framed Reidemeister moves for the image of $L_2$).
\end{proof}

We can translate \qvlnc{2} of \dmnsnl{4} \hndlbds{2} into an equivalence relation between regular diagrams of Kirby links, following \cite{BP11}.

\begin{definition}\label{D:2-dfrmtn}
 A \textit{\dfrmtn{2}} of a regular diagram of a Kirby link $L = L_1 \cup L_2$ is any of the operations appearing in the following list:
 \begin{itemize}
  \item isotopies in $[0,1]^{\times 2}$;
  \item \textit{regular} Redemeister moves (meaning framed Redemeister moves involving at most a single strand from a single dotted component, and at least another component); 
  \item creation/removal of \textit{canceling pairs} of dotted and undotted components;
   \[
    \pic{1-isotopy_canceling_pair}
   \]
  \item slides of undotted components over other undotted components, following the framing.
   \[
    \pic{1-isotopy_slide_3}
   \]
 \end{itemize}
\end{definition}

\begin{proposition}\label{P:2-qvlnc}
 Let $L, L'$ be regular diagrams of Kirby links. Then $W(L)$ and $W(L')$ are \qvlnt{2} if and only if $L$ and $L'$ are related by a finite sequence of \textit{\dfrmtns{2}}.
\end{proposition}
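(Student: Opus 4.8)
The plan is to match the two lists of moves through the handle/diagram dictionary (dotted components correspond to \hndls{1}, undotted components to \hndls{2}, with a single \hndl{0} fixed) and to reduce the substance to the Kirby calculus for \dmnsnl{4} \hndlbds{2} of \cite{BP11}. Since both $W(L)$ and $W(L')$ are built from a single \hndl{0}, I would first recall, via \cite[Proposition~1.2.4]{BP11}, that any \dfrmtn{2} relating them can be arranged so as to keep a single \hndl{0} throughout. This eliminates \hndl{0} isotopies, \hndl{0}-over-\hndl{0} slides, and $0/1$-canceling pairs from the discussion, leaving only isotopies of the attaching maps of \hndls{1} and \hndls{2}, \hndl{1}-over-\hndl{1} and \hndl{2}-over-\hndl{2} slides, and $1/2$-canceling pairs to account for.

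For the ``if'' direction I would verify that each of the four diagrammatic moves lifts to a \dfrmtn{2}. Planar isotopies and regular Reidemeister moves realize an ambient isotopy of the Kirby link $L \subset S^3$; the restriction to at most a single strand of a single dotted component, together with regularity, guarantees that the carved Seifert disks stay embedded and pairwise disjoint along the isotopy, so that it is induced by an isotopy of the attaching maps of the \hndls{1} and \hndls{2}. The creation or removal of a dotted--undotted canceling pair is by construction a $1/2$-handle cancellation, and a slide of an undotted component over another along its framing is precisely a \hndl{2} slide. As all of these are \dfrmtns{2}, diagrams related by diagrammatic \dfrmtns{2} produce \qvlnt{2} handlebodies.

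For the converse I would run the dictionary backwards, which first requires a Reidemeister-type theorem for regular diagrams: two regular diagrams of ambient-isotopic Kirby links differ by planar isotopies and regular Reidemeister moves, the ``regular'' qualifier being calibrated so that every intermediate diagram stays regular (no self-knotting of a dotted component and no mutual linking of two dotted components, either of which would violate the embedded-disjoint-disk condition). Granting this, isotopies of the attaching maps of the \hndls{1} and \hndls{2} translate into the first two diagrammatic moves, \hndl{2}-over-\hndl{2} slides into the fourth, and $1/2$-cancellations into the third.

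The genuinely delicate point, which I expect to be the main obstacle, is the \hndl{1}-over-\hndl{1} slide: Definition~\ref{D:2-dfrmtn} contains no ``dotted-over-dotted'' move, and such a band sum of two dotted components is explicitly excluded from the regular Reidemeister moves. Here I would invoke the analysis of \cite{BP11}, where, in the carved-disk notation, a \hndl{1} slide is traded for a finite sequence of \hndl{2} slides and $1/2$-cancellations, exploiting the single \hndl{0} and the unknottedness of the dotted circles; this redundancy is exactly what makes the reduced list of Definition~\ref{D:2-dfrmtn} complete. Combined with the single-\hndl{0} reduction, it shows that every \dfrmtn{2} between $W(L)$ and $W(L')$ is realized by a finite sequence of diagrammatic \dfrmtns{2}, which closes the equivalence.
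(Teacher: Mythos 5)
Your proposal is correct and follows essentially the same route as the paper, which simply defers to \cite[Proposition~2.2.8]{BP11} (with all labels trivial) and separately records that \hndl{1}-over-\hndl{1} slides are consequences of the listed moves via \cite[Figures~2.2.8 \& 2.2.11]{BP11}. Your sketch is an expanded version of that same reduction, delegating the two genuinely delicate points --- the Reidemeister calculus for regular diagrams and the elimination of dotted-over-dotted slides --- to exactly the places in \cite{BP11} where they are established.
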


For a proof, see \cite[Proposition~2.2.8]{BP11}, where all labels are equal to $1$ in our case.

Notice that Proposition~\ref{P:2-qvlnc} implies, in particular, that a slide of a \hndl{1} over another \hndl{1} can be implemented by \hndl{1/2} cancellations and slides of \hndls{2} over other \hndls{2}, and the same goes for the slide of a \hndl{2} under a \hndl{1}, see \cite[Figures~2.2.8 \& 2.2.11]{BP11}. Another move that can be deduced from \hndl{1/2} cancellations and slides of \hndls{2} over other \hndls{2} is the creation/removal of chain pairs of \hndls{1/2} based at other \hndls{2}. For regular diagrams, this corresponds to the creation/removal of \textit{chain pairs} of dotted and undotted components based at other undotted components, which is the operation shown here below.
\[
 \pic{1-isotopy_chain}
\]

\subsection{Ribbon surfaces and \texorpdfstring{$1$-isotopy}{1-isotopy}}

We move on to discuss a certain class of surfaces properly embedded inside \dmnsnl{4} \hndlbds{2} that we will call \textit{ribbon surfaces}, and introduce an equivalence relation between them that we will call \textit{\stp{1}}. Recall that an embedding of manifolds $\iota : Y \hookrightarrow X$ is \textit{proper} if $\iota^{-1}(K)$ is compact for every compact subset $K \subset X$. Since we are assuming that $Y$ is compact, this happens if and only if $\iota^{-1}(\partial X) = \partial Y$.

\begin{definition}\label{D:sub-hndlbd}
 A proper embedding $\iota : Y \hookrightarrow X$ of an \dmnsnl{m} \hndlbd{j} $Y$ inside an \dmnsnl{n} \hndlbd{k} $X$ is an \textit{\dmnsnl{m} sub-\hndlbd{j}}, or simply a \textit{sub-handlebody}, and it is \textit{fil\-tra\-tion-com\-pat\-i\-ble} if it restricts to a proper embedding $\iota^i : Y^i \hookrightarrow X^{i+1}$ for every index $0 \leqs i \leqs k$. 
\end{definition}

We will sometimes omit the embedding $\iota$, and simply write $Y \subset X$, when the situation allows it.

The structure of a fil\-tra\-tion-com\-pat\-i\-ble \dmnsnl{m} sub-\hndlbd{j} on a sub-\mnfld{m} $Y$ of an \mnfld{n} $X$ is equivalent, up to isotopy, to the choice of a self-indexing Morse function $h : X \to [0,k+\varepsilon]$ with $0 < \varepsilon < \frac{1}{2}$ such that $Y$ is disjoint from the critical points of $h$, and whose restriction $g : Y \to [\frac{1}{2},k+\varepsilon]$ is a Morse function such that $g - \frac{1}{2}$ is self-indexing. In particular, all critical points of $h$ of index $i$ are located inside $h^{-1}(i)$, while all critical points of $g$ of index $i$ are located inside $g^{-1}(i+\frac{1}{2})$, so that $X^i = h^{-1}([0,i+\varepsilon])$, while $Y^i = g^{-1}([\frac{1}{2},i+1+\varepsilon])$.

When $n = 4$ and $m = 2$, we have the following result.

\begin{theorem}\label{T:HKM1}
 Up to isotopy, every closed surface embedded inside a closed \mnfld{4} admits the structure of a fil\-tra\-tion-com\-pat\-i\-ble \dmnsnl{2} sub-\hndlbd{2} of a \dmnsnl{4} \hndlbd{4}, and every surface with non-empty boundary properly embedded inside a \mnfld{4} with non-empty boundary admits the structure of a fil\-tra\-tion-com\-pat\-i\-ble \dmnsnl{2} sub-\hndlbd{2} of a \dmnsnl{4} \hndlbd{3}.
\end{theorem}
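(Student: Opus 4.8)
The plan is to deduce the statement from the Morse-theoretic reformulation recorded immediately before it, which identifies a filtration-compatible \dmnsnl{2} sub-\hndlbd{2} structure on $\varSigma \subset W$ (up to isotopy) with the choice of a self-indexing Morse function $h : W \to [0,k+\varepsilon]$ such that $\varSigma$ is disjoint from the critical set $\mathrm{crit}(h)$ and such that $g := h|_\varSigma$ is a Morse function with $g - \tfrac{1}{2}$ self-indexing. Here $k = 4$ in the closed case and $k = 3$ in the case with non-empty boundary. Thus it suffices to construct such a pair $(h,g)$: the half-integer offset --- critical points of $h$ of index $i$ lying at level $i$ and those of $g$ of index $i$ lying at level $i+\tfrac{1}{2}$, with $0 < \varepsilon < \tfrac{1}{2}$ --- is exactly what forces $Y^i = g^{-1}([\tfrac{1}{2},i+1+\varepsilon])$ to sit inside $X^{i+1} = h^{-1}([0,i+1+\varepsilon])$, that is, the filtration-compatibility $\iota^i : Y^i \hookrightarrow X^{i+1}$.

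First I would fix a self-indexing Morse function $h$ on the ambient \mnfld{4} $W$, ignoring $\varSigma$ for the moment. In the closed case this realizes $W$ as a \dmnsnl{4} \hndlbd{4}; in the case with non-empty boundary I would instead choose $h$ with $\partial W$ as its top regular level set and no interior maxima, using the standard fact that a compact \mnfld{4} with boundary admits a handle decomposition without \hndls{4}, so that $W$ becomes a \dmnsnl{4} \hndlbd{3}. Next I would isotope $\varSigma$ into general position with respect to $h$. Since $\mathrm{crit}(h)$ is finite and $\varSigma$ is $2$-dimensional, general position in the \dmnsnl{4} $W$ makes $\varSigma$ disjoint from $\mathrm{crit}(h)$, and a further small perturbation makes $g = h|_\varSigma$ a Morse function whose critical points lie at pairwise distinct levels, none of them integers. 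In the boundary case I would additionally arrange that $\partial \varSigma \subset \partial W$ lies in the top level of $g$ and that $g$ has no interior maxima; then $g$ has critical points of index $0$ and $1$ only, exhibiting $\varSigma$ as a sub-\hndlbd{2} with no \hndls{2}.

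It then remains to upgrade this raw adapted position to the offset self-indexing form. Here I would invoke the rearrangement theorem of Morse theory (see \cite{GS99}) to normalize the critical levels of $g$: using the ambient descending and ascending gradient flow of $h$ to guide vertical motion, I would realize the abstract self-indexing rearrangement of the height function $g$ by an ambient isotopy of $\varSigma$ that slides each critical point of index $i$ to the level $i+\tfrac{1}{2}$. The point of the half-integer offset is that each target level lies strictly between two consecutive critical levels $i$ and $i+1$ of $h$ (as $\varepsilon < \tfrac{1}{2}$), so there is always room to position the surface handles without crossing an ambient critical level, and the isotopy can be taken supported away from $\mathrm{crit}(h)$, preserving disjointness. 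Since the original $h$-critical levels are left untouched, the two families of critical points are re-levelled into the interleaved pattern $0 < \tfrac{1}{2} < 1 < \tfrac{3}{2} < 2 < \dots$ demanded by the reformulation.

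The hard part will be carrying out this last isotopy while keeping $\varSigma$ \emph{embedded}. When two critical points of $g$ are slid past one another in the $h$-direction, the intervening sheets of $\varSigma$ are at risk of colliding, so the main obstacle is to run Milnor's rearrangement in pairwise disjoint neighborhoods of the ascending and descending membranes of the surface's critical points, checking that these neighborhoods can be chosen thin enough that the vertical sliding never creates self-intersections and never pushes $\varSigma$ onto $\mathrm{crit}(h)$. Once embeddedness is verified throughout, the resulting $h$ and $g = h|_\varSigma$ satisfy all the hypotheses of the reformulation, and reading it backwards produces the desired filtration-compatible \dmnsnl{2} sub-\hndlbd{2} structure; the index count $k=4$ versus $k=3$ distinguishes the closed case from the one with boundary, recovering the statement of \cite{HKM18}.
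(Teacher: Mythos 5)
You have correctly reduced the theorem to the Morse-theoretic reformulation recorded just before it, and your plan (generic position, then an ambient rearrangement of the critical values of $g=h|_\varSigma$ into the interleaved pattern $0<\tfrac12<1<\tfrac32<\dots$) is the right one. Be aware, however, that the paper does not prove this statement at all: it attributes it to Hughes--Kim--Miller and derives it from \cite[Lemma~4.9 \& Definition~4.15]{HKM18}, so what you are writing is an unpacking of the argument behind that citation — your ``vertical sliding'' is precisely the $h$-regular, $h$-disjoint vertical isotopies of \cite[Definition~2.3]{HKM18}, and the normalization of critical levels is their Lemmas~4.6 and~4.9. Two smaller remarks: in the boundary case your elimination of the interior maxima of $g$ is unnecessary (a sub-$2$-handlebody is allowed to have $2$-handles, which sit at level $\tfrac52<3$ and cause no failure of filtration-compatibility) and would itself require an ambient cancellation argument you have not supplied; and the statement only asks for the structure \emph{up to isotopy}, so you are free to modify the embedding throughout, as you do.

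The step that remains genuinely unjustified is the one you flag as hard, and your proposed mechanism does not work as literally described. You cannot use the ambient gradient flow of $h$ to transport a $2$-dimensional neighborhood $D\subset\varSigma$ of a critical point of $g$ past an ambient critical level: however small $D$ is, its track under the flow is $3$-dimensional and generically meets both the ascending manifolds of the index-$1$ (and descending manifolds of the index-$3$) critical points of $h$, which are $3$-dimensional ($2+3>4$), and the rest of the surface ($3+2>4$); the first collision stops the descent, and the second produces double points at isolated times, changing the isotopy class. The dimension counts only close if the \emph{guiding object is $1$-dimensional}: after perturbing $\varSigma$ so that the finitely many critical points of $g$ lie off the separatrices of $h$, the flow line through each such point is an embedded arc, monotone for $h$, that generically avoids $\mathrm{crit}(h)$ and the rest of $\varSigma$ ($1+0<4$ and $1+2<4$); one then drags a sufficiently small disk of $\varSigma$ along this arc, replacing it by a thin vertical tube capped off at the target level, which creates no new critical points of $g$ and keeps $\varSigma$ embedded. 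If your ``thin neighborhoods of the ascending and descending membranes'' is meant to denote thin tubular neighborhoods of these flow arcs, the argument is correct, but you must say so and verify verticality of the tube; as written, ``guiding the neighborhood by the gradient flow'' is exactly the move that general position forbids.
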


The proof of Theorem~\ref{T:HKM1} is due to Hughes, Kim, and Miller, and it is based on the notion of banded unlinks, which we recall in Section~\ref{T:HKM1}.

\begin{definition}\label{D:ribbon_surface}
 An (\textit{oriented}) \textit{ribbon surface} is an (oriented) \dmnsnl{2} sub-\hndlbd{1} $\varSigma$ of a \dmnsnl{4} \hndlbd{2} $W$.
\end{definition} 

In other words, an (oriented) ribbon surface $\varSigma \subset W$ is given by a finite disjoint union of (oriented) \dmnsnl{2} \hndls{0}, or disks, with a finite number of (oriented) \dmnsnl{2} \hndls{1}, or bands, attached to them (via orientation-compatible attaching maps). Notice that, in the case where $W$ has no \hndls{1} or \hndls{2}, we recover the notion of a ribbon surface in $D^4$ in the usual sense.

\begin{definition}\label{D:(k,j)-dfrmtn}
 A \textit{\hndlstp} of an \dmnsnl{m} sub-\hndlbd{j} $Y$ of an \dmnsnl{n} \hndlbd{k} $X$ is a sequence of:
 \begin{itemize}
  \item isotopies of $Y$ into $X$ preserving handle attaching maps;
  \item embedded \hndlstps{} of $Y$;
  \item ambient \hndlstps{} of $X$.
 \end{itemize}
 Two \dmnsnl{m} sub-\hndlbds{j} of \dmnsnl{n} \hndlbds{k} are said to be \textit{\hndlstpc} if they are related by a \hndlstp. Similarly, a \textit{\dfrmtn{(c,d)}} of an \dmnsnl{m} sub-\hndlbd{j} $Y$ of an \dmnsnl{n} \hndlbd{k} $X$ is a sequence of:
 \begin{itemize}
  \item isotopies of $Y$ into $X$ preserving handle attaching maps;
  \item embedded $c$-deformations of $Y$;
  \item ambient $d$-deformations of $X$.
 \end{itemize}
 Two \dmnsnl{m} sub-\hndlbds{j} of \dmnsnl{n} \hndlbds{k} are said to be \textit{\qvlnt{(c,d)}} if they are related by a \dfrmtn{(c,d)}.
\end{definition}

Notice that, if two \dmnsnl{m} sub-\hndlbds{j} of \dmnsnl{n} \hndlbds{k} are \qvlnt{(j,k)}, then they can be deformed into one another without ever stepping outside of the class of \dmnsnl{m} sub-\hndlbds{j} of \dmnsnl{n} \hndlbds{k}. The following result is due to Hughes, Kim, and Miller, see \cite[Theorem~4.3]{HKM18}.

\begin{theorem}\label{T:HKM2}
 Two fil\-tra\-tion-com\-pat\-i\-ble \dmnsnl{2} sub-\hndlbds{2} of \dmnsnl{4} \hndlbds{4} $\varSigma \subset W$ and $\varSigma' \subset W'$ are diffeomorphic if and only if they are \qvlnt{(2,4)}, and two fil\-tra\-tion-com\-pat\-i\-ble \dmnsnl{2} sub-\hndlbds{2} of \dmnsnl{4} \hndlbds{3} $\varSigma \subset W$ and $\varSigma' \subset W'$ are diffeomorphic if and only if they are \qvlnt{(2,3)}.
\end{theorem}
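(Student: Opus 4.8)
The plan is to prove a relative version of Proposition~\ref{P:diff_iff_n-1-equiv} by adapting the Cerf-theoretic argument of Hughes--Kim--Miller to the pair $(X,Y)$, treating both statements uniformly with $(j,k) = (2,4)$ in the closed case and $(j,k) = (2,3)$ in the boundary case. The ``only if'' direction is routine: inspecting Definition~\ref{D:(k,j)-dfrmtn}, each elementary move of a \dfrmtn{(2,k)} either leaves the smooth pair $(X,Y)$ unchanged and merely alters its handle decomposition---this covers handle creation, cancellation, and sliding, in both the ambient and the embedded flavor---or is an isotopy of $Y$ in $X$. In every case the resulting pair is diffeomorphic to the original, so \qvlnt{(2,k)} pairs are diffeomorphic.

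For the converse, I would transport one structure along a diffeomorphism $(W,\varSigma) \to (W',\varSigma')$, reducing to a single pair $(X,Y)$ with $Y$ fixed and carrying two filtration-compatible structures. By the Morse reformulation recorded before Theorem~\ref{T:HKM1}, these are self-indexing Morse functions $h_0, h_1 : X \to [0,k+\varepsilon]$ with $Y$ disjoint from their critical loci and with $g_\ell = h_\ell|_Y$ Morse and $g_\ell - \tfrac12$ self-indexing. I would join $h_0$ to $h_1$ through a generic family $h_t$ keeping $Y$ disjoint from $\operatorname{Crit}(h_t)$ throughout---possible generically, since the critical locus sweeps out a $1$-dimensional set in the $5$-dimensional total space $X \times [0,1]$ while $Y \times [0,1]$ is only $3$-dimensional---and set $g_t = h_t|_Y$. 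Both kinds of handle data then vary through this single family: the critical points of $h_t$ control the handle structure of $X$, whereas the critical points of $g_t$---the points where a level set of $h_t$ becomes tangent to $Y$---control that of $\varSigma$. Parametrized Cerf theory reduces, after a further generic perturbation, the singularities of the path to isolated births and deaths of canceling pairs (of $h_t$ or of $g_t$) and transverse crossings of two critical values; these are exactly the moves of a \dfrmtn{(2,k)}. Since $\dim Y = 2$, every critical point of $g_t$ has index at most $2 = j$, so the embedded part is automatically a \dfrmtn{2}, while the top-dimensional hypothesis on $X$ accommodates the full range of ambient indices.

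The crux, and the step I expect to demand the most care, is keeping each intermediate datum $(h_t,g_t)$ a genuine filtration-compatible structure---that is, keeping $h_t$ self-indexing and $g_t - \tfrac12$ self-indexing along the whole path---so that every Cerf event is a legal move rather than an excursion outside the category of sub-handlebodies. Concretely, $Y^i \subset X^{i+1}$ forces the ambient index-$i$ values, the embedded index-$i$ values, and the ambient index-$(i+1)$ values to stay interleaved in this order, and one must normalize the generic path to respect this ordering. I would do so by a relative form of Morse's rearrangement lemma, pushing births and deaths away from the self-indexing levels and arranging the crossings to occur in the permitted order; in the boundary case $(2,3)$ this must additionally be performed relative to $\partial X$, which is arranged by requiring all functions to be product-like near the boundary. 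Translating these events into the combinatorics of banded unlink diagrams finally recovers the band moves of \cite{HKM18}, but the abstract equivalence sketched here is what the statement asserts.
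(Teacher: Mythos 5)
The paper does not actually prove this statement: it is quoted from Hughes--Kim--Miller and attributed to \cite[Theorem~4.3]{HKM18}, so there is no internal argument to compare yours against. That said, your strategy --- the easy direction by inspecting Definition~\ref{D:(k,j)-dfrmtn}, and the converse by transporting one structure along the diffeomorphism and running a relative, parametrized Cerf argument on a generic path of pairs $(h_t, g_t = h_t|_Y)$ --- is the correct one and is essentially the strategy of \cite{HKM18}. Your general-position count (the critical locus of the path is $1$-dimensional in the $5$-dimensional $X \times [0,1]$ and so generically misses the $3$-dimensional $Y \times [0,1]$) is right, as is the observation that $\dim Y = 2$ caps the embedded Morse indices at $j = 2$ so the embedded part of the deformation is automatically within bounds.

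The gap is that the step you yourself flag as the crux is asserted rather than carried out, and it is not a formality. Keeping $h_t$ and $g_t - \tfrac{1}{2}$ simultaneously self-indexing --- i.e.\ keeping the critical values interleaved as $0 < \tfrac{1}{2} < 1 < \tfrac{3}{2} < \cdots$ --- is not delivered by genericity plus the ordinary rearrangement lemma: a generic path a priori contains events where an embedded critical value of $g_t$ crosses an ambient critical value of $h_t$, and such an event is not one of the moves of Definition~\ref{D:(k,j)-dfrmtn}; it breaks the condition $Y^i \subset X^{i+1}$ outright. One must show these crossings can be avoided or traded for compositions of allowed moves, and this needs genuine geometric input (for instance, that minima of the surface can always be pushed below the ambient $1$-handle level and saddles below the ambient $2$-handle level, which rests on disjointness of specific ascending and descending manifolds in these dimensions, not on transversality of strata alone). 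That is precisely the content of the lemmas this paper invokes elsewhere, namely \cite[Lemmas~4.6, 4.9 \& 4.11]{HKM18} on generic position, minima below saddles, and horizontal--vertical position, which occupy most of Section~4 of that paper. Your ``relative form of Morse's rearrangement lemma'' is therefore a placeholder for the theorem being proved; as written the proposal is an accurate roadmap to the argument in the literature rather than a proof. (A minor further point: in the $(2,3)$ case the Morse functions are not product-like near $\partial X$ in the sense of being constant there --- $\partial X$ is the top regular level, $\partial Y = Y \cap \partial X$, and the paper's isotopies are not required to fix the boundary.)
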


\begin{definition}\label{D:1-stp_rib_surf}
 A \textit{\stp{1}} is a \dfrmtn{(1,2)} between ribbon surfaces.
\end{definition}

We adopt the term \stp{1} in order to extend the terminology introduced in \cite[Deﬁnition~1.3.5]{BP11} for ribbon surfaces in $D^4$, but the reader should keep in mind that, with our conventions, \hndlstps{} and \stps{1} are not just isotopies of a surface, they also include deformations of the ambient handlebody.

\begin{conjecture}\label{C:1-stp}
 There exist ribbon surfaces that are diffeomorphic but not \stpc{1}.
\end{conjecture}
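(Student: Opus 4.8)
The plan is to treat Conjecture~\ref{C:1-stp} as an existence statement and to establish it by exhibiting a single pair of ribbon surfaces that are diffeomorphic but whose values under one of the \stp{1} invariants constructed in this paper disagree. Concretely, I would fix a non-semisimple unimodular ribbon category $\calC$ together with the data entering Theorem~\ref{T:ribbon_surface_invariant_modified_trace_intro} (a Bobtcheva--Messia element $w$, a compatible Hennings form $\varphi$, a modified trace $\rmt$ on an ideal $\calI \subset \calC$, and a banded unlink module $\calV \in \calI$ over a $\varphi$-compatible Frobenius algebra $\calF$), and search for ribbon surfaces $\varSigma, \varSigma' \subset W$ in a fixed \dmnsnl{4} \hndlbd{2} $W$ (ideally $D^4$) that are related by an ambient diffeomorphism but satisfy $J'_\calC(W,\varSigma) \neq J'_\calC(W,\varSigma')$. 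Since $J'_\calC$ is a \stp{1} invariant by Theorem~\ref{T:ribbon_surface_invariant_modified_trace_intro}, such an inequality immediately certifies that $\varSigma$ and $\varSigma'$ are not \stpc{1}, proving the conjecture. The same scheme applies verbatim with $J_\calC$ in place of $J'_\calC$ (Theorem~\ref{T:ribbon_surface_invariant_quantum_character_intro}).

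Before attempting the hard version, I would record a soft observation that reduces the \emph{variable-ambient} form of the conjecture to Gompf's Conjecture~\ref{C:2-qvlnc}. A \stp{1} is a \dfrmtn{(1,2)}, whose ambient component is an honest \dfrmtn{2}; hence if $\varSigma \subset W$ and $\varSigma' \subset W'$ are \stpc{1}, then $W$ and $W'$ are necessarily \qvlnt{2}. Thus, starting from any pair of diffeomorphic but not \qvlnt{2} handlebodies $W \cong W'$ predicted by Conjecture~\ref{C:2-qvlnc} and placing a standard unknotted ribbon disk in a boundary collar of each, one obtains diffeomorphic pairs that cannot be \stpc{1}. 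This shows Conjecture~\ref{C:1-stp} would follow from Conjecture~\ref{C:2-qvlnc}, but since the latter is itself open, the route through the quantum invariants with a \emph{fixed} ambient is the one that could settle the genuinely surface-theoretic content without first resolving Gompf.

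For candidate surfaces I would draw on the exotic phenomena already discussed in the paper: the ribbon disks attached to the framed knots inside an Akbulut cork treated in Section~\ref{S:Akbulut}, and, more speculatively, ribbon disks associated with potential Andrews--Curtis counterexamples in a homotopy ball. Each candidate is presented by a banded unlink $U \cup B$, so the computation is entirely combinatorial: build the $\calC$-labeled Kirby graph $L_{w,\varphi} \cup T_\calV(G_{\calV,\calF}(U,B))$, evaluate the Kerler--Lyubashenko--Reshetikhin--Turaev functor $F_\calC$ on it, and close up with the modified trace $\rmt_\calV$. For the algebraic input I would take $\calC$ to be modules over a small quantum group such as $\overline{U}_q \fsl_2$ at a root of unity, as in Section~\ref{S:quantum_groups}, since these are unimodular, non-semisimple, and known to carry nontrivial modified traces and central elements; the non-semisimplicity is essential here, as semisimple categories are insensitive to exotica.

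The main obstacle is that there is at present no guarantee these invariants are sensitive enough: it is not known whether $J'_\calC$ (or $J_\calC$) can separate \stp{1} from diffeomorphism, and a computation yielding equal values on a diffeomorphic pair would leave the conjecture untouched. The difficulty is therefore twofold. First, one must choose a diffeomorphic pair that is \emph{expected} to be genuinely non-isotopic, which forces a reliance on the still-conjectural exotic examples above. Second, the evaluation of $F_\calC$ on banded unlink diagrams with many crossings and handle components is technically heavy, and the relevant structure constants in a non-semisimple category are delicate. The crux of the whole program is thus to exhibit even one diffeomorphic pair on which the invariant provably takes distinct values; achieving this would simultaneously prove Conjecture~\ref{C:1-stp} and demonstrate that the invariants of this paper see strictly beyond diffeomorphism.
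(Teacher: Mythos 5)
The statement you are addressing is labeled as a \emph{conjecture} in the paper, and the paper offers no proof of it: the authors explicitly state that it is not known whether diffeomorphism and \stp{1} define different equivalence relations on ribbon surfaces. Your proposal likewise does not prove it. What you describe is a strategy --- fix a non-semisimple $\calC$, compute $J'_\calC$ on a diffeomorphic pair of ribbon surfaces, and hope the values differ --- but the decisive step, actually exhibiting a pair $(W,\varSigma)$, $(W,\varSigma')$ with $J'_\calC(W,\varSigma) \neq J'_\calC(W,\varSigma')$, is never carried out, and you concede as much in your final paragraph. An existence statement is not established by a search procedure that may return nothing. Worse, the specific candidates you name are precisely the ones the paper already tests and reports negative results on: Section~\ref{S:Akbulut} states that no Hennings forms, Bobtcheva--Messia elements, or banded unlink modules have been found that separate the morphisms of Equations~\eqref{E:Akbulut_cork_morphisms} and \eqref{E:Hayden-Sundberg_discs_morphisms}, and Section~\ref{S:quantum_groups} adds that for $\fraku_q\fsl_2$ with $r \leqs 16$ numerical computation cannot distinguish the Akbulut cork morphisms for any label $V$. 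So the program you outline is the one the authors attempted, so far without success.

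Your ``soft observation'' --- that a \stp{1} between $\varSigma \subset W$ and $\varSigma' \subset W'$ forces $W$ and $W'$ to be \qvlnt{2}, so that Conjecture~\ref{C:2-qvlnc} would imply Conjecture~\ref{C:1-stp} by decorating a diffeomorphic but non-\qvlnt{2} pair with standard disks --- is a correct and worthwhile logical reduction, but it only trades one open conjecture for another; Gompf's conjecture is explicitly open in the paper. In short: there is no proof here to compare against the paper's (the paper has none), and your attempt does not close the gap, because the single fact everything rests on --- a provably distinguishing computation --- is missing and is known to fail for the examples you propose.
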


\subsection{Banded unlinks and their diagrams}\label{S:banded_unlink_diagrams}

In order to give a more combinatorial presentation of ribbon surfaces, we will use the diagrammatic description developed in \cite{HKM18}.

\begin{definition}\label{L:banded_unlink}
 A \textit{banded unlink} $L \cup U \cup B \subset [0,1]^{\times 3} \subset S^3$ is the union of a Kirby link $L$, an (unframed unoriented) unlink $U$, and a collection of (framed unoriented) arcs (or \textit{bands}) $B$ satisfying the following conditions:
 \begin{itemize}
  \item the union of $L_1$ and $U$ is again an unlink;
  \item the interior of $B$ is disjoint from $L \cup U$, and its endpoints lie on $U$.
 \end{itemize}
 A banded unlink $L \cup U \cup B$ is \textit{oriented} if the unlink $U$ is oriented. If a banded unlink $L \cup U \cup B$ is oriented, then we require that, at every point of $U \cap B$, the triple $(x,y,z)$ is a positive basis of the tangent space of $S^3$, where $x$ is the outgoing tangent vector of $B$, where $y$ is the tangent vector of $U$ specified by the orientation, and where $z$ is the framing of $B$.
\end{definition}

If $L \cup U \cup B$ is a banded unlink, then the Kirby link $L$ will be drawn in purple and green, the unlink $U$ will be drawn in blue, while the collection of bands $B$ will be drawn in red. Notice that, in \cite[Section~3]{HKM18}, the blue unlink $U$ is drawn in black, and an additional property is required of $U \cup B$ in order to represent closed surfaces. In our setting, we consider a more general flavor of banded unlinks, since we will mainly focus on ribbon surfaces.

\begin{definition}
 A \textit{diagram} of a (oriented) banded unlink $L \cup U \cup B \subset [0,1]^{\times 3}$ is a generic projection of $L \cup U \cup B$ to $[0,1]^{\times 2}$ without tangency points or triple points, equipped with a crossing state for every double point, and mapping the framing of $L_2$ and $B$ to the blackboard framing. A diagram of a banded unlink is \textit{regular} if the image of the dotted unlink $L_1$ bounds a set of Seifert disks embedded in $[0,1]^{\times 2}$ that do not intersect the image of the unlink $U$ (although they are still allowed to intersect the image of the undotted link $L_2$ and of the bands $B$), and if the blackboard framing on $U$ is the $0$-framing.
\end{definition}

By abuse of notation, a regular diagram of a banded unlink $L \cup U \cup B \subset [0,1]^{\times 3}$ will also be denoted by $L \cup U \cup B \subset [0,1]^{\times 2}$. 

Components of $U$ represent \dmnsnl{2} \hndls{0}, while bands in $B$ represent \dmnsnl{2} \hndls{1}. Indeed, it is possible to construct a fil\-tra\-tion-com\-pat\-i\-ble ribbon surface $\varSigma(U,B) \subset W(L)$ starting from a regular diagram of a banded unlink $L \cup U \cup B$ in the following way:
\begin{itemize}
 \item $W(L)$ is the \dmnsnl{4} \hndlbd{2} determined by the Kirby link $L$;
 \item $\varSigma(U,B)^0 \subset W(L)^1$ is given by a finite set of properly embedded Seifert disks for the oriented unlink $U$, one for each component, so that $\partial \varSigma(U,B)^0$ is given by $U \subset \partial W(L)^1$;
 \item $\varSigma(U,B)^1 \subset W(L)^2$ is obtained from $\varSigma(U,B)^0$ by attaching a finite collection of \dmnsnl{2} \hndls{1} as prescribed by the bands $B$, so that $\partial \varSigma(U,B)^1$ is given by the link $U(B)$ obtained from $U$ by band surgery along $B$.
\end{itemize}
If the boundary of $\varSigma(U,B)$ is diffeomorphic to $(S^1)^{\sqcup m}$ for some integer $m \geqs 0$, then there exists a unique way, up to isotopy, of gluing $m$ copies of a \dmnsnl{2} \hndl{2} to $\varSigma(U,B)$ to obtain a closed filtration-compatible \dmnsnl{2} sub-\hndlbd{2} $\hat{\varSigma}(U,B)$.

For example, Seifert's algorithm associates with every oriented diagram of a knot $K \subset [0,1]^{\times 3} \subset S^3$ a ribbon surface $\varSigma \subset S^3 \subset D^4$, and a banded unlink diagram representing $\varSigma$ is obtained from the knot diagram of $K$ as follows.
\[
 \pic{Seifert_surface}
\]
Let us point out a subtlety. On one hand, the following is a banded unlink representing a copy of $D^2$ properly embedded into $D^4$.
\[
 \pic{example_1}
\]
On the other hand, the following is not a banded unlink, since $L_1 \cup U$ is a Hopf link, not an unlink.
\[
 \pic{example_2}
\]
Indeed, although this second picture can still be understood as representing a copy of $D^2$ properly embedded into $D^4$, this embedding is not filtration-compatible.

Notice that, since for every closed \mnfld{4} $W$ there exists a Kirby link $L$ such that $W$ is diffeomorphic to $\hat{W}(L)$, the first part of Theorem~\ref{T:HKM1} follows directly from \cite[Lemma~4.9 \& Definition~4.15]{HKM18}, which guarantee the existence of a banded unlink $U \cup B$ such that $\hat{\varSigma}(U,B)$ is isotopic to $\varSigma$. The same proof also works when $W$ and $\varSigma$ have boundary. More precisely, we have the following result.

\begin{proposition}\label{P:banded_unlink_diagram}
 For every 
% (framed, 
 (oriented) ribbon surface $\varSigma \subset W$ there exists a regular diagram of a 
% (framed, 
 (oriented) banded unlink $L \cup U \cup B$ such that $\varSigma \subset W$ and $\varSigma(U,B) \subset W(L)$ are \hndlstpc.
\end{proposition}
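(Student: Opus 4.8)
The plan is to read a banded unlink directly off a handle decomposition of the pair $\varSigma \subset W$, and to reduce the single nontrivial positioning step to the work of Hughes, Kim, and Miller. First I would fix the combinatorial data. By the Morse-theoretic reformulation of filtration-compatibility recalled just before Theorem~\ref{T:HKM1}, the ribbon surface $\varSigma \subset W$ is encoded, up to \hndlstp, by a self-indexing Morse function $h : W \to [0,2+\varepsilon]$ having $\varSigma$ disjoint from its critical points, together with its restriction $g = h|_\varSigma$, for which $g - \frac{1}{2}$ is self-indexing. Since $\varSigma$ is a \dmnsnl{2} sub-\hndlbd{1}, the function $g$ has critical points only of index $0$ and $1$, located at the levels $h = \frac{1}{2}$ and $h = \frac{3}{2}$; in particular the minima of $g$ lie below the ambient \hndls{1} (whose critical points are at $h = 1$), while the \hndls{1} of $\varSigma$ lie between the ambient \hndls{1} and \hndls{2} (at $h = 2$). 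Reading $h$ off in the usual way gives the Kirby link $L = L_1 \cup L_2$ presenting $W$ as in Section~\ref{S:Kirby_links_diagrams}, with $L_1$ the dotted circles recording the ambient \hndls{1} and $L_2$ the attaching circles of the ambient \hndls{2}.

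Next I would extract the unlink and the bands. For a regular value $t_0 \in (\frac{1}{2},1)$, the level set $h^{-1}(t_0)$ is a copy of $S^3$, the boundary of the \hndl{0} of $W$, and $\varSigma$ meets it in a link whose components bound the minimum-disks of $g$ (the \dmnsnl{2} \hndls{0} of $\varSigma$) pushed into $h^{-1}([0,t_0]) \cong D^4$; the index-$1$ critical points of $g$ give a system of bands, and since $\varSigma$ has no critical points of index $2$ its boundary is recovered from this link by band surgery, with no maxima to cap. It remains to present this configuration, together with $L$, as a genuine banded unlink in $S^3$: using the dotted-circle (carving) description of the ambient \hndls{1} to bring the bands down past the level $h = 1$, and then appealing to \cite[Lemma~4.9 \& Definition~4.15]{HKM18}, I would isotope it into banded-unlink position, producing an unlink $U$ and bands $B$ with $L_1 \cup U$ an unlink, the Seifert disks of $L_1$ disjoint from $U$, and the blackboard framing on $U$ equal to the $0$-framing. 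By construction and the gradient flow of $h$, the reconstructed pair $\varSigma(U,B) \subset W(L)$ of Section~\ref{S:banded_unlink_diagrams} is \hndlstpc{} to $\varSigma \subset W$; for the oriented statement I would orient $U$ as $\partial\varSigma^0$ and check that the convention at $U \cap B$ in Definition~\ref{L:banded_unlink} matches the orientation of $\varSigma$.

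The hard part is this final positioning step. Extracting the minima, the bands, and the Kirby link from $h$ and $g$ is essentially tautological, whereas arranging $L_1 \cup U$ to be an unlink with Seifert disks disjoint from $U$ is exactly the content of \cite[Lemma~4.9]{HKM18}, organized as in \cite[Definition~4.15]{HKM18}. Since those results are stated for closed surfaces in closed \mnflds{4}, the point to check is that they transfer to the bounded setting of \dmnsnl{4} \hndlbds{2}: all of the moves involved are supported in neighborhoods of the handles of $W$ and of the critical points of $g$, hence are disjoint from $\partial W$ and $\partial\varSigma$, so the argument applies verbatim. The ribbon hypothesis only simplifies matters, as $\varSigma$ being a \dmnsnl{2} sub-\hndlbd{1} means there are no capping \dmnsnl{2} \hndls{2}, no maxima to present, and $U(B) = \partial\varSigma$ needs no further closing—this is precisely why an open banded unlink $L \cup U \cup B$, rather than a closed one, suffices.
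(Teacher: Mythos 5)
Your proposal is correct and follows essentially the same route as the paper: put $\varSigma$ in generic position with respect to a self-indexing Morse function $h$ on $W$, read the Kirby link and the minima/saddles off $h$ and $g=h|_\varSigma$, and then invoke the positioning algorithms of Hughes--Kim--Miller, checking that the moves are \hndlstps{} and transfer to the bounded setting. The only difference is one of granularity: where you cite \cite[Lemma~4.9 \& Definition~4.15]{HKM18} wholesale, the paper splits the positioning into three steps (horizontal-vertical position via the proof of Lemma~4.9, minima below saddles via Lemma~4.6, banded unlink position via Proposition~4.4) and notes for each that the $h$-regular and $h$-disjoint horizontal and vertical isotopies involved are \hndlstps.
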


\begin{proof}
 Up to \hndlstps, we can find a self-indexing Morse function $h$ on $W$ whose restriction $g$ to $\varSigma$ is also a Morse function, and such that all critical points of $g$ lie at distinct levels. In other words, up to \hndlstps, we can assume that $\varSigma$ is generic with respect to a self-indexing Morse function $h$ on $W$, in the terminology of \cite[Definition~4.8]{HKM18}. Notice that, in particular, we can assume that every \hndl{k} of $\varSigma$ contains exactly a critical point of index $k$ of $g$. Then, the proof of \cite[Lemma~4.9]{HKM18} presents an algorithm for bringing $\varSigma$ into horizontal-vertical position, as in \cite[Definition~2.4]{HKM18}, through a finite sequence of $h$-regular and $h$-disjoint horizontal and vertical isotopies, as in \cite[Definition~2.3]{HKM18}, that are \hndlstps. Furthermore, the proof of \cite[Lemma~4.6]{HKM18} presents an algorithm that ensures that $\varSigma$ has minima below saddles through a finite sequence of $h$-regular and $h$-disjoint horizontal and vertical isotopies that are \hndlstps. Finally, the proof of \cite[Proposition~4.4]{HKM18} presents an algorithm for bringing $\varSigma$ into banded unlink position, as in \cite[Definition~3.1]{HKM18}, through a finite sequence of $h$-regular and $h$-disjoint horizontal and vertical isotopies that are \hndlstps. \qedhere 
\end{proof}

We can translate \stp{1} of (oriented) ribbon surfaces into an equivalence relation between regular diagrams of (oriented) banded unlinks, following \cite{HKM18}.

\begin{definition}\label{D:1-stp_band_unlink}
 A \textit{band move} of a regular diagram of a (oriented) banded unlink $L \cup U \cup B$ is any of the operations appearing in the following list:
 \begin{itemize}
  \item isotopies in $[0,1]^{\times 2}$;
  \item \textit{regular} Redemeister moves (meaning framed Redemeister moves involving at most a single strand from a single dotted component, and at least another component);
  \item travels of intersections between unknots and bands through crossings (black strands can take any color);
   \[
    \pic{1-isotopy_naturality_1} \qquad \qquad \pic{1-isotopy_naturality_2}
   \] 
  \item creation/removal of \textit{cup pairs} of unknots and bands (known as \textit{cup moves});
   \[
    \pic{1-isotopy_cup}
   \] 
  \item creation/removal of \textit{canceling pairs} of dotted and undotted components;
   \[
    \pic{1-isotopy_canceling_pair}
   \] 
  \item slides of bands over other bands;
   \[
    \pic{1-isotopy_slide_1}
   \] 
  \item slides of bands and undotted components over other undotted components, following the framing;
   \[
    \pic{1-isotopy_slide_2} \qquad \qquad \pic{1-isotopy_slide_3}
   \] 
  \item swims of bands and undotted components through other bands.
   \[
    \pic{1-isotopy_swim_1} \qquad \qquad \pic{1-isotopy_swim_2}
   \] 
 \end{itemize}
\end{definition}

Notice that \cite[Figure~4]{HKM18} also includes two additional types of moves, namely \textit{cap moves} and slides of unlink components and bands under dotted components. On one hand, the first kind of move is deliberately omitted here because we focus on \stps{1} instead of isotopies, and the cap move is precisely what sets the two apart. In particular, we use the term \textit{band move} in a more restrictive sense with respect to Hughes, Kim, and Miller. On the other hand, slides of unlink components and bands under dotted components are a consequence of the remaining moves, as shown in the example here below.
 \[
  \pic{1-isotopy_slide_4}
 \]
 For the same reason, slides of undotted components under dotted ones and slides of dotted components over other dotted components are also consequences of the remaining moves, as shown in \cite[Figures~2.2.9 \& 2.2.11]{BP11}.

\begin{proposition}\label{P:1-stp}
 Let $L \cup U \cup B, L' \cup U' \cup B'$ be regular diagrams of (oriented) banded unlinks. Then $\varSigma(U,B) \subset W(L)$ and $\varSigma(U',B') \subset W(L')$ are \stpc{1} if and only if $L \cup U \cup B$ and $L' \cup U' \cup B'$ are related by a finite sequence of band moves.
\end{proposition}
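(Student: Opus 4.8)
The plan is to prove the proposition as a refinement, to the setting of \stp{1}, of the diagrammatic calculus of Hughes, Kim, and Miller, whose classification of \emph{isotopic} surfaces (the combination of \cite{HKM18} with Theorem~\ref{T:HKM2}) is phrased in terms of a strictly larger set of band moves. Concretely, I would prove both implications by setting up an explicit dictionary between the elementary operations constituting a \dfrmtn{(1,2)} (as listed in Definition~\ref{D:(k,j)-dfrmtn}) and the band moves of Definition~\ref{D:1-stp_band_unlink}. The single structural feature separating \stp{1} from isotopy is that an \emph{embedded} \dfrmtn{1} of $\varSigma$ never produces handles of index $2$, that is, caps; the crux of the argument is that this is mirrored on the diagrammatic side precisely by the omission of the cap move from the HKM list.

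For the implication that band moves induce a \stp{1}, I would go through the list in Definition~\ref{D:1-stp_band_unlink} and realize each move by an elementary piece of a \dfrmtn{(1,2)}. The creation/removal of canceling dotted--undotted pairs and the slides of undotted components over undotted ones are translations of ambient \dfrmtns{2} of $W$, handled directly by Proposition~\ref{P:2-qvlnc}. Planar isotopies, regular Reidemeister moves, travels of intersections through crossings, and swims are all realized by ambient isotopies of $\varSigma$ in $W$ supported near the relevant strands and preserving the handle attaching maps. The cup move is exactly the creation/removal of a canceling $0/1$ pair of $\varSigma$, and a slide of a band over another band is an embedded $1$-handle slide; both are embedded \dfrmtns{1}. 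Finally, slides of bands or undotted components over undotted components are ambient $2$-handle slides of $W$ that carry the surface along. Assembling these identifications shows that any finite sequence of band moves is covered by a \dfrmtn{(1,2)}.

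The substantial direction, and the main obstacle, is the converse: realizing an arbitrary \dfrmtn{(1,2)} by band moves. Here I would first decompose the deformation into its elementary constituents and treat them separately. The ambient \dfrmtns{2} of $W$ are converted into Kirby moves on $L$ by Proposition~\ref{P:2-qvlnc}. For the handle-preserving isotopies and the embedded \dfrmtns{1} of $\varSigma$, I would run the genericity machinery of \cite{HKM18}: starting from Proposition~\ref{P:banded_unlink_diagram}, I would keep the surface generic with respect to a self-indexing Morse function throughout the one-parameter family, and arrange that the diagram stays in banded unlink position (\cite[Definition~3.1]{HKM18}) via $h$-regular and $h$-disjoint horizontal and vertical isotopies, as in the proofs of \cite[Lemma~4.6, Lemma~4.9, Proposition~4.4]{HKM18}. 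The diagram then evolves through a finite sequence of local events, and the embedded \dfrmtns{1} contribute only band slides and cup moves: since the surface deformation is a \dfrmtn{1}, the Morse function on $\varSigma$ acquires no index-$2$ critical point, so no birth/death of an index-$(1,2)$ pair — that is, no cap move — ever occurs.

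The delicate point, where the HKM analysis does the real work, is the bookkeeping needed to keep the diagram regular while $\varSigma$ and $W$ deform \emph{simultaneously}, and to check that every degeneration encountered is one of the listed moves. In particular one must verify that the interaction of the ambient $2$-handle operations with the surface produces only band moves already on the list — for instance, that slides of unlink components and bands under dotted components are consequences of the remaining moves, as indicated by the sample computation in the text and by \cite[Figures~2.2.9 \& 2.2.11]{BP11} — and that the only surface-handle events are index-$(0,1)$ births/deaths (cup moves) and $1$-handle slides (band slides). Establishing this, with index-$2$ events excluded by the \dfrmtn{1} constraint, completes the identification of band-move-equivalence with \stp{1}.
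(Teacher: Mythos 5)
Your proposal follows essentially the same route as the paper's proof: the forward direction by realizing each band move as an elementary piece of a \dfrmtn{(1,2)}, and the converse by first converting the ambient \dfrmtn{2} of $W$ into Kirby moves on $L$ via Proposition~\ref{P:2-qvlnc} (so that one may assume $L = L'$) and then invoking the genericity machinery of \cite{HKM18} — the paper cites \cite[Lemma~4.11]{HKM18} directly — with the absence of cap moves being precisely where the embedded \dfrmtn{1} constraint enters, exactly as you identify. The only difference is one of emphasis: you spell out the forward direction and the simultaneous-deformation bookkeeping that the paper dispatches as ``clear.''
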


\begin{proof}
 On one hand, if $L \cup U \cup B$ and $L' \cup U' \cup B'$ are related by a finite sequence of band moves, then clearly $\varSigma(U,B) \subset W(L)$ and $\varSigma(U',B') \subset W(L')$ are \stpc{1}.
 
 On the other hand, if $W(L)$ and $W(L')$ are \qvlnt{2}, then $L$ and $L'$ are related by a finite sequence of \dfrmtns{2}. More precisely, creation/removal of canceling pairs of \hndls{1/2} of $W(L)$  is implemented by creation/removal of canceling pairs of dotted and undotted components of $L$, and slides of \hndls{2} of $W(L)$ over other \hndls{2} are implemented by slides of undotted components of $L$ over other undotted components. Since these are special cases of band moves, we can assume that $L = L'$. Furthermore, we can always assume that $\varSigma(U,B)$ and $\varSigma(U',B')$ are generic surfaces, as in \cite[Definition~4.8]{HKM18}, and that the \stp{1} between them is $h$-disjoint, as in \cite[Definition~2.3]{HKM18}, since the latter is a generic condition. Then, it follows from \cite[Lemma~4.11]{HKM18} that, if there exists a \stp{1} from $\varSigma(U,B)$ to $\varSigma(U',B')$, then $U \cup B$ and $U' \cup B'$ are related by a finite sequence of band moves. More precisely, isotopies of $\varSigma(U,B)$ that preserve handle attaching maps are implemented by finite sequences of isotopies of $U \cup B$ in $[0,1]^{\times 2}$, regular Reidemeister moves of $U \cup B$, slides of bands of $U \cup B$ over undotted components of $L$, and swims through bands of $U \cup B$, while embedded creation/removal of canceling pairs of \hndls{0/1} of $\varSigma(U,B)$ is implemented by cup moves, and embedded slides of \hndls{1} of $\varSigma(U,B)$ over other \hndls{1} are implemented by slides of bands of $U \cup B$ over other bands. \qedhere
\end{proof}

\section{Algebraic preliminaries}\label{S:algebraic_preliminaries} 

In this section, we recall the basic algebraic setup required for the construction of quantum invariants of \stp{1} of ribbon surfaces in \dmnsnl{4} \hndlbds{2}.

Concerning our conventions and notations, we will fix throughout the paper an algebraically closed field $\Bbbk$, and the term linear will always stand for $\Bbbk$-linear, unless stated otherwise.

\subsection{Unimodular ribbon categories}\label{S:unimodular_ribbon_categories}

Following \cite[Definition~1.8.5]{EGNO15}, a \textit{finite category} is a linear category $\calC$ over $\Bbbk$ which is equivalent to the category $\mods{A}$ of finite dimensional left $A$-modules for a finite-dimensional algebra $A$ over $\Bbbk$. An equivalent and more explicit characterization is provided by \cite[Definition~1.8.6]{EGNO15}. A finite category $\calC$ has enough projectives, which means that every object $X \in \calC$ admits a projective cover $P_X \in \calC$ equipped with a projection morphism $\varepsilon_X : P_X \to X$.

A \textit{ribbon category} is a braided rigid monoidal category $\calC$ equipped with a ribbon structure. The monoidal structure, which we always assume to be strict by invoking \cite[Theorem~XI.3.1]{M71}, is given by a tensor product $\otimes : \calC \times \calC \to \calC$ and a tensor unit $\one \in \calC$. Rigidity of $\calC$ ensures the existence of two-sided duals, and this allows us to fix dinatural families of left and right evaluation and coevaluation morphisms $\lev_X : X^* \otimes X \to \one$, $\lcoev_X : \one \to X \otimes X^*$, $\rev_X : X \otimes X^* \to \one$, and $\rcoev_X : \one \to X^* \otimes X$ for every $X \in \calC$. The braided structure is given by a natural family of braiding isomorphisms $c_{X,Y} : X \otimes Y \to Y \otimes X$ for all $X,Y \in \calC$. Finally, the ribbon structure is given by a natural family of twist isomorphisms $\vartheta_X : X \to X$ for every $X \in \calC$. These data must satisfy several axioms, which can be found in \cite[Sections~2.1, 2.10, 8.1, 8.10]{EGNO15}.

A rigid category $\calC$ is \textit{unimodular} if it is finite and $P_{\one}^* \cong P_{\one}$, compare with \cite[Definition~6.5.7]{EGNO15}.

If $\calC$ is a braided monoidal category, then a morphism $f \in \calC(X,Y)$ is said to be transparent with respect to an object $Z \in \calC$, or \textit{$Z$-transparent}, if 
\[
 (f \otimes \id_Z) \circ c_{Z,X} \circ c_{X,Z} = f \otimes \id_Z,
\]
and it is said to be \textit{transparent} if it is $Z$-transparent for every object $Z \in \calC$. An object $X \in \calC$ is said to be \textit{transparent} if $\id_X \in \End_\calC(X)$ is transparent. The full subcategory $M(\calC)$ of transparent objects of $\calC$ is called the \textit{Müger center} of $\calC$. A unimodular ribbon category $\calC$ is \textit{factorizable} if all its transparent objects are trivial, meaning that they are isomorphic to direct sums of copies of the tensor unit $\one$. A factorizable ribbon category $\calC$ is sometimes also called a \textit{modular} category. The definition given above is equivalent to several others, as proved in \cite[Theorem~1.1]{S16} and \cite[Proposition~7.1]{BD21}.

\subsection{Unimodular ribbon Hopf algebras}\label{S:Hopf_algebras} 

An important source of examples of unimodular ribbon categories is provided by categories of representations of unimodular ribbon Hopf algebras. Indeed, let $H$ be a Hopf algebra over a field $\Bbbk$, with unit $\eta : \Bbbk \to H$, product $\mu : H \otimes H \to H$, counit $\varepsilon : H \to \Bbbk$, coproduct $\Delta : H \to H \otimes H$, and antipode $S : H \to H$. For all elements $x,y \in H$, we adopt the following short notations: $\mu(x \otimes y) = xy$ for the product, $\eta(1) = 1$ for the unit, and $\Delta^{(n)}(x) = x_{(1)} \otimes \ldots \otimes x_{(n)}$ for the iterated coproduct, where $\Delta^{(0)} = \varepsilon$, $\Delta^{(1)} = \id_H$, $\Delta^{(2)} = \Delta$, and recursively $\Delta^{(n)} = (\Delta \otimes \id_H) \circ \Delta^{(n-1)} = (\id_H \otimes \Delta) \circ \Delta^{(n-1)}$ for every $n \geqs 3$ (notice that we use Sweedler's notation, which hides a sum). If $H$ is finite-dimensional, then the antipode $S$ is invertible, as proved in \cite[Theorem~7.1.14]{R12}.

A \textit{ribbon structure} on $H$ is given by an R-matrix $R = R'_i \otimes R''_i \in H \otimes H$ (which hides a sum) and a ribbon element $v_+ \in \rmZ(H)$, see \cite[Definitions~VIII.2.2. \& XIV.6.1]{K95}. We denote by $v_- \in \rmZ(H)$ the inverse ribbon element, by $u \in H$ the Drinfeld element, and by $M \in H \otimes H$ the M-matrix associated with the R-matrix $R$, which are defined by $v_+ v_- = 1$, by $u = S(R''_i)R'_i$, and by $M = R''_j R'_i \otimes R'_j R''_i$, respectively (with sums hidden). We also denote with $g \in H$ the unique pivotal (or balancing) element that is compatible with $v_+$, which is the group-like element defined by $g = uv_-$.

An element $x \in H$ is \textit{an\-ti\-pode-in\-var\-i\-ant}, and a form $\alpha \in H^*$ is \textit{balanced}, if they satisfy 
\begin{align}\label{E:antipode-invariant_Vect}
 S(x) &= x, &
 \alpha(S(g^2y)) &= \alpha(y)
\end{align}
for every $y \in H$. We denote by $H^S \subset H$ the subspace of an\-ti\-pode-in\-var\-i\-ant elements of $H$, and by $(H^*)^g \subset H^*$ the subspace of balanced forms on $H$.

A right integral element $\Lambda \in H$ is an element of $H$ satisfying 
\begin{equation}\label{E:right_integral_element_Vect}
 \Lambda x = \varepsilon(x) \Lambda
\end{equation}
for every $x \in H$, and a left integral form $\lambda \in H^*$ is a linear form on $H$ satisfying 
\begin{equation}\label{E:left_integral_form_Vect}
 \lambda(x_{(2)}) x_{(1)} = \lambda(x) 1
\end{equation}
for every $x \in H$, see \cite[Definition~10.1.1 \& 10.1.2]{R12}. A right integral element $\Lambda \in H$ and a left integral form $\lambda \in H^*$ are \textit{normalized} if they satisfy
\begin{align}\label{E:integral_normalization_Vect}
 \lambda(\Lambda) = 1 = \lambda(S(\Lambda)).
\end{align}
If $H$ is finite-dimensional, then, thanks to \cite[Theorem~10.2.2]{R12}, the spaces of right integral elements and of left integral forms are both \dmnsnl{1}, they admit a normalized pair of generators $\Lambda \in H$ and $\lambda \in H^*$, and the \textit{Radford map}
\begin{align}
 \Phi_\lambda : H &\to H^* \nonumber \\*
 z &\mapsto \lambda(z \_) \label{E:Radford_Vect}
\end{align}
is a linear isomorphism with inverse
\begin{align}
 \Psi_\Lambda : H^* &\to H \nonumber \\*
 \varphi &\mapsto \varphi(S(\Lambda_{(1)})) \Lambda_{(2)}. \label{E:Radford_inverse_Vect}
\end{align}

A right integral element $\Lambda \in H$ is \textit{two-sided}, or simply an \textit{integral element}, if it is an\-ti\-pode-in\-var\-i\-ant. If $H$ is finite-dimensional, an an\-ti\-pode-in\-var\-i\-ant right integral element $\Lambda \in H$ is also a left integral element, in the sense that it satisfies
\begin{align}\label{E:integral_element_two-sided_Vect}
 x \Lambda &= \varepsilon(x) \Lambda
\end{align}
for every $x \in H$, see \cite[Lemma~10.5.1]{R12}. A finite-dimensional Hopf algebra $H$ is \textit{unimodular} if its non-zero right integral element $\Lambda \in H$ is two-sided, in which case its non-zero left integral form $\lambda \in H^*$ is balanced, compare with \cite[Theorem~10.5.4]{R12}. If $H$ is unimodular, then the category $\mods{H}$ of finite-di\-men\-sion\-al left $H$-mod\-ules is a unimodular ribbon category thanks to \cite[Lemma~2.5]{L97}.

An element $z \in H$ is \textit{central} if it satisfies 
\begin{equation}\label{E:central_element_Vect}
 xz = zx
\end{equation}
for every $x \in H$. We denote by $\CE(H) \subset H$ the vector space of central elements of $H$, also known as the \textit{center} of $H$, and by $\ACE(H) = \CE(H) \cap H^S$ the subspace of an\-ti\-pode-in\-var\-i\-ant central elements of $H$. An example of an an\-ti\-pode-in\-var\-i\-ant central element is the unit $1 \in H$. A linear form $\varphi \in H^*$ is a \textit{left quantum character} if it satisfies 
\begin{equation}\label{E:quantum_character_Vect}
 \varphi(xy) = \varphi(yS^2(x))
\end{equation}
for all $x,y \in H$. We denote by $\QC(H) \subset H^*$ the vector space of left quantum characters on $H$, and by $\BQC(H) = \QC(H) \cap (H^*)^g$ the subspace of balanced left quantum characters on $H$. An example of a balanced left quantum character is the counit $\varepsilon \in H^*$. If $H$ is unimodular, \cite[Theorem~10.5.4]{R12} implies that the two-sided integral element $\Lambda \in H$ is an an\-ti\-pode-in\-var\-i\-ant central element of $H$, and the left integral form $\lambda \in H^*$ is a balanced left quantum character on $H$. Furthermore, it follows from \cite[Corollary~10.7.2]{R12} and \cite[Proposition~4.2]{H96} that the Radford map of Equation~\eqref{E:Radford_Vect}, whose inverse is given by Equation~\eqref{E:Radford_inverse_Vect}, restricts to linear isomorphisms $\Phi_\lambda : \CE(H) \to \QC(H)$ and $\Phi_\lambda : \ACE(H) \to \BQC(H)$ with inverse $\Psi_\Lambda : \QC(H) \to \CE(H)$ and $\Psi_\Lambda : \BQC(H) \to \ACE(H)$, respectively.

An an\-ti\-pode-in\-var\-i\-ant central element $z \in H$ is a \textit{Hennings element} if it satisfies 
\begin{equation}\label{E:Hennings_element_Vect}
 z z_{(1)} \otimes z_{(2)} = z \otimes z.
\end{equation}
We denote by $\AHE(H) \subset \ACE(H)$ the set of Hennings elements of $H$. An example of a Hennings element is the unit $1 \in H$. Notice that a Hennings element $z \in H$, being an\-ti\-pode-in\-var\-i\-ant, also satisfies
\begin{align}\label{E:Hennings_element_two-sided_Vect}
 z_{(1)} \otimes z z_{(2)} &= z \otimes z,
\end{align}
which is coherent with the original definition in \cite[Proposition~4.3]{H96}. A balanced left quantum character $\varphi \in H$ is a \textit{left Hennings form} if it satisfies 
\begin{equation}\label{E:Hennings_form_Vect}
 \varphi(x y_{(1)}) \varphi(y_{(2)}) = \varphi(x) \varphi(y)
\end{equation}
for all $x,y \in H$. We denote by $\BHF(H) \subset \BQC(H)$ the set of left Hennings forms on $H$. An example of a left Hennings form is the counit $\varepsilon \in H^*$. If $H$ is unimodular, then the two-sided integral element $\Lambda \in H$ is a Hennings element, and the left integral form $\lambda \in H^*$ is a left Hennings form. It will follow from Propositions~\ref{P:Hennings} and \ref{P:native_to_transmutation} that, if $\varphi \in H^*$ is a left Hennings form, then $\varphi(z \_) \in H^*$ is a left Hennings form for every Hennings element $z \in H$.

An an\-ti\-pode-in\-var\-i\-ant central element $w \in H$ is a \textit{Bobtcheva--Messia element} if there exists a left Hennings form $\varphi \in H^*$ satisfying 
\begin{equation}\label{E:Bobtcheva-Messia_element_Vect}
 \varphi(wx) = \varepsilon(x)
\end{equation}
for all $x \in H$. In this case, we say the Bobtcheva--Messia element $w$ and the left Hennings form $\varphi$ are \textit{compatible} with each other. When $H$ is unimodular, this definition should be compared with \cite[Theorem~2.14]{BM02}, where a similar condition is considered, namely that there exists a Hennings element $z \in H^*$ such that 
\begin{equation}\label{E:Bobtcheva-Messia_element_original}
 \lambda(zwx) = \lambda(\Lambda x)
\end{equation}
for all $x \in H$ and for a normalized two-sided integral element $\Lambda \in H$ and left integral form $\lambda \in H^*$. Notice that, since the spaces of two-sided integral elements of $H$ and of left integral forms on $H$ are both \dmnsnl{1}, this condition is independent of the choice of $\Lambda \in H$ and $\lambda \in H^*$. We denote by $\ABM(H) \subset \ACE(H)$ the set of Bobtcheva--Messia elements of $H$. An example of a Bobtcheva--Messia elements is the unit $1 \in H$, since the counit $\varepsilon \in H^*$ is a compatible left Hennings form. If $H$ is unimodular, then the two-sided integral element $\Lambda \in H$ is a Bobtcheva--Messia element, since the left integral $\lambda$ is a compatible left Hennings form.

An\-ti\-pode-in\-var\-i\-ant central element $e \in H$ is \textit{idempotent} if it satisfies 
\begin{equation}\label{E:idempotent_Vect}
 e^2 = e.
\end{equation}
We denote by $\ACI(H) \subset \ACE(H)$ the set of an\-ti\-pode-in\-var\-i\-ant central idempotents of $H$. An example of an an\-ti\-pode-in\-var\-i\-ant central idempotent is the unit $1 \in \ACI(H)$. A balanced left quantum character $\psi \in H^*$ is a \textit{left banded unlink form} if there exists a Bobtcheva--Messia element $w \in H$ and a compatible left Hennings form $\varphi \in H^*$ that satisfiy
\begin{align}\label{E:banded_unlink_form_Vect}
 \psi(x y_{(1)}) \varphi(y_{(2)}) &= \psi(x) \varphi(y), &
 \psi(x w_{(1)}) \psi(w_{(2)}) &= \psi(x)
\end{align}
for all $x,y \in H$. In this case, we say the left banded unlink form $\psi$, the Bobtcheva--Messia element $w$, and the left Hennings form $\varphi$ are all \textit{compatible} with each other. We denote by $\BBU(H) \subset \BQC(H)$ the set of left banded unlink forms on $H$. If $w \in H$ is a Bobtcheva--Messia element and $\varphi \in H^*$ is a compatible left Hennings form, an example of a compatible banded unlink form is $\varphi \in \BBU(H)$. It will follow from Propositions~\ref{P:BU_form} and \ref{P:native_to_transmutation} that, if $w \in H$ is a Bobtcheva--Messia element and $\varphi \in H^*$ is a compatible left Hennings form, then $\varphi(e \_) \in H^*$ is a compatible banded unlink form for every an\-ti\-pode-in\-var\-i\-ant central idempotent $e \in H$.

\subsection{(Co)algebras in monoidal categories}\label{S:algebras_modules}

Let $\calC$ be a monoidal category. 

An \textit{algebra} in $\calC$ is an object $\calA \in \calC$ equipped with a \textit{product} $\mu : \calA \otimes \calA \to \calA$ and a \textit{unit} $\eta : \one \to \calA$ satisfying
\[
 \pic{axioms_algebra}
\]

A \textit{coalgebra in $\calC$} is an object $\calB \in \calC$ equipped with a \textit{coproduct} $\Delta : \calB \to \calB \otimes \calB$ and a \textit{counit} $\varepsilon : \calB \to \one$ satisfying
\[
 \pic{axioms_coalgebra}
\]

A \textit{Frobenius algebra} in $\calC$ is an object $\calF \in \calC$ equipped with an algebra and a coalgebra structure satisfying
\[
 \pic{axioms_Frobenius}
\]

If $\calC$ is braided, a \textit{Hopf algebra} in $\calC$ is an object $\calH \in \calC$ equipped with an algebra and a coalgebra structure, as well as an \textit{antipode} $S : \calH \to \calH$ with inverse $S^{-1} : \calH \to \calH$, satisfying
\begin{gather*}
 \pic{axioms_bialgebra_1} \\
 \pic{axioms_bialgebra_2} \\
 \pic{axioms_antipode}
\end{gather*}

\subsection{Integrals, central elements, and quantum characters}

Let us fix a braided monoidal category $\calC$. If $\calH \in \calC$ is a Hopf algebra, then an \textit{element} of $\calH$ is a morphism $x : \one \to \calH$, and a \textit{form} on $\calH$ is a morphism $\alpha : \calH \to \one$. An element $x$ of $\calH$ and a form $\alpha$  on $\calH$ are \textit{an\-ti\-pode-in\-var\-i\-ant} if they satisfy
\begin{equation}
 \pic{axioms_antipode-invariant} \label{E:antipode-invariant}
\end{equation}
We denote by $\calC(\one,\calH)^S \subset \calC(\one,\calH)$ the set of an\-ti\-pode-in\-var\-i\-ant elements of $\calH$, and we denote by $\calC(\calH,\one)^S \subset \calC(\calH,\one)$ the set of an\-ti\-pode-in\-var\-i\-ant forms on $\calH$. 

Every form $\alpha$ on $\calH$ defines a map
\begin{align}
 \Phi_\alpha : \calC(\one,\calH) &\to \calC(\calH,\one) \nonumber \\*
 \pic{axioms_Phi_1} &\mapsto \pic{axioms_Phi_2} \label{E:Phi}
\end{align}
and every element $x$ of $\calH$ defines a map
\begin{align}
 \Psi_x : \calC(\calH,\one) &\to \calC(\one,\calH) \nonumber \\*
 \pic{axioms_Psi_1} &\mapsto \pic{axioms_Psi_2} \label{E:Psi}
\end{align}
Notice that, if $x$ and $\alpha$ are both an\-ti\-pode-in\-var\-i\-ant, then
\begin{align}
 \pic{axioms_ace_to_qc} \label{E:S_Phi_for_antipode_invariant} \\
 \pic{axioms_aqc_to_ce} \label{E:S_Psi_for_antipode_invariant}
\end{align}
Therefore, $\Phi_\alpha$ and $\Psi_x$ do not preserve the property of being an\-ti\-pode-in\-var\-i\-ant in general, even when $x$ and $\alpha$ are themselves an\-ti\-pode-in\-var\-i\-ant.

A \textit{right integral element} of a Hopf algebra $\calH \in \calC$ is an element $\Lambda$  of $\calH$ satisfying
\begin{equation}\label{E:right_integral_element}
 \pic{axioms_right_integral_element}
\end{equation}
A \textit{left integral form} on a Hopf algebra $\calH \in \calC$ is a form $\lambda$ on $\calH$ satisfying
\begin{equation}\label{E:left_integral_form}
 \pic{axioms_left_integral_form}
\end{equation}
A right integral element $\Lambda$ of $\calH$ and a left integral form $\lambda$ on $\calH$ are \textit{normalized} if they satisfy
\begin{equation}\label{E:integral_normalization}
 \pic{axioms_integral_normalization}
\end{equation}

A right integral element $\Lambda$ of $\calH$ is \textit{two-sided}, or simply an \textit{integral element}, if it is an\-ti\-pode-in\-var\-i\-ant, and a left integral form $\lambda$ on $\calH$ is \textit{two-sided}, or simply an \textit{integral form}, if it is an\-ti\-pode-in\-var\-i\-ant. Notice that a two-sided integral element $\Lambda$ of $\calH$ and a two-sided integral form $\lambda$ on $\calH$, being an\-ti\-pode-in\-var\-i\-ant, also satisfy the following conditions, see \cite[Table~3.3.4, Equations~(i$1'$) \& (i$2'$)]{BBDP23}.
\begin{equation}\label{E:integral_two-sided}
 \pic{axioms_integral_two-sided}
\end{equation}

\begin{proposition}\label{P:Radford_antipode-invariants}
 If $\calH \in \calC$ is a Hopf algebra with a normalized right integral element $\Lambda$ and left integral form $\lambda$, then the \textit{Radford map} $\Phi_\lambda : \calC(\one,\calH) \to \calC(\calH,\one)$ defined by Equation~\eqref{E:Phi} is a bijection with inverse $\Psi_\Lambda : \calC(\calH,\one) \to \calC(\one,\calH)$ defined by Equation~\eqref{E:Psi}, and if furthermore both $\Lambda$ and $\lambda$ are two-sided, then the Radford map restricts to a bijection $\Phi_\lambda : \calC(\one,\calH)^S \to \calC(\calH,\one)^S$ with inverse $\Psi_\Lambda : \calC(\calH,\one)^S \to \calC(\one,\calH)^S$.
\end{proposition}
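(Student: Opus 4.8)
The plan is to treat this statement as the categorical incarnation of Radford's theorem \cite[Theorem~10.2.2]{R12} and to verify the two claims in turn. For the first claim I would check directly that $\Psi_\Lambda \circ \Phi_\lambda = \id_{\calC(\one,\calH)}$ and $\Phi_\lambda \circ \Psi_\Lambda = \id_{\calC(\calH,\one)}$, which already forces both maps to be mutually inverse bijections. Expanding $\Psi_\Lambda(\Phi_\lambda(x))$ from the definitions \eqref{E:Phi} and \eqref{E:Psi} produces a string diagram built from a coproduct of $\Lambda$, the antipode $S$ applied to one of its legs, a product with $x$, and the form $\lambda$; the symmetric composite $\Phi_\lambda(\Psi_\Lambda(\beta))$ is analogous. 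The whole of the first claim is then a bookkeeping exercise in the graphical calculus.

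The heart of the first claim, and the step I expect to be the main obstacle, is collapsing these diagrams to the identity. Here I would feed in the defining equation \eqref{E:right_integral_element} of the right integral element $\Lambda$, the defining equation \eqref{E:left_integral_form} of the left integral form $\lambda$, and the Hopf-algebra axioms (in particular the antipode and bialgebra relations), to reduce each composite to a scalar multiple of the identity; the normalization \eqref{E:integral_normalization} then fixes that scalar to be $1$. Concretely this amounts to reproving the fundamental theorem of Hopf modules internal to $\calC$, so rather than redo that argument from scratch I would assemble it from the integral identities already recorded in \cite[Table~3.3.4]{BBDP23}, of which the relations cited for \eqref{E:integral_two-sided} form a part, leaving only their normalized combination to be carried out here.

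For the second claim, once the first is in hand it suffices to show that, when $\Lambda$ and $\lambda$ are two-sided, the bijections restrict to the antipode-invariant subspaces, i.e. that $\Phi_\lambda\bigl(\calC(\one,\calH)^S\bigr) \subseteq \calC(\calH,\one)^S$ and $\Psi_\Lambda\bigl(\calC(\calH,\one)^S\bigr) \subseteq \calC(\one,\calH)^S$; the restricted maps are then automatically mutually inverse, being restrictions of mutually inverse bijections. To establish the two inclusions I would use the relations \eqref{E:S_Phi_for_antipode_invariant} and \eqref{E:S_Psi_for_antipode_invariant}, which record how $\Phi$ and $\Psi$ interact with the antipode on antipode-invariant inputs. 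The subtle point, which the remark preceding the proposition already flags, is that these relations alone do \emph{not} yield invariance: for a generic antipode-invariant form $\alpha$ or element $x$ the maps $\Phi_\alpha$ and $\Psi_x$ fail to preserve \eqref{E:antipode-invariant}. What rescues the argument is that $\lambda$ and $\Lambda$ are not merely antipode-invariant but two-sided integrals, so that their defining properties \eqref{E:left_integral_form} and \eqref{E:right_integral_element} (which classically upgrade them to a balanced quantum character and an antipode-invariant central element) absorb the residual $S^2$-type ribbon twist produced on the right-hand sides of \eqref{E:S_Phi_for_antipode_invariant} and \eqref{E:S_Psi_for_antipode_invariant}. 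This yields $\Phi_\lambda(x)\circ S = \Phi_\lambda(x)$ and $S \circ \Psi_\Lambda(\beta) = \Psi_\Lambda(\beta)$, and correctly tracking that twist is the only delicate part of the second claim.
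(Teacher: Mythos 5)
Your proposal follows essentially the same route as the paper: the first claim is reduced to the two zig-zag identities $\Psi_\Lambda \circ \Phi_\lambda = \id$ and $\Phi_\lambda \circ \Psi_\Lambda = \id$ (the paper's Equation~\eqref{E:Radford_zig-zag}), deduced from \eqref{E:right_integral_element}, \eqref{E:left_integral_form}, \eqref{E:integral_normalization} and the Hopf axioms by adapting the identities of \cite[Table~3.3.4]{BBDP23}; the second claim is obtained by checking that both maps preserve antipode-invariance once $\Lambda$ and $\lambda$ are two-sided, the restrictions then being automatically mutually inverse. One caveat on the first claim, which is precisely where the paper's proof spends its effort: the identities recorded in \cite[Table~3.3.4]{BBDP23} are largely proved there for \emph{two-sided} integrals, whereas here $\Lambda$ is only a right integral element and $\lambda$ only a left integral form. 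The paper checks that one of the two zig-zag identities carries over verbatim because its recorded proof only uses the one-sided axioms, but the other does not --- its last step invokes relations that fail for a merely left integral form --- and has to be rederived from the normalization and more elementary relations. Your stated inputs are the correct minimal hypotheses, so the plan is sound, but you should not assume the off-the-shelf proofs of the Table~3.3.4 identities apply unchanged; one of the two composites requires a separate argument.
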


\begin{proof}
 The first statement amounts to
 \begin{equation}\label{E:Radford_zig-zag}
  \pic{Radford_zig-zag}
 \end{equation}
 On one hand, the identity on the left is equivalent to \cite[Table~3.3.4, Equation~(e$3$)]{BBDP23}, up to conjugating both terms by $S$. Since the proof in \cite[Figure~B.2.1]{BBDP23} only uses \cite[Table~3.3.1, Equations~(i$1$), (i$2$), (i$3$)]{BBDP23}, together with Hopf algebra axioms, the result also holds when $\Lambda$ is only a right integral element and $\lambda$ is only a left integral form. On the other hand, the identity on the right coincides with \cite[Table~3.3.4, Equation~(e$3'$)]{BBDP23}, but the proof in \cite[Figure~B.2.2]{BBDP23} uses, for its very last step, \cite[Table~3.3.1, Equation~(i$5$); Table~3.3.4, Equation~(i$1'$)]{BBDP23}, which do not hold when $\lambda$ is only a left integral form. Still, this equality can equivalently be achieved by applying \cite[Table~3.1.1, Equation~(s$2$); Table~3.1.2, Equation~(s$5$); Table~3.3.1, Equation~(i$1$)]{BBDP23} instead, at least when $\Lambda$ and $\lambda$ are normalized.
 
 If $\Lambda$ and $\lambda$ are both two-sided, then the second statement follows from \cite[Table~3.3.4, Equations~(e$6$) \& (e$7$)]{BBDP23}, because for every an\-ti\-pode-in\-var\-i\-ant element $x$ of $\calH$ and every an\-ti\-pode-in\-var\-i\-ant form $\alpha$ on $\calH$ we have the following.
 \begin{equation}\label{E:Radford_co-evaluation}
  \pic{Radford_co-evaluation}
 \end{equation}
\end{proof}

An element $z$ of a Hopf algebra $\calH \in \calC$ is \textit{central} if it satisfies
\begin{equation}\label{E:ac}
 \pic{axioms_central_element}
\end{equation}
We denote by $\CE(\calH) \subset \calC(\one,\calH)$ the set of central elements of $\calH$, and we denote by $\ACE(\calH) = \CE(\calH) \cap \calC(\one,\calH)^S$ the subset of an\-ti\-pode-in\-var\-i\-ant central elements of $\calH$. An example of an an\-ti\-pode-in\-var\-i\-ant central element is the unit $\eta \in \ACE(\calH)$, as follows from \cite[Table~3.1.3, Equation~(s$6$)]{BBDP23}.

A form $\varphi$ on a Hopf algebra $\calH \in \calC$ is a \textit{left quantum character} if it satisfies
\begin{equation}\label{E:qc}
 \pic{axioms_quantum_character}
\end{equation}
We denote by $\QC(\calH) \subset \calC(\calH,\one)$ the set of left quantum characters on $\calH$, and we denote by $\AQC(\calH) = \QC(\calH) \cap \calC(\calH,\one)^S$ the subset of \textit{two-sided} quantum characters, or simply \textit{quantum characters}, on $\calH$. An example of a two-sided quantum character is the counit $\varepsilon \in \AQC(\calH)$, as follows from \cite[Table~3.1.3, Equation~(s$7$)]{BBDP23}. Notice that, equivalently, a two-sided quantum character on a Hopf algebra $\calH \in \calC$ can be defined as an an\-ti\-pode-in\-var\-i\-ant form $\varphi$ on $\calH$ satisfying
\begin{equation}\label{E:qc_symmetric}
 \pic{axioms_quantum_character_symmetric}
\end{equation}
The equivalence is a direct consequence of \cite[Table~3.1.3, Equation~(s$4$)]{BBDP23}. Indeed, Equation~\eqref{E:qc} implies Equation~\eqref{E:qc_symmetric} as shown.
\[
 \pic{axioms_quantum_character_equivalence_1}
\]
Similarly, Equation~\eqref{E:qc_symmetric} implies Equation~\eqref{E:qc} as shown.
\[
 \pic{axioms_quantum_character_equivalence_2}
\]

\begin{lemma}
 If $\calH \in \calC$ is a Hopf algebra with a normalized two-sided integral element $\Lambda$ and left integral form $\lambda$, then $\Lambda \in \ACE(\calH)$ and $\lambda \in \QC(\calH)$.
\end{lemma}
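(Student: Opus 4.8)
The plan is to verify the two memberships independently, each time by matching the defining relation of the target notion against the integral relations already recorded for $\Lambda$ and $\lambda$.

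For the claim $\Lambda \in \ACE(\calH)$, recall that by hypothesis $\Lambda$ is two-sided, hence antipode-invariant, so $\Lambda \in \calC(\one,\calH)^S$ and only the centrality relation \eqref{E:ac} remains to be checked. The right integral relation \eqref{E:right_integral_element} says that right multiplication by $\Lambda$ equals $\Lambda \circ \varepsilon$; since $\Lambda$ is two-sided, the companion relation \eqref{E:integral_two-sided} says that left multiplication by $\Lambda$ also equals $\Lambda \circ \varepsilon$. Equating these two composites yields \eqref{E:ac} directly, the braiding appearing there being harmless because both sides factor through the counit. This is precisely the categorical rendering of $x\Lambda = \varepsilon(x)\Lambda = \Lambda x$, and it presents no real difficulty.

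For the claim $\lambda \in \QC(\calH)$, the content is the quantum character relation \eqref{E:qc}, which is the categorical analogue of the classical fact (see \cite[Theorem~10.5.4]{R12}) that a left integral form on a unimodular Hopf algebra is a trace twisted by $S^2$; note that unimodularity is supplied here precisely by the two-sidedness of $\Lambda$, so that $\lambda$ itself need not be assumed two-sided. Because verifying \eqref{E:qc} head-on is awkward, I would instead target the equivalent symmetric formulation \eqref{E:qc_symmetric}, invoking the equivalence established right after it. The derivation I have in mind feeds the defining left integral relation \eqref{E:left_integral_form} into one leg of a coproduct, uses the two-sided integral identities \eqref{E:integral_two-sided} together with the normalization \eqref{E:integral_normalization} to re-express the result, and then produces the required $S^2$ and braiding out of the antipode axioms; the Radford zig-zag \eqref{E:Radford_zig-zag} from Proposition~\ref{P:Radford_antipode-invariants} is available to cancel an inserted copy of $\Lambda$ should one prefer to argue by inserting a resolution of the identity through $\Phi_\lambda$ and $\Psi_\Lambda$.

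Conceptually the cleanest route is to observe that $\lambda = \Phi_\lambda(\eta)$, so that $\lambda \in \QC(\calH)$ would follow at once from the assertion that the Radford map $\Phi_\lambda$ carries central elements to quantum characters, combined with the centrality of the unit $\eta$. I expect the main obstacle to be exactly this interaction of $\Phi_\lambda$ with the $S^2$-twist: the diagrammatic bookkeeping that converts left multiplication inside $\lambda$ into right multiplication by $S^2$ while correctly tracking the braiding. This step, rather than the formal centrality argument for $\Lambda$, is the genuine heart of the lemma, and I would carry it out by the same graphical manipulations used for the analogous identities in \cite[Table~3.3.4]{BBDP23}, the present statement being their specialization to a two-sided integral element paired with a left integral form.
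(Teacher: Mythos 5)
The first half of your argument is exactly the paper's: two-sidedness of $\Lambda$ gives antipode-invariance by definition, and combining Equation~\eqref{E:right_integral_element} with Equation~\eqref{E:integral_two-sided} gives centrality, the braiding in Equation~\eqref{E:ac} being absorbed because an element is a morphism out of $\one$. No issue there.

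The second half is where the real content lies, and your proposal does not close it; worse, the route you call ``conceptually cleanest'' is circular. The statement that $\Phi_\varphi$ carries central elements to quantum characters is Proposition~\ref{P:ce_qc}$(i)$, whose hypothesis is that $\varphi$ is \emph{already} a quantum character; applying it with $\varphi = \lambda$ to deduce $\lambda = \Phi_\lambda(\eta) \in \QC(\calH)$ presupposes exactly the membership $\lambda \in \QC(\calH)$ you are trying to establish (and indeed the lemma precedes Proposition~\ref{P:ce_qc} in the paper for this reason). As for your primary route, the named ingredients are the right ones, but the assertion that the required $S^2$ and braiding are ``produced out of the antipode axioms'' misplaces the essential input: the antipode axioms hold for any Hopf algebra, yet for a non-normalized or one-sided integral the form $\lambda$ is a quantum character only up to a modular twist, so the axioms alone cannot yield Equation~\eqref{E:qc}. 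In the paper's proof the Radford zig-zag~\eqref{E:Radford_zig-zag} --- which is where normalization and two-sidedness of $\Lambda$ enter --- is used twice and is the engine of the whole argument: it first produces the intermediate identity expressing $S^2$ in terms of the twist, and is then invoked again to convert left multiplication inside $\lambda$ into right multiplication by $S^2$. Your sketch treats the zig-zag as an optional convenience (``should one prefer to argue by inserting a resolution of the identity''), so as written the key step is not actually carried out. To repair the proposal, make the insertion of $\Psi_\Lambda \circ \Phi_\lambda = \id$ the first, mandatory move, derive the twist--antipode-square identity from it, and only then run the coproduct manipulation you describe.
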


\begin{proof}
 The fact that $\Lambda$ is an an\-ti\-pode-in\-var\-i\-ant central element of $\calH$ follows from Equations~\eqref{E:right_integral_element} and \eqref{E:integral_two-sided}. The fact that $\lambda$ is a left quantum character on $\calH$ follows from the following identity, which itself follows from Equation~\eqref{E:Radford_zig-zag}.
 \[
  \pic{axioms_twist_antipode_square}
 \]
 Then, thanks again to Equation~\eqref{E:Radford_zig-zag}, we have
 \[
  \pic{axioms_left_integral_form_quantum_character}
 \]
\end{proof}

An\-ti\-pode-in\-var\-i\-ant central elements and two-sided quantum characters are in bijection through the Radford map. Indeed, we have the following result.

\begin{proposition}\label{P:ce_qc}
 If $\calH \in \calC$ is a Hopf algebra, then:
 \begin{enumerate}
  \item For every quantum character $\varphi \in \QC(\calH)$, the map defined by Equation~\eqref{E:Phi} restricts to a map $\Phi_\varphi : \CE(\calH) \to \QC(\calH)$;
  \item For every central element $z \in \CE(\calH)$, the map defined by Equation~\eqref{E:Psi} restricts to a map $\Psi_z : \QC(\calH) \to \CE(\calH)$.
 \end{enumerate}
\end{proposition}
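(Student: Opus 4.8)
The plan is to verify, in each case, that the output morphism satisfies the defining equation of its target class, namely Equation~\eqref{E:qc} for part~(1) and Equation~\eqref{E:ac} for part~(2), by a direct string-diagram computation that slides a strand across the two elementary ``gadgets'' making up $\Phi_\varphi(z)$ and $\Psi_z(\varphi)$: one uses the centrality of $z$ (Equation~\eqref{E:ac}) to pass the $z$-part, and the quantum character property of $\varphi$ (Equation~\eqref{E:qc}) to pass the $\varphi$-part. Before starting I would record the two consequences of Equation~\eqref{E:qc} that carry the real weight. First, $\varphi$ is $S^2$-invariant, $\varphi = \varphi \circ S$, obtained by taking the second input of Equation~\eqref{E:qc} to be $\eta$. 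Second, $\varphi$ is adjoint-invariant, which in Sweedler shorthand reads $\varphi(h_{(1)} a S(h_{(2)})) = \varepsilon(h) \varphi(a)$, and follows from Equation~\eqref{E:qc} together with the antipode axiom and $S(h_{(1)}) h_{(2)} = \varepsilon(h)$.

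For part~(1), the morphism $\Phi_\varphi(z)$ is $\varphi \circ \mu \circ (z \otimes \id_\calH)$, so checking Equation~\eqref{E:qc} amounts, in the braided graphical calculus, to the identity $\varphi(z x y) = \varphi(z y S^2(x))$. I would first use the centrality of $z$ (Equation~\eqref{E:ac}) to slide $z$ past $x$, rewriting $\varphi(z x y)$ as $\varphi(x z y)$; here no braiding is produced, since $z$ issues from the tensor unit, which is transparent. I would then apply the quantum character property of $\varphi$ (Equation~\eqref{E:qc}) to the pair $(x, z y)$, obtaining $\varphi(z y S^2(x))$, which is exactly $\Phi_\varphi(z)$ evaluated on $y S^2(x)$. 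This direction is short: one centrality move followed by one application of Equation~\eqref{E:qc}.

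For part~(2), the morphism $\Psi_z(\varphi)$ is $(\varphi \otimes \id_\calH) \circ (S \otimes \id_\calH) \circ \Delta \circ z$, and I would verify centrality in the equivalent adjoint-invariant form $\ad_h(\Psi_z(\varphi)) = \varepsilon(h)\, \Psi_z(\varphi)$, which is Equation~\eqref{E:ac} rephrased. The key input is that $z$ central means $\ad_h(z) = \varepsilon(h) z$; applying $\Delta$ to this distributes the conjugation by $h$ across both legs of $\Delta(z)$. Feeding the first leg through $S$ and $\varphi$, one absorbs its share of the conjugation using the adjoint-invariance of $\varphi$ recorded above, while the second leg carries the conjugation that realizes $\ad_h$ on the output strand. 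Graphically this is again a slide of the test strand through the $z$-vertex, via Equation~\eqref{E:ac}, and past the $\varphi$-cap, via Equation~\eqref{E:qc}.

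I expect the main obstacle to be the braided bookkeeping in part~(2). Because $\calC$ is only braided, not symmetric, sliding the test strand produces braidings and antipode-square twists, and one must check that the twist contributed by the quantum character property of $\varphi$ cancels exactly the braiding contributed by the centrality slide; this is precisely the step where the $S^2$ in Equation~\eqref{E:qc} is indispensable. Concretely, at the Sweedler level the conjugators of the two legs of $\Delta(z)$ are entangled within a single iterated coproduct of $h$, so the absorption into $\varphi$ cannot be carried out leg-by-leg and must be organized as one coherent diagram move rather than a componentwise manipulation. This is the place where I would draw the full diagram carefully. I also note that, since $\calC$ is not assumed rigid in this section, I cannot shortcut part~(2) by dualizing $\calH$ and invoking part~(1) for $\calH^*$, so the direct diagrammatic computation seems unavoidable.
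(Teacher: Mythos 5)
Your part~(1) is fine and is essentially the paper's own argument: one centrality move to pass $z$ across the first input (legitimate, since morphisms out of the tensor unit braid trivially), then one application of Equation~\eqref{E:qc} to the pair $(x,zy)$. (Minor slip: the consequence you record by setting the second input to $\eta$ is $\varphi = \varphi \circ S^2$, not $\varphi = \varphi \circ S$; antipode-invariance is exactly what a merely \emph{left} quantum character need not satisfy.)

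Part~(2) has a genuine gap, and it sits exactly at the step you defer to ``drawing the full diagram carefully''. In Sweedler notation (the braided bookkeeping only makes this worse), your plan is to apply $\Delta$ to $h_{(1)} z S(h_{(2)}) = \varepsilon(h) z$, getting $h_{(1)} z_{(1)} S(h_{(4)}) \otimes h_{(2)} z_{(2)} S(h_{(3)}) = \varepsilon(h)\, z_{(1)} \otimes z_{(2)}$, and then to absorb the first leg's conjugation into $\varphi \circ S$ using the adjoint-invariance of $\varphi$. But after the antipode that conjugation reads $S^2(h_{(4)})\, S(z_{(1)})\, S(h_{(1)})$, which is \emph{not} of the form $k_{(1)}\, a\, S(k_{(2)})$ to which your recorded identity applies: $h_{(1)}$ and $h_{(4)}$ are the outer legs of $\Delta^{(4)}(h)$, with $h_{(2)}, h_{(3)}$ still alive outside $\varphi$, and every attempt to reposition them with Equation~\eqref{E:qc} only escalates the antipode powers to $S^3$, $S^4$, \dots{} without ever producing a pair that contracts to $\varepsilon(h)$. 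You correctly diagnose the entanglement, but diagnosing it is not resolving it; the ``one coherent diagram move'' is precisely the missing content. (The rephrasing of Equation~\eqref{E:ac} as adjoint-invariance is also asserted rather than checked in the braided setting, though that part is routine.)

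The computation that actually closes — and is what the paper's four diagram lines in Appendix~\ref{A:proof_ce_qc} encode, via the antipode identities (s$4$) and (s$7$) of \cite{BBDP23} — is organized differently and never passes through the adjoint action of $h$ on $z$. Starting from $h\,\Psi_z(\varphi) = \varepsilon(h_{(1)})\,\varphi(S(z_{(1)}))\, h_{(2)} z_{(2)}$, one inserts $\varepsilon(h_{(1)})\,1 = S(h_{(2)})\,S^2(h_{(1)})$ at the tail inside $\varphi$, applies Equation~\eqref{E:qc} once in the direction $\varphi(y\,S^2(x)) = \varphi(xy)$ so that $h_{(1)}$ reappears at the head with no antipode square, recognizes the result as $\varphi(h_{(1)}\, S(h_{(2)} z_{(1)}))\, h_{(3)} z_{(2)}$, applies centrality in the coproduct form $h_{(2)} z_{(1)} \otimes h_{(3)} z_{(2)} = z_{(1)} h_{(2)} \otimes z_{(2)} h_{(3)}$, and closes with $h_{(1)} S(h_{(2)}) = \varepsilon(h)\,1$. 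Note in particular that \eqref{E:qc} must be used in the direction that \emph{lowers} $S^2$, which is the direction your absorption scheme cannot access; this is the idea your proposal is missing.
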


Since the proof of Proposition~\ref{P:ce_qc} is straightforward but rather long, we postpone it to Appendix~\ref{A:proof_ce_qc}. The following statement is then a direct consequence of Propositions~\ref{P:Radford_antipode-invariants} and \ref{P:ce_qc}.

\begin{corollary}\label{P:Radford_ce_to_qc}
 If $\calH \in \calC$ is a Hopf algebra with a normalized right integral element $\Lambda$ and left integral form $\lambda$, then the restriction $\Phi_\lambda : \CE(\calH) \to \QC(\calH)$ of the Radford map defined by Equation~\eqref{E:Phi} is a bijection with inverse $\Psi_\Lambda : \QC(\calH) \to \CE(\calH)$ defined by Equation~\eqref{E:Psi}, and if furthermore both $\Lambda$ and $\lambda$ are two-sided, then $\Phi_\lambda : \ACE(\calH) \to \AQC(\calH)$ is a bijection with inverse $\Psi_\Lambda : \AQC(\calH) \to \ACE(\calH)$.
\end{corollary}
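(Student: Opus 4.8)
The plan is to obtain the corollary as a purely formal combination of the two preceding propositions, with no new diagrammatic computation: Proposition~\ref{P:Radford_antipode-invariants} already supplies the global Radford bijections together with their restriction to the antipode-invariant subspaces, while Proposition~\ref{P:ce_qc} guarantees that these maps carry the center into the space of quantum characters and back. It then remains only to assemble these statements.

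First I would record the inputs explicitly. By Proposition~\ref{P:Radford_antipode-invariants}, the maps $\Phi_\lambda : \calC(\one,\calH) \to \calC(\calH,\one)$ and $\Psi_\Lambda : \calC(\calH,\one) \to \calC(\one,\calH)$ are mutually inverse bijections, and, once $\Lambda$ and $\lambda$ are two-sided, they restrict to mutually inverse bijections between $\calC(\one,\calH)^S$ and $\calC(\calH,\one)^S$. To feed Proposition~\ref{P:ce_qc} I would invoke the preceding Lemma, which yields $\lambda \in \QC(\calH)$ and $\Lambda \in \CE(\calH)$; these are precisely the hypotheses under which part~(i) of Proposition~\ref{P:ce_qc} gives $\Phi_\lambda(\CE(\calH)) \subseteq \QC(\calH)$ and part~(ii) gives $\Psi_\Lambda(\QC(\calH)) \subseteq \CE(\calH)$.

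Next I would apply the elementary observation that two mutually inverse bijections $f$ and $g$ satisfying $f(A) \subseteq B$ and $g(B) \subseteq A$ automatically restrict to mutually inverse bijections $A \to B$, since $A = g(f(A)) \subseteq g(B) \subseteq A$ forces $g(B) = A$ and, symmetrically, $f(A) = B$. Taking $f = \Phi_\lambda$, $g = \Psi_\Lambda$, $A = \CE(\calH)$, $B = \QC(\calH)$ proves the first assertion. For the two-sided refinement I would use that $\ACE(\calH) = \CE(\calH) \cap \calC(\one,\calH)^S$ and $\AQC(\calH) = \QC(\calH) \cap \calC(\calH,\one)^S$ by definition, together with the fact that an injective map preserves intersections of subsets; applying $\Phi_\lambda$ and combining the bijection $\CE(\calH) \to \QC(\calH)$ just obtained with the bijection $\calC(\one,\calH)^S \to \calC(\calH,\one)^S$ of Proposition~\ref{P:Radford_antipode-invariants} then yields $\Phi_\lambda(\ACE(\calH)) = \AQC(\calH)$, with $\Psi_\Lambda$ as inverse by the same reasoning.

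I do not expect any serious obstacle, since all the substantive content is already packaged in Propositions~\ref{P:Radford_antipode-invariants} and \ref{P:ce_qc}. The only point requiring genuine attention is checking that Proposition~\ref{P:ce_qc} is applicable at all, namely that the chosen integral form $\lambda$ is itself a quantum character and the integral element $\Lambda$ is itself central; everything downstream is the bookkeeping of restricting a bijection and intersecting with the antipode-invariant subspaces.
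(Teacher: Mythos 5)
Your proposal is correct and is essentially the paper's own argument: the paper derives this corollary in one line as "a direct consequence of Propositions~\ref{P:Radford_antipode-invariants} and \ref{P:ce_qc}", and your write-up just makes the bookkeeping explicit (mutually inverse bijections plus the two containments force restricted bijections, then intersect with the antipode-invariant subspaces). The one caveat you correctly isolate --- that Proposition~\ref{P:ce_qc} needs $\lambda \in \QC(\calH)$ and $\Lambda \in \CE(\calH)$, which the preceding Lemma only supplies under the two-sidedness hypothesis even though the first half of the corollary assumes merely a right integral element --- is a wrinkle inherited from the paper's own statement rather than a defect of your proof.
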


\subsection{Hennings forms, Bobtcheva--Messia elements, and band\-ed unlink forms}\label{S:H-forms_BM-elements_BU-forms}

Let us fix a braided monoidal category $\calC$. 

\begin{definition}\label{D:Hennings_element}
 An an\-ti\-pode-in\-var\-i\-ant central element $z \in \ACE(\calH)$ of a Hopf algebra $\calH \in \calC$ is a \textit{Hennings element} if it satisfies
 \begin{equation}\label{E:Hennings_element}
  \pic{Hennings_element}
 \end{equation}
\end{definition}

We denote by $\AHE(\calH) \subset \ACE(\calH)$ the set of Henning elements of $\calH$. An example of a Hennings element is the unit $\eta \in \AHE(\calH)$. 

\begin{definition}\label{D:Hennings_form}
 A two-sided quantum character $\varphi \in \AQC(\calH)$ on a Hopf algebra $\calH \in \calC$ is a \textit{Hennings form} if it satisfies
 \begin{equation}\label{E:Hennings_form}
  \pic{Hennings_form}
 \end{equation}
\end{definition}

We denote by $\AHF(\calH) \subset \AQC(\calH)$ the set of Henning forms on $\calH$. An example of a Hennings form is the counit $\varepsilon \in \AHF(\calH)$. Notice that a Hennings element $z$ of $\calH$ and a Hennings form $\varphi$ on $\calH$, being an\-ti\-pode-in\-var\-i\-ant, also satisfy
\begin{equation}\label{E:Hennings_two-sided}
 \pic{Hennings_two-sided}
\end{equation}
because \cite[Table~3.1.3, Equations~(s$4$) \& (s$5$)]{BBDP23}, or more generally \cite[Proposition~3.1.7]{BBDP23}, imply
\begin{equation}\label{E:symmetry}
 \pic{axioms_symmetry_1}
\end{equation}

\begin{lemma}
 If $\calH \in \calC$ is a Hopf algebra with a two-sided integral element $\Lambda$ and a two-sided integral form $\lambda$, then $\Lambda \in \AHE(\calH)$ and $\lambda \in \AHF(\calH)$.
\end{lemma}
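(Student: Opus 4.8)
The plan is to dispatch the two memberships and the two defining equations one at a time, each reducing to a single application of an integral axiom followed by a (co)unit cancellation.

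Since $\Lambda$ and $\lambda$ are two-sided, they are an\-ti\-pode-in\-var\-i\-ant by definition. The previous Lemma already records that a two-sided integral element lies in $\ACE(\calH)$ and that a left integral form is a left quantum character; the arguments for these two facts use only Equations~\eqref{E:right_integral_element} and \eqref{E:integral_two-sided} (for centrality of $\Lambda$) and the Radford zig-zag \eqref{E:Radford_zig-zag} (for $\lambda \in \QC(\calH)$), and since the quantum-character condition \eqref{E:qc} is invariant under rescaling, no normalization hypothesis is actually needed. Combining $\lambda \in \QC(\calH)$ with its an\-ti\-pode-in\-var\-i\-ance yields $\lambda \in \AQC(\calH) = \QC(\calH) \cap \calC(\calH,\one)^S$. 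Hence $\Lambda \in \ACE(\calH)$ and $\lambda \in \AQC(\calH)$, and it only remains to verify Equations~\eqref{E:Hennings_element} and \eqref{E:Hennings_form}.

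For the Hennings element equation \eqref{E:Hennings_element}, I would feed the left leg of $\Delta \circ \Lambda$ into the product $\mu \circ (\Lambda \otimes \id_\calH)$ and apply the right-integral equation \eqref{E:right_integral_element}, which rewrites $\mu \circ (\Lambda \otimes \id_\calH)$ as $\Lambda \circ \varepsilon$. This turns the left-hand side of \eqref{E:Hennings_element} into $\Lambda \otimes ((\varepsilon \otimes \id_\calH) \circ \Delta \circ \Lambda)$, which collapses to $\Lambda \otimes \Lambda$ by the counit axiom. Observe that this uses neither centrality nor the braiding: the integral $\Lambda$ already sits on the left of the product, which is precisely the side on which \eqref{E:right_integral_element} acts.

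For the Hennings form equation \eqref{E:Hennings_form}, I would isolate the sub-morphism in which the right leg of the coproduct on the second input is closed off by $\lambda$, namely $(\id_\calH \otimes \lambda) \circ \Delta$, and rewrite it using the left-integral-form equation \eqref{E:left_integral_form} as $\eta \circ \lambda$. After this substitution the surviving strand is a unit $\eta$ fed into the product, which is removed by the unit axiom, and the two remaining forms factor as the right-hand side of \eqref{E:Hennings_form}. As before no braiding is invoked, because the two legs of $\Delta$ enter the product and $\lambda$ in exactly the order prescribed by \eqref{E:left_integral_form}. Both computations are short string-diagram manipulations; the only genuinely delicate point is the membership step, where one must confirm that the previous Lemma's proof of $\lambda \in \QC(\calH)$ goes through without the normalization assumption, and I expect this bookkeeping to be the main (if minor) obstacle, resolved by the scale-invariance of \eqref{E:qc} together with the fact that \eqref{E:Hennings_element} and \eqref{E:Hennings_form} are themselves manifestly normalization-free.
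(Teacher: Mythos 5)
Your proposal is correct and follows the same route as the paper, whose entire proof consists of the two one-line verifications you carry out: the Hennings element identity for $\Lambda$ collapses via Equation~\eqref{E:right_integral_element} and the counit axiom, and the Hennings form identity for $\lambda$ collapses via Equation~\eqref{E:left_integral_form} and the unit axiom. Your additional care about the membership step ($\Lambda \in \ACE(\calH)$, $\lambda \in \AQC(\calH)$) is more than the paper writes down; the only residual caveat is that your rescaling argument presupposes that $\lambda \circ \Lambda$ is an invertible scalar so that a normalized pair exists, which holds in the paper's intended setting $\calH = \calE$ but is not automatic for an arbitrary Hopf algebra in $\calC$.
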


\begin{proof}
 The fact that $\Lambda$ is a Hennings element of $\calH$ follows from Equation~\eqref{E:right_integral_element}, while the fact that $\lambda$ is a Hennings form on $\calH$ follows from Equation~\eqref{E:left_integral_form}. \qedhere
\end{proof}

Hennings elements yield Hennings forms, thanks to the following result.

\begin{proposition}\label{P:Hennings}
 If $\calH \in \calC$ is a Hopf algebra with a Hennings form $\varphi$, then the map defined by Equation~\eqref{E:Phi} restricts to a map $\Phi_\varphi : \AHE(\calH) \to \AHF(\calH)$.
\end{proposition}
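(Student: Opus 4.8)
The plan is to verify directly the three conditions defining a Hennings form for $\beta := \Phi_\varphi(z)$, where $z \in \AHE(\calH)$: that $\beta$ is a quantum character, that it is an\-ti\-pode-in\-var\-i\-ant, and that it satisfies Equation~\eqref{E:Hennings_form}. Throughout I would read $\beta$ through the graphical calculus as the form ``multiply by $z$, then apply $\varphi$''; since $z$ is central by Equation~\eqref{E:ac}, the side on which one multiplies is irrelevant, which is convenient. The first condition is essentially free: because $\AHE(\calH) \subset \ACE(\calH) \subset \CE(\calH)$ and $\AHF(\calH) \subset \AQC(\calH) \subset \QC(\calH)$, Proposition~\ref{P:ce_qc}(1) already gives $\beta = \Phi_\varphi(z) \in \QC(\calH)$.

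To upgrade $\beta$ to $\AQC(\calH)$, I would establish $\beta \circ S = \beta$. The key inputs are the an\-ti\-pode-in\-var\-i\-ance of $\varphi$ and of $z$ from Equation~\eqref{E:antipode-invariant}, the fact that $S$ is an anti-homomorphism (a Hopf algebra axiom from Section~\ref{S:algebras_modules}), and the centrality of $z$. Writing it in the $\mods{\calH}$ shorthand, the chain is $\varphi(zx) = \varphi(S(zx)) = \varphi(S(x)\,S(z)) = \varphi(S(x)\,z) = \varphi(z\,S(x))$, where the first step uses $\varphi \circ S = \varphi$, the third uses $S \circ z = z$, and the last uses Equation~\eqref{E:ac}. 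This reads as $\beta(x) = \beta(S(x))$, hence $\beta \in \AQC(\calH)$. For the Hennings form equation itself, I would start from the right-hand side $\beta(x)\beta(y)$ and transform it into the left-hand side by the sequence
\[
 \varphi(zx)\,\varphi(zy) = \varphi\!\big(zx\,(zy)_{(1)}\big)\,\varphi\!\big((zy)_{(2)}\big) = \varphi\!\big(zx\,z_{(1)}y_{(1)}\big)\,\varphi\!\big(z_{(2)}y_{(2)}\big) = \varphi\!\big(x\,zz_{(1)}y_{(1)}\big)\,\varphi\!\big(z_{(2)}y_{(2)}\big) = \varphi\!\big(x\,z\,y_{(1)}\big)\,\varphi\!\big(z\,y_{(2)}\big) = \varphi\!\big(zx\,y_{(1)}\big)\,\varphi\!\big(z\,y_{(2)}\big),
\]
which is $\beta(xy_{(1)})\beta(y_{(2)})$. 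Here the first equality is the Hennings form equation~\eqref{E:Hennings_form} for $\varphi$ (applied with $a = zx$, $b = zy$), the second expands $\Delta(zy)$ using that the coproduct is an algebra morphism, the third slides the standalone central $z$ past $x$ via Equation~\eqref{E:ac} so as to bring $z\,z_{(1)}$ together, the fourth applies the Hennings element equation~\eqref{E:Hennings_element} in the form $zz_{(1)} \otimes z_{(2)} = z \otimes z$, and the last slides $z$ back.

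The main obstacle is that the displayed computation is written as though $\calC = \Vect$, whereas the actual statement lives in a genuinely braided category, so every step must be realized as an equality of morphisms in the graphical calculus. Concretely, the expansion of $\Delta(zy)$ by the braided bialgebra axiom introduces a braiding between $z_{(2)}$ and $y_{(1)}$, and the slides of $z$ and of the factors $z_{(1)}, z_{(2)}$ past the remaining strands involve crossings that the naive symbolic manipulation hides. The point is that these braidings are controlled precisely by the centrality~\eqref{E:ac} of $z$ together with the auxiliary two-sided identities~\eqref{E:Hennings_two-sided} and the symmetry relation~\eqref{E:symmetry}, which let the (antipode-invariant, central) element $z$ and its coproduct be transported through the diagram without cost; the only genuinely structural moves used remain Equations~\eqref{E:Hennings_form} and~\eqref{E:Hennings_element}. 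I therefore expect the write-up to consist of the two short arguments above rendered as explicit diagrammatic equalities, with the bulk of the care going into justifying the crossings in the central equality chain.
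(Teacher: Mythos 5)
Your proposal is correct and follows essentially the same route as the paper: the Hennings form equation for $\Phi_\varphi(z)$ is verified by the chain that rewrites $\varphi(zx)\varphi(zy)$ via Equation~\eqref{E:Hennings_form} for $\varphi$, the braided expansion of $\Delta$ applied to a product, centrality of $z$, and Equation~\eqref{E:Hennings_element}, with the crossing introduced by $\Delta(zy)$ evaporating by naturality once the Hennings element relation turns the $z_{(2)}$-strand into a full copy of $z$ emanating from the tensor unit. Your explicit treatment of the quantum-character and antipode-invariance conditions (via Proposition~\ref{P:ce_qc} and the centrality-rescued computation $\varphi(zx)=\varphi(S(x)z)=\varphi(zS(x))$, which is needed precisely because $\Phi_\varphi$ does not preserve antipode-invariance in general) is a welcome complement to the paper's diagrammatic write-up.
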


We postpone the proof of Proposition~\ref{P:Hennings} to Appendix~\ref{A:proof_Hennings}. 

\begin{definition}\label{D:Bobtcheva-Messia_element}
 An an\-ti\-pode-in\-var\-i\-ant central element $w \in \ACE(\calH)$ of a Hopf algebra $\calH \in \calC$ is a \textit{Bobtcheva--Messia element} if there exists a Hennings form $\varphi \in \AHF(\calH)$ that satisfies
 \begin{equation}\label{E:Bobtcheva-Messia_element}
  \pic{Bobtcheva-Messia_element}
 \end{equation}
\end{definition}

In this case, we say the Bobtcheva--Messia element $w$ and the Hennings form $\varphi$ are \textit{compatible} with each other. We denote by $\ABM(\calH) \subset \ACE(\calH)$ the set of Bobtcheva--Messia elements of $\calH$. An example of a Bob\-tchev\-a--Mes\-sia element is the unit $\eta \in \ABM(\calH)$, since the counit $\varepsilon \in \AHF(\calH)$ is a compatible Hennings form. Notice that a Bobtcheva--Messia element $w$ of $\calH$, being central, also satisfies
\begin{equation}\label{E:Bobtcheva-Messia_symmetric}
 \pic{Bobtcheva-Messia_symmetric}
\end{equation}

\begin{lemma}\label{L:integral_BM_pair}
 If $\calH \in \calC$ is a Hopf algebra with a normalized two-sided integral element $\Lambda$ and two-sided integral form $\lambda$, then $\Lambda \in \ABM(\calH)$ is compatible with $\lambda \in \AHF(\calH)$.
\end{lemma}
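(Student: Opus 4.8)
The plan is to reduce everything to the single compatibility condition, since the two memberships demanded by Definition~\ref{D:Bobtcheva-Messia_element} are already in hand. First I would record that $\Lambda \in \ACE(\calH)$ and $\lambda \in \AHF(\calH)$: both follow from the preceding Lemma, which asserts $\Lambda \in \AHE(\calH) \subseteq \ACE(\calH)$ and $\lambda \in \AHF(\calH)$ whenever $\Lambda$ is a two-sided integral element and $\lambda$ a two-sided integral form. In particular $\lambda$ is an admissible candidate for the Hennings form appearing in Definition~\ref{D:Bobtcheva-Messia_element}, so the only thing left to verify is that the pair $(\Lambda,\lambda)$ satisfies Equation~\eqref{E:Bobtcheva-Messia_element}.

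Reading Equation~\eqref{E:Bobtcheva-Messia_element} with $w = \Lambda$ and $\varphi = \lambda$, the morphism to be computed is $\lambda \circ \mu \circ (\Lambda \otimes \id_\calH) : \calH \to \one$, and the assertion is that it coincides with the counit $\varepsilon$. The calculation takes two moves. First I would apply the right integral element property~\eqref{E:right_integral_element}, in the form $\mu \circ (\Lambda \otimes \id_\calH) = \Lambda \circ \varepsilon$, to rewrite the composite as $\lambda \circ \Lambda \circ \varepsilon$. Then I would invoke the normalization~\eqref{E:integral_normalization}, that is $\lambda \circ \Lambda = \id_\one$, which collapses $\lambda \circ \Lambda \circ \varepsilon$ to $\varepsilon$. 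This is precisely Equation~\eqref{E:Bobtcheva-Messia_element}, whence $\Lambda \in \ABM(\calH)$ with $\lambda$ as a compatible Hennings form.

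There is essentially no obstacle: the argument is the diagrammatic transcription of the one-line identity $\lambda(\Lambda x) = \varepsilon(x)\,\lambda(\Lambda) = \varepsilon(x)$ valid in $\Vect$. The only point deserving a moment's attention is that neither move introduces a braiding isomorphism, so the equality holds verbatim in an arbitrary braided monoidal category $\calC$; both~\eqref{E:right_integral_element} and~\eqref{E:integral_normalization} are already stated in the planar form required, and with $w = \Lambda$ entering the product on the distinguished side, no naturality of the braiding is called upon.
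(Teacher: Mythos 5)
Your proposal is correct and follows the same route as the paper: the paper's own proof is the one-line observation that the claim follows from Equations~\eqref{E:right_integral_element} and \eqref{E:integral_normalization}, which is exactly the two-step computation you spell out ($\lambda \circ \mu \circ (\Lambda \otimes \id_\calH) = \lambda \circ \Lambda \circ \varepsilon = \varepsilon$), with the memberships $\Lambda \in \ACE(\calH)$ and $\lambda \in \AHF(\calH)$ supplied by the preceding lemmas as you note. Your remark that no braiding enters is a fair, if unstated in the paper, sanity check.
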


\begin{proof}
 The fact that $\Lambda$ is a Bobtcheva--Messia element of $\calH$ compatible with the Hennings form $\lambda$ on $\calH$ follows from Equations~\eqref{E:right_integral_element} and \eqref{E:integral_normalization}. \qedhere
\end{proof}

An an\-ti\-pode-in\-var\-i\-ant central element $e \in \ACE(\calH)$ of a Hopf algebra $\calH \in \calC$ is \textit{idempotent} if it satisfies
\begin{equation}\label{E:idempotent}
 \pic{idempotent}
\end{equation}
We denote by $\ACI(\calH) \subset \ACE(\calH)$ the set of an\-ti\-pode-in\-var\-i\-ant central idempotents of $\calH$. An example of an an\-ti\-pode-in\-var\-i\-ant central idempotent is the unit $\eta \in \ACI(\calH)$.

\begin{definition}\label{D:banded_unlink_form}
 A two-sided quantum character $\psi \in \AQC(\calH)$ on a Hopf algebra $\calH \in \calC$ is a \textit{banded unlink form} if there exist both a Bobtcheva--Messia element $w \in \ABM(\calH)$ and a compatible Hennings form $\varphi \in \AHF(\calH)$ that satisfy
 \begin{equation}\label{E:banded_unlink_form}
  \pic{banded_unlink_form}
 \end{equation}
\end{definition}

In this case, we say the banded unlink form $\psi$, the Bobtcheva--Messia element $w$, and the Hennings form $\varphi$ are all \textit{compatible} with each other. We denote by $\ABU(\calH) \subset \AQC(\calH)$ the set of banded unlink forms on $\calH$. If $w$ is a Bobtcheva--Messia element of $\calH$ and $\varphi$ is a compatible Hennings form on $\calH$, an example of a compatible banded unlink form is $\varphi \in \ABU(\calH)$. Notice that a banded unlink form $\varphi$ on $\calH$, being an\-ti\-pode-in\-var\-i\-ant, also satisfies
\begin{equation}\label{E:banded_unlink_form_two-sided}
 \pic{banded_unlink_form_two-sided}
\end{equation}
thanks to Equation~\eqref{E:symmetry}.

An\-ti\-pode-in\-var\-i\-ant central idempotents yield banded unlink forms, thanks to the following result.

\begin{proposition}\label{P:BU_form}
 If $\calH \in \calC$ is a Hopf algebra with a Bobtcheva--Messia element $w$ and a compatible Hennings form $\varphi$, then the map defined by Equation~\eqref{E:Phi} restricts to a map $\Phi_\varphi : \ACI(\calH) \to \ABU(\calH)$.
\end{proposition}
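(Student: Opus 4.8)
The plan is to show that $\psi := \Phi_\varphi(e)$ is a banded unlink form by producing, for Definition~\ref{D:banded_unlink_form}, an explicit compatible pair; the natural candidate is the \emph{given} pair $(w,\varphi)$ itself. Thus I must verify three things: that $\psi \in \AQC(\calH)$, and that the two identities of Equation~\eqref{E:banded_unlink_form} hold with this $\psi$ and the given $w$ and $\varphi$. Throughout I would carry out the computations in the Sweedler shorthand of Section~\ref{S:Hopf_algebras}, each step being the translation of a diagrammatic identity.

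For membership in $\AQC(\calH)$: since $e \in \ACI(\calH) \subset \ACE(\calH)$ is central and $\varphi \in \AHF(\calH) \subset \QC(\calH)$, Proposition~\ref{P:ce_qc} already yields $\psi = \Phi_\varphi(e) \in \QC(\calH)$, so only antipode-invariance remains. Although $\Phi_\varphi$ does not preserve antipode-invariance in general, as the discussion around Equation~\eqref{E:S_Phi_for_antipode_invariant} warns, it does so here precisely because $e$ is antipode-invariant: writing $\psi$ as $\varphi(e\,\cdot\,)$, one has $\psi(Sy) = \varphi(e\,Sy) = \varphi(S(ye)) = \varphi(ye) = \psi(y)$, using $Se = e$, that $S$ reverses products, $\varphi \circ S = \varphi$, and centrality of $e$. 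Diagrammatically I would feed the antipode-invariance of $e$ and of $\varphi$ into Equation~\eqref{E:S_Phi_for_antipode_invariant} so that the $S^2$-defect cancels.

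The first identity in Equation~\eqref{E:banded_unlink_form} is immediate and uses neither idempotency nor $w$: pushing the central idempotent $e$ to the front, its left-hand side is $\varphi(e x\, y_{(1)})\varphi(y_{(2)})$, which by the Hennings property of $\varphi$ (Equation~\eqref{E:Hennings_form}, applied with $ex$ in the first slot) equals $\varphi(ex)\varphi(y) = \psi(x)\varphi(y)$. So $\psi$ inherits $\varphi$-compatibility directly from $\varphi$.

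The second identity, $\psi(x w_{(1)})\psi(w_{(2)}) = \psi(x)$, is the crux and the step I expect to be the main obstacle. The natural first move is to apply the identity just proved with $y = w$; since compatibility \eqref{E:Bobtcheva-Messia_element} gives the normalization $\varphi \circ w = \id_{\one}$ (i.e.\ $\varphi(w)=1$), this produces $\psi(x w_{(1)})\varphi(w_{(2)}) = \psi(x)$. Comparing with the target, it then suffices to show that the second tensor leg may carry $\varphi$ in place of $\psi = \varphi(e\,\cdot\,)$, that is, that the extra idempotent on the second factor can be absorbed. This is exactly where antipode-invariance and idempotency of $e$ are indispensable: already the scalar case $e = c\,\eta$ forces $c^2 = c$, so $e^2 = e$ must be used, and the naive hope that $\psi$ be a Hennings form fails, since that would give $\psi(x)\,\varphi(ew) = \varepsilon(e)\,\psi(x)$, which is wrong once $\varepsilon(e)\neq 1$ (and complementary idempotents with $\varepsilon(e)=0$ do occur). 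I would establish the absorption diagrammatically, in the spirit of the deferred proof of Proposition~\ref{P:Hennings}: expand $\Delta(w)$, use centrality together with the antipode-invariance of $w$ from Equation~\eqref{E:antipode-invariant} (in the form $w_{(1)} \otimes w_{(2)} = S(w_{(2)}) \otimes S(w_{(1)})$) to bring the two insertions of $e$ adjacent, collapse them via the Hennings relation for $\varphi$ into a single $e^2 = e$, and close with the Bobtcheva--Messia normalization $\varphi \circ w = \id_{\one}$. Merging the two idempotents into one is the only genuinely delicate point; everything else is routine rewriting with the Hopf algebra axioms and the definitions recalled above.
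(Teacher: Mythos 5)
Your overall strategy coincides with the paper's: set $\psi := \Phi_\varphi(e)$ and verify Definition~\ref{D:banded_unlink_form} with the given pair $(w,\varphi)$ serving as the compatible data. Your verification of the first identity of Equation~\eqref{E:banded_unlink_form} is correct and is exactly the paper's one-picture ``swim'' computation; your treatment of membership in $\AQC(\calH)$ is fine; and your reduction of the second identity to absorbing the extra idempotent on the second tensor leg --- equivalently, to $\varphi(e\,x\,w_{(1)})\,\varphi((1-e)\,w_{(2)})=0$ --- is a sensible first step. Your sanity checks (that $\psi$ is generally not a Hennings form, and that idempotency must be used somewhere) are also correct.

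However, the recipe you give for the absorption step is not a proof, and I do not believe it can be turned into one as described. The two insertions of $e$ sit inside two \emph{different} evaluations of $\varphi$, joined only through the two legs of $\Delta(w)$. Each of the moves you invoke fails to bridge that gap: the antipode-invariance of $w$, in the form $w_{(1)}\otimes w_{(2)}=S(w_{(2)})\otimes S(w_{(1)})$, merely exchanges the two legs and returns an expression of the same shape with the decorations redistributed; centrality only permutes letters \emph{within} a single evaluation of $\varphi$; and the Hennings relation~\eqref{E:Hennings_form}, in either of its one-sided forms, only collapses a coproduct leg that appears \emph{unadorned} inside $\varphi$, which is never the case once both factors carry an $e$. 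So ``bringing the two insertions of $e$ adjacent'' so that $e^2=e$ can fire is precisely what your toolkit cannot accomplish: every attempt to apply the Hennings relation to $\varphi(e\,x\,w_{(1)})\,\varphi(e\,w_{(2)})$ is blocked by the decorated factor. This is exactly the point to which the paper devotes its four-step diagrammatic computation in Appendix~\ref{A:proof_BU_form}, and that computation is the real content of the proposition. For comparison, in the model case $w=\Lambda$, $\varphi=\lambda$ the identity follows from the Radford-inversion statement that $\lambda(e\,\Lambda_{(2)})\,\Lambda_{(1)}$ recovers $e$ itself, after which idempotency and centrality finish immediately --- an ingredient of a completely different nature from anything in your sketch, and one whose analogue for a general Bobtcheva--Messia pair is precisely what needs to be argued. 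As it stands, your proposal establishes everything except the one step you yourself identify as the crux.
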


Again, we postpone the proof of Proposition~\ref{P:BU_form} to Appendix~\ref{A:proof_BU_form}.

\subsection{Lyubashenko's ends}\label{S:ends}

Every finite rigid monoidal category $\calC$ admits an \textit{end}
\[
 \calE := \int_{X \in \calC} X \otimes X^* \in \calC,
\]
that is defined as the universal dinatural transformation with target
\begin{align*}
  (\_ \otimes \_^*) : \calC \times \calC^\op & \to \calC \\*
  (X,Y) & \mapsto X \otimes Y^*,
\end{align*}
see \cite[Sections~IX.4-IX.6]{M71} for the terminology. In particular, such an end is given by an object $\calE \in \calC$ together with a dinatural family of structure morphisms $i_X : \calE \to X \otimes X^*$ for every $X \in \calC$, meaning $(f \otimes \id_{X^*}) \circ i_X = (\id_Y \otimes f^*) \circ i_Y$ for every $f : X \to Y$. The end $\calE$ is unique up to isomorphism, and it can be described as a kernel, meaning that it fits into an exact sequence
\begin{equation}\label{E:end_kernel}
 \vcenter{\hbox{
 \begin{tikzpicture}[descr/.style={fill=white}] \everymath{\displaystyle}
  \node (P0) at (0,1) {$0$};
  \node (P1) at (2,1) {$\calE$};
  \node (P2) at (5,1) {\raisebox{-19pt}{$\bigoplus_{P \in \calP_\calC} P \otimes P^*$}};
  \node (P3) at (9,1) {\raisebox{-19pt}{$\bigoplus_{f \in \calB} T(f) \otimes S(f)^*$}};
  \draw
  (P0) edge[->] (P1)
  (P1) edge[right hook->] node[above] {\scriptsize $\iota$} (P2)
  (P2) edge[->] node[above] {\scriptsize $\delta$} (P3);
 \end{tikzpicture}
 }}
\end{equation}
of morphisms of $\calC$, where $\calP_\calC$ is a set of representatives of isomorphism classes of indecomposable projective objects of $\calC$, where $\calB_P^Q$ is a basis of $\calC(P,Q)$ for all $P,Q \in \calP_\calC$, where we set $S(f) := P$ and $T(f) := Q$ for every $f \in \calB_P^Q$ and
\begin{align}\label{E:bases_end_kernel}
 \calB_P &:= \bigcup_{Q \in \calP_\calC} \calB_P^Q, &
 \calB^Q &:= \bigcup_{P \in \calP_\calC} \calB_P^Q, &
 \calB &:= \bigcup_{P,Q \in \calP_\calC} \calB_P^Q,
\end{align}
and where
\begin{align}\label{E:iota_delta_end_kernel}
 \iota &:= (i_P)_{P \in \calP_\calC}, &
 \delta &:= (f \otimes \id_{P^*})_{\substack{f \in \calB_P \\ P \in \calP_\calC}} - 
 (\id_Q \otimes f^*)_{\substack{f \in \calB^Q \\ Q \in \calP_\calC}}.
\end{align}
Furthermore, if $\calC$ is a ribbon category, the universal property of the end uniquely determines the morphisms
\begin{align*}
 \pic{end_product_1} &:= \pic{end_product_2} &
 \pic{end_unit_1} &:= \pic{end_unit_2} \\*[10pt]
 \pic{end_coproduct_1} &:= \pic{end_coproduct_2} &
 \pic{end_counit_1} &:= \pic{end_counit_2} \\*[10pt]
 \pic{end_antipode_1} &:= \pic{end_antipode_2} &
 \pic{end_inverse_antipode_1} &:= \pic{end_inverse_antipode_2}
\end{align*}
which make $\calE$ into a Hopf algebra in $\calC$. Moreover, if $\calC$ is a unimodular ribbon category, then the end $\calE$ also admits an integral element $\Lambda : \one \to \calE$ and an integral form $\lambda : \calE \to \one$, which are unique up to scalar \cite[Propositions~3.1 \& 4.10]{BKLT97}, and we can always choose a normalized pair, whose existence is ensured by \cite[Theorem~3.3]{BKLT97}. Then, as shown in \cite[Proposition~7.3]{BD21}, these morphisms, together with
\begin{align*}
 \pic{end_ribbon_1} &:= \pic{end_ribbon_2} &
 \pic{end_inverse_ribbon_1} &:= \pic{end_inverse_ribbon_2}
\end{align*}
make $\calE$ into a BP Hopf algebra.

If $H$ is a finite-dimensional ribbon Hopf algebra and $\calC = \mods{H}$, then $\calE$ is the \textit{adjoint representation} $\myuline{H}$, which is given by the $\Bbbk$-vector space $H$ itself equipped with the adjoint action
\[
 x \triangleright y = x_{(1)}yS(x_{(2)})
\]
for all $x \in H$ and $y \in \myuline{H}$, with dinatural family of structure morphisms given by
\begin{align*}
 i_X : \myuline{H} &\to X \otimes X^* \\* 
 x &\mapsto \sum_{a=1}^n (x \cdot v_a) \otimes f^a
\end{align*}
for every $X \in \calC$, where $\{ v_a \in X | 1 \leqs a \leqs n \}$ and $\{ f^a \in X^* | 1 \leqs a \leqs n \}$ are dual bases for $X$ and $X^*$ respectively. The Hopf algebra structure on $\myuline{H}$ in $\mods{H}$ is given by the $H$-intertwiners
\begin{align*}
 \myuline{\mu}(x \otimes y) &= xy, &
 \myuline{\eta}(1) &= 1, \\*
 \myuline{\Delta}(x) &= x_{\myuline{(1)}} \otimes x_{\myuline{(2)}}
 = x_{(1)}S(R''_i) \otimes (R'_i \triangleright x_{(2)}), &
 \myuline{\varepsilon}(x) &= \varepsilon(x), \\*
 \myuline{S}(x) &= 
 R''_i S(R'_i \triangleright x), &
 \myuline{S^{-1}}(x) &= 
 S^{-1}(R'_i \triangleright x) R''_i.
\end{align*}

\begin{proposition}\label{P:native_to_transmutation}
 If $H$ is a unimodular ribbon Hopf algebra with normalized two-sided integral element $\Lambda \in H$ and left integral form $\lambda \in H^*$, then:
 \begin{enumerate}
  \item the map
   \begin{align*}
    T_\rmZ : \CE(H) &\to \CE(\myuline{H}) \\*
    z &\mapsto \myuline{z}
   \end{align*}
   defined by $\myuline{z}(1) = z$ is a linear isomorphism that restricts to a linear isomorphism $T_\rmZ : \ACE(H) \to \ACE(\myuline{H})$ and bijections $T_\rmZ : \AHE(H) \to \AHE(\myuline{H})$, $T_\rmZ : \ABM(H) \to \ABM(\myuline{H})$, and $T_\rmZ : \ACI(H) \to \ACI(\myuline{H})$;
  \item the map
   \begin{align*}
    T_\rmQ : \QC(H) &\to \QC(\myuline{H}) \\*
    \varphi &\mapsto \myuline{\varphi}
   \end{align*}
   defined by $\myuline{\varphi}(x) = \varphi(x)$ for every $x \in \myuline{H}$ is a linear isomorphism that restricts to a linear isomorphism $T_\rmZ : \BQC(H) \to \AQC(\myuline{H})$ and bijections $T_\rmQ : \BHF(H) \to \AHF(\myuline{H})$ and $T_\rmQ : \BBU(H) \to \ABU(\myuline{H})$;
  \item the an\-ti\-pode-in\-var\-i\-ant central element $\myuline{\Lambda} \in \ACE(\myuline{H})$ is a two-sided integral element of $\myuline{H}$;
  \item the two-sided quantum character $\myuline{\lambda} \in \AQC(\myuline{H})$ is a two-sided integral form on $\myuline{H}$.
 \end{enumerate}
\end{proposition}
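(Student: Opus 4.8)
The plan is to read Proposition~\ref{P:native_to_transmutation} as the assertion that Majid's transmutation dictionary carries each \emph{native} (i.e.\ $\Vect$-level) structure on $H$ recorded in Section~\ref{S:Hopf_algebras} to the corresponding \emph{categorical} structure on the braided Hopf algebra $\myuline{H} \in \calC$ of Definitions~\ref{D:Hennings_element}, \ref{D:Hennings_form}, \ref{D:Bobtcheva-Messia_element} and~\ref{D:banded_unlink_form}. Since $T_\rmZ$ and $T_\rmQ$ act as the identity on underlying linear data, the whole proof reduces to checking, class by class, that each categorical defining equation is equivalent to its native counterpart; the $\Vect$-level equations of Section~\ref{S:Hopf_algebras} were in fact set up precisely as the expected transmuted versions.

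First I would establish the two basic identifications. A morphism $\one \to \myuline{H}$ in $\calC$ is an $H$-linear map $\Bbbk \to \myuline{H}$, i.e.\ a vector $z \in H$ fixed by the adjoint action, and the invariance $x_{(1)} z S(x_{(2)}) = \varepsilon(x) z$ is equivalent to $z \in \CE(H)$; conversely the braided centrality condition~\eqref{E:ac} for $\myuline z$ reduces to ordinary commutativity in $\myuline{H}$, because the braiding with the tensor unit $\one$ is trivial and $\myuline\mu = \mu$. Hence $\calC(\one,\myuline{H}) = \CE(\myuline{H}) = \CE(H)$ and $T_\rmZ$ is a linear isomorphism. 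Dually, a morphism $\myuline{H} \to \one$ is an $\mathrm{ad}$-invariant form, and a one-line computation with the antipode axioms shows that every left quantum character $\varphi \in \QC(H)$ (Equation~\eqref{E:quantum_character_Vect}) satisfies $\varphi(x_{(1)} y S(x_{(2)})) = \varepsilon(x)\varphi(y)$, so that $\myuline\varphi$ is a genuine morphism of $\calC$; linearity and injectivity of $T_\rmQ$ are then immediate.

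Next I would dispatch the decorations and structural equations \emph{not} involving the coproduct. The braided left-quantum-character equation~\eqref{E:qc} for $\myuline\varphi$ unwinds, using $\myuline\mu = \mu$, the explicit $\myuline S$, and the braiding of $\calC$, to the native condition~\eqref{E:quantum_character_Vect}, so that $T_\rmQ : \QC(H) \to \QC(\myuline{H})$ is a linear isomorphism. For $z$ central one has $R'_i \triangleright z = \varepsilon(R'_i) z$, whence $\myuline S(z) = R''_i S(R'_i \triangleright z) = \varepsilon(R'_i) R''_i\, S(z) = S(z)$ using $(\varepsilon \otimes \id)(R) = 1$; therefore categorical antipode-invariance~\eqref{E:antipode-invariant} of $\myuline z$ is equivalent to $S(z) = z$, giving the bijection $T_\rmZ : \ACE(H) \to \ACE(\myuline{H})$. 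The idempotent equation~\eqref{E:idempotent} is literally $e^2 = e$ since $\myuline\mu = \mu$, and the Bobtcheva--Messia equation~\eqref{E:Bobtcheva-Messia_element} unwinds, via $\myuline\mu = \mu$ and $\myuline\varepsilon = \varepsilon$, to $\varphi(wx) = \varepsilon(x)$, i.e.\ \eqref{E:Bobtcheva-Messia_element_Vect}; so $T_\rmZ$ restricts to bijections on $\ACI$ and $\ABM$. For forms, antipode-invariance $\myuline\varphi \circ \myuline S = \myuline\varphi$ does not collapse as cleanly, but a short computation with the explicit $\myuline S$ and the pivotal element $g = u v_-$ identifies it with the balancedness condition~\eqref{E:antipode-invariant_Vect}, yielding $T_\rmQ : \BQC(H) \to \AQC(\myuline{H})$.

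The technical heart, which I expect to be the main obstacle, is the family of equations that genuinely see the braided coproduct $\myuline\Delta(x) = x_{(1)} S(R''_i) \otimes (R'_i \triangleright x_{(2)})$: the Hennings element~\eqref{E:Hennings_element}, the Hennings form~\eqref{E:Hennings_form}, the banded unlink form~\eqref{E:banded_unlink_form}, and the left integral form equation~\eqref{E:left_integral_form}. For each I would substitute this formula, expand the adjoint action as $R'_i \triangleright x_{(2)} = (R'_i)_{(1)} x_{(2)} S((R'_i)_{(2)})$, and then show that the resulting R-matrix corrections collapse—using the quasitriangularity axioms together with centrality (for $z$ and $w$) and $\mathrm{ad}$-invariance (for $\varphi$, $\psi$, $\lambda$)—leaving exactly the native identities~\eqref{E:Hennings_element_Vect}, \eqref{E:Hennings_form_Vect}, \eqref{E:banded_unlink_form_Vect} and~\eqref{E:left_integral_form_Vect}. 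These are the steps most cleanly carried out in the graphical calculus, invoking the structural identities of \cite{BBDP23}, and they establish the remaining bijections $T_\rmZ : \AHE(H) \to \AHE(\myuline{H})$, $T_\rmQ : \AHF(H) \to \AHF(\myuline{H})$, and $T_\rmQ : \BBU(H) \to \ABU(\myuline{H})$. Parts~(3) and~(4) then follow by assembly: since $\Lambda \in \ACE(H)$ is a two-sided integral element, part~(1) gives $\myuline\Lambda \in \ACE(\myuline{H})$ and the categorical right-integral equation~\eqref{E:right_integral_element} reduces to~\eqref{E:right_integral_element_Vect}; since $\lambda$ is balanced, part~(2) gives $\myuline\lambda \in \AQC(\myuline{H})$ and the coproduct computation above turns the categorical left-integral equation into~\eqref{E:left_integral_form_Vect}. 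Normalization transfers verbatim, as $\myuline\lambda(\myuline\Lambda) = \lambda(\Lambda) = 1$ and $\myuline\lambda(\myuline S(\myuline\Lambda)) = \lambda(S(\Lambda)) = 1$ using $\myuline S(\Lambda) = S(\Lambda)$, so that $\myuline\Lambda$ and $\myuline\lambda$ are the required two-sided integrals on $\myuline{H}$.
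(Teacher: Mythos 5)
Your proposal is correct and matches the paper's proof in all essentials: both identify morphisms $\one \to \myuline{H}$ (resp.\ $\myuline{H} \to \one$) with central elements (resp.\ ad-invariant forms, i.e.\ left quantum characters), verify the antipode-invariance/balancedness dictionary via $\myuline{S}(z)=S(z)$ for central $z$ and the Drinfeld-element computation with $g = uv_-$, and collapse the R-matrix corrections in the coproduct-dependent equations using $(\varepsilon\otimes\id_H)(R)=1$ together with centrality and ad-invariance. The only cosmetic difference is that the paper obtains bijectivity of $T_\rmQ$ by factoring it as $\Phi_{\myuline{\lambda}} \circ T_\rmZ \circ \Psi_\Lambda$ through the Radford maps, whereas you argue directly that the categorical quantum-character equation unwinds to the native one.
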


\begin{proof}
 First of all, if $z \in H$ is a central element, then $\myuline{z} : \Bbbk \to \myuline{H}$ is an $H$-intertwiner, because
 \[
  x \triangleright z
  = x_{(1)} z S(x_{(2)})
  = x_{(1)} S(x_{(2)}) z
  = \varepsilon(x) z
 \]
 for every $x \in H$. Furthermore, a central element $z \in H$ is an\-ti\-pode-in\-var\-i\-ant if and only if $\myuline{z} : \Bbbk \to \myuline{H}$ is an\-ti\-pode-in\-var\-i\-ant, because
 \begin{align*}
  \myuline{S}(z)
  = R''_i S(R'_i \triangleright z)
  = \varepsilon(R'_i) R''_i S(z)
  = S(z). 
 \end{align*}
 
 Similarly, if $\varphi \in H^*$ is a left quantum character, then $\myuline{\varphi} : \myuline{H} \to \Bbbk$ is an $H$-intertwiner, because
 \[
  \varphi(x \triangleright y)
  = \varphi(x_{(1)} y S(x_{(2)}))
  = \varphi(y S(x_{(2)}) S^2(x_{(1)}))
  = \varphi(y S(S(x_{(1)}) x_{(2)}))
  = \varepsilon(x) \varphi(y)
 \]
 for all $x \in H$ and $y \in \myuline{H}$. Furthermore, a left quantum character $\varphi \in H^*$ is balanced if and only if $\myuline{\varphi} : \Bbbk \to \myuline{H}$ is an\-ti\-pode-in\-var\-i\-ant, because
 \begin{align}\label{E:Drinfeld_ribbon}
  u^{-1} &= R''_i S^2(R'_i), &
  S(v) &= v
 \end{align}
 implies
 \begin{align*}
  \varphi(\myuline{S}(x))
  &= \varphi(R''_i S(R'_i \triangleright x))
  = \varphi(R''_j R''_k S(R'_j x S(R'_k))) 
  = \varphi(R''_j R''_k S^2(R'_k) S(x) S(R'_j)) \\*
  &= \varphi(R''_k S^2(R'_k) S(x) S(R'_j) S^2(R''_j)) 
  = \varphi(R''_k S^2(R'_k) S(x) S(S(R''_j) R'_j)) \\*
  &= \varphi(u^{-1} S(x) S(u))
  = \varphi(S(x) S(u) S^2(u^{-1}))
  = \varphi(S(S(u^{-1}) u x)) \\*
  &= \varphi(S(S(g^{-1} v^{-1}) g v x))
  = \varphi(S(g v^{-1} g v x))
  = \varphi(S(g^2 x))
 \end{align*}
 for every $x \in \myuline{H}$.
 
 The fact that $\myuline{\Lambda}$ remains a right integral element of $\myuline{H}$ follows from the fact that the product and the counit of $\myuline{H}$ coincide with those of $H$. This proves point~$(iii)$.
 
 The fact that $\myuline{\lambda}$ remains a left integral form on $\myuline{H}$ follows from the fact that 
 \begin{equation}\label{E:counit_R-matrix}
  \varepsilon(R'_i) R''_i = 1 = \varepsilon(R''_i) R'_i,
 \end{equation}
 because
 \begin{align*}
  \lambda(x_{\myuline{(2)}}) x_{\myuline{(1)}}
  = \lambda(R'_i \triangleright x_{(2)}) x_{(1)}S(R''_i) 
  = \varepsilon(R'_i) \lambda(x_{(2)}) x_{(1)}S(R''_i) 
  = \lambda(x_{(2)}) x_{(1)}
 \end{align*}
 for every $x \in \myuline{H}$. This proves point~$(iv)$.
 
 The fact that $\myuline{z}$ is a central element of $\myuline{H}$ if and only if $z$ is a central element of $H$ follows from the fact that the product of $\myuline{H}$ coincides with that of $H$. Therefore, both $T_\rmZ$ and $T_\rmQ := \Phi_{\myuline{\lambda}} \circ T_\rmZ \circ \Psi_\Lambda$ are linear isomorphisms, where $\Psi_\Lambda$ is defined by Equation~\eqref{E:Radford_inverse_Vect} and $\Phi_{\myuline{\lambda}}$ is defined by Equation~\eqref{E:Phi}. If $z$ is a Hennings element of $H$, then $\myuline{z}$ is a Hennings element of $\myuline{H}$, because
 \begin{align*}
  z z_{\myuline{(1)}} \otimes z_{\myuline{(2)}}  
  &= z z_{(1)} S(R''_i) \otimes R'_i \triangleright z_{(2)}
  = z S(R''_i) \otimes R'_i \triangleright z \\*
  &= \varepsilon(R'_i) z S(R''_i) \otimes z
  = z \otimes z.
 \end{align*}
 Similarly, if $\myuline{z}$ is a Hennings element of $\myuline{H}$, then $z$ is a Hennings element of $H$, because
 \begin{align*}
  z z_{(1)} \otimes z_{(2)}  
  &= z z_{\myuline{(1)}} S(R''_i) \otimes S(R'_i) \triangleright z_{\myuline{(2)}}
  = z S(R''_i) \otimes S(R'_i) \triangleright z \\*
  &= \varepsilon(S(R'_i)) z S(R''_i) \otimes z
  = z \otimes z.
 \end{align*}
 Next, if $\varphi$ and $\psi$ are left quantum characters on $H$, then
 \begin{align*}
  \psi(x y_{\myuline{(1)}}) \varphi(y_{\myuline{(2)}}) 
  &= \psi(x y_{(1)} S(R''_i)) \varphi(R'_i \triangleright y_{(2)})
  = \varepsilon(R'_i) \psi(x y_{(1)} S(R''_i)) \varphi(y_{(2)}) \\*
  &= \psi(x y_{(1)}) \varphi(y_{(2)})
 \end{align*}
 for all $x,y \in H$. In particualr, $\varphi$ is a left Hennings form on $H$ if and only if $\myuline{\varphi}$ is a Hennings form on $\myuline{H}$. Therefore, since the product and the counit of $\myuline{H}$ coincide with those of $H$, then $w$ is a Bobtcheva--Messia element of $H$ if and only if $\myuline{w}$ is a Bobtcheva--Messia element of $\myuline{H}$, and the same goes for an\-ti\-pode-in\-var\-i\-ant central idempotents and banded unlink forms. This completes the proofs of points~$(i)$ and $(ii)$. \qedhere
\end{proof}

\subsection{Ideals and traces}\label{S:ideals_traces}

Following \cite[Definition~3.1.1]{GKP10}, an \textit{ideal} $\calI \subset \calC$ of a braided monoidal category $\calC$ is a full subcategory that is \textit{closed under retracts}, meaning that $X \in \calI$ and $f \circ g = \id_Y$ for $f : X \to Y$ and $g : Y \to X$ implies $Y \in \calI$, and \textit{absorbent under tensor products}, meaning that $X \in \calI$ and $Y \in \calC$ implies $X \otimes Y \in \calI$. Notice that, since closure under retracts implies repleteness, which is closure under isomorphisms, and since $\calC$ is braided, we automatically have that $X \in \calC$ and $Y \in \calI$ implies $X \otimes Y \in \calI$ too. The ideal $\Proj(\calC)$ of projective objects of $\calC$ is the smallest non-zero ideal of $\calC$, in the sense that, if $\calI \subset \calC$ is a non-zero ideal, then $\Proj(\calC) \subset \calI$, see \cite[Section~4.4]{GKP10}. If $\calC$ is semisimple, then $\Proj(\calC) = \calC$.

\begin{definition}\label{D:trace}
 A \textit{trace} $\rmt$ on an ideal $\calI \subset \calC$ of a ribbon category $\calC$ is a family of linear maps
 \[
  \{ \rmt_X : \End_\calC(X) \to \Bbbk \mid X \in \calI \}
 \]
 satisfying:
 \begin{enumerate}
  \item \textit{Cyclicity}: $\rmt_X(g \circ f) = \rmt_Y(f \circ g)$ for all objects $X,Y \in \calI$ and all morphisms $f \in \calC(X,Y)$ and $g \in \calC(Y,X)$;
  \item \textit{Partial trace}: $\rmt_{X \otimes Y}(f) = \rmt_X(\ptr_\rmR(f))$ for all objects $X \in \calI$ and $Y \in \calC$ and every morphism $f \in \End_\calC(X \otimes Y)$, where $\ptr_\rmR(f) \in \End_\calC(X)$ is defined as
   \[
    \ptr_\rmR(f) := \pic{trace_partial_trace_2}
   \]
 \end{enumerate}
\end{definition}

Notice that, in a ribbon category $\calC$, a trace $\rmt$ on an ideal $\calI \subset \calC$ automatically satisfies $\rmt_{X \otimes Y}(f) = \rmt_Y(\ptr_\rmL(f))$ for all objects $X \in \calC$ and $Y \in \calI$ and every morphism $f \in \End_\calC(X \otimes Y)$, where $\ptr_\rmL(f) \in \End_\calC(Y)$ is defined as
\[
 \ptr_\rmL(f) := \pic{trace_partial_trace_3}
\]
We say a trace $\rmt$ on $\calI$ is \textit{non-degenerate} if, for all $X \in \calI$ and $Y \in \calC$, the bilinear pairing 
\begin{align*}
 \rmt_X(\_ \circ \_) : \calC(Y,X) \times \calC(X,Y) \to \Bbbk \\*
 (g,f) \mapsto \rmt_X(g \circ f)
\end{align*}
is non-degenerate. If $\calC$ is a unimodular ribbon category, then there exists a non-zero trace $\rmt$ on $\Proj(\calC)$, which is unique up to scalar and furthermore non-degenerate, as follows from \cite[Corollary~3.2.1]{GKP11} and \cite[Proposition~4.2]{GR17}. By contrast, the standard categorical trace $\tr_\calC$ vanishes on $\Proj(\calC)$ whenever $\calC$ is non-semisimple.

\subsection{Banded unlink modules}\label{S:banded_unlink_modules}

Let $\calC$ be a monoidal category. If $\calA \in \calC$ is an algebra, then a \textit{left $\calA$-module} is an object $\calV \in \calC$ equipped with a \textit{left action} $\rho : \calA \otimes \calV \to \calV$ satisfying
\[
 \pic{axioms_left_module}
\]
Similarly, a \textit{right $\calA$-module} is an object $\calW \in \calC$ equipped with a \textit{right action} $\sigma : \calW \otimes \calA \to \calW$ satisfying
\[
 \pic{axioms_right_module}
\]
If $\calB \in \calC$ is a coalgebra, then a \textit{left $\calB$-comodule} is an object $\calX \in \calC$ equipped with a \textit{left coaction} $\tau : \calX \to \calB \otimes \calX$ satisfying
\[
 \pic{axioms_left_comodule}
\]
Similarly, a \textit{right $\calB$-comodule} is an object $\calY \in \calC$ equipped with a \textit{right coaction} $\upsilon : \calY \to \calB \otimes \calY$ satisfying
\[
 \pic{axioms_right_comodule}
\]
Notice that, if $\calC$ is rigid, then every left $\calA$-module $\calV \in \calC$ and every left $\calB$-comodule $\calX \in \calC$ determine a right $\calA$-module structure on $\calV^* \in \calC$ and a right $\calB$-comodule structure on $\calW^* \in \calC$ given by the right action $\rho^\rmM : \calV^* \otimes \calA \to \calV^*$ and the right coaction $\tau^\rmM : \calX^* \to \calX^* \otimes \calB$ defined as
\[
 \pic{axioms_duality_left-right}
\]
Notice also that every Frobenius algebra $\calF \in \calC$ is self-dual, with duality morphisms given by the \textit{pairing} $\beta : \calF \otimes \calF \to \one$ and the \textit{copairing} $b : \one \to \calF \otimes \calF$ defined by
\[
 \pic{axioms_Frobenius_co-pairing}
\]
Indeed, pairing and copairing satisfy
\[
 \pic{axioms_Frobenius_zig-zag}
\]
and they swap algebra and coalgebra structure. If $\calF \in \calC$ is a Frobenius algebra, then every left $\calF$-module $\calV \in \calC$ and every right $\calF$-module $\calW \in \calC$ determine a left $\calF$-comodule structure on $\calV \in \calC$ and a right $\calF$-comodule structure on $\calW \in \calC$ given by the left coaction $\rho^\rmA : \calV \to \calF \otimes \calV$ and the right coaction $\sigma^\rmA : \calW \to \calW \otimes \calF$ defined as
\[
 \pic{axioms_duality_module-comodule}
\]
A left $\calF$-module $\calV \in \calC$ is \textit{rigid} if $\rho^{\rmM \rmA} = \rho^{\rmA \rmM}$. If a left $\calF$-module $\calV \in \calC$ is rigid, we denote by $\rho^\vee : \calV^* \to \calV^* \otimes \calF$ the right $\calF$-coaction defined by
\begin{equation}\label{E:rigid_module}
 \pic{axioms_rigid_module}
\end{equation}

Let now $\calC$ be a ribbon category. A Frobenius algebra $\calF \in \calC$ is \textit{ribbon} if
\begin{equation}\label{E:Frobenius_ribbon}
 \pic{axioms_Frobenius_ribbon}
\end{equation}
For instance, a symmetric Frobenius algebra $\calF \in \calC$ is always ribbon, where \textit{symmetric} means that the pairing $\beta : \calF \otimes \calF \to \one$ satisfies
\[
 \pic{axioms_Frobenius_symmetric}
\]
Notice that, if $\calF \in \calC$ is a ribbon Frobenius algebra, then every left $\calF$-module $\calV \in \calC$ is rigid. If $\calF \in \calC$ is a ribbon Frobenius algebra, the \textit{band morphism} $\calB : \calV^* \otimes \calV \to \calV^* \otimes \calV$ of a left $\calF$-module $\calV \in \calC$ is defined by
\begin{equation}\label{E:band_morphism}
 \pic{axioms_band_morphism}
\end{equation}
Notice that the band morphism $\calB : \calV^* \otimes \calV \to \calV^* \otimes \calV$ is self-dual, in the sense that $\calB^* = \calB$. 

\begin{definition}\label{D:banded_unlink_modules}
 A left $\calF$-module $\calV \in \calC$ over a ribbon Frobenius algebra $\calF \in \calC$ is a \textit{banded unlink module} if its band morphism $\calB : \calV^* \otimes \calV \to \calV^* \otimes \calV$ satisfies:
 \begin{enumerate}
  \item \textit{Transparency}: $\calB$ is transparent;
  \item \textit{Partial trace}: $\ptr_\rmL(\calB) = \id_\calV$.
 \end{enumerate}
\end{definition}

Notice that $\ptr_\rmL(\calB) = \id_\calV$ automatically implies $\ptr_\rmR(\calB) = \id_{\calV^*}$, because 
\[
 \ptr_\rmR(\calB) = \ptr_\rmL(\calB^*)^* = \ptr_\rmL(\calB)^* = \id_\calV^* = \id_{\calV^*}.
\]

\section{Quantum invariants}\label{S:Q-inv}

In this section, we introduce Kirby graphs and their functorial invariants, in the same spirit of 
%the Re\-she\-ti\-khin--Tur\-aev functor of 
\cite[Theorem~2.5]{T94}, of \cite[Proposition~2.5]{DGP17}, and of \cite[Proposition~3.1]{DGGPR19}. These are used to define quantum invariants of ribbon surfaces.

\subsection{Labeled Kirby graphs and their diagrams}\label{S:labeled_Kirby_graphs}

If $\calC$ is a ribbon category, the \textit{category $\calR_\calC$ of $\calC$-labeled ribbon graphs} is the ribbon category whose objects sre finite sequences $(\myuline{\varepsilon},\myuline{V}) = ((\varepsilon_1,V_1),\ldots,(\varepsilon_m,V_m))$ composed of a sign $\varepsilon_j \in \{ +,- \}$ and of an object $V_j \in \calC$ for every integer $1 \leqs j \leqs m$, and whose morphisms are isotopy classes, relative to the boundary, of $\calC$-labeled ribbon graphs $G \subset [0,1]^{\times 3}$, which are graphs whose edges are framed, oriented, and labeled by objects of $\calC$, and whose inner vertices, called \textit{coupons}, have specified incoming and outgoing bases, and are labeled by morphisms of $\calC$, see \cite[Section~I.2.3]{T94} for more details. Composition is induced by first gluing vertically two copies of $[0,1]^{\times 3}$, and then shrinking the result into a single copy of $[0,1]^{\times 3}$. Tensor product is induced by first gluing horizontally two copies of $[0,1]^{\times 3}$, and then shrinking the result into a single copy of $[0,1]^{\times 3}$. Braidings are given by positive crossings, while twists are given by positive kinks.

If $\calC$ is a ribbon category, a \textit{$\calC$-labeled Kirby link} is a Kirby link $L = L_1 \cup L_2$ whose dotted components are labeled by an\-ti\-pode-in\-var\-i\-ant central elements of $\calE$, and whose undotted components are labeled by two-sided quantum characters on $\calE$. A \textit{$\calC$-labeled Kirby graph} $L \cup G$ is the union of a $\calC$-labeled Kirby link $L$ and of a disjoint $\calC$-labeled ribbon graph $G$.

\begin{definition}\label{D:Kirby_graph}
 The \textit{category $\calK_\calC$ of $\calC$-labeled Kirby graphs} is the ribbon category whose objects are $(\myuline{\varepsilon},\myuline{V}) \in \calR_\calC$, and whose morphisms are isotopy classes, relative to the boundary, of $\calC$-labeled Kirby graphs $T_1 \cup G \subset [0,1]^{\times 3}$. Composition and tensor product are defined as those of $\calR_\calC$, and the two ribbon structures coincide.
\end{definition}

The $\calC$-labeled Kirby link $L$ will be drawn in purple and green, while the $\calC$-labeled ribbon graph $G$ will be drawn in black. Therefore, a $\calC$-labeled Kirby graph $L \cup G \subset [0,1]^{\times 3}$ will sometimes be called a \textit{black} graph, if $L = \varnothing$. The category of $\calC$-labeled ribbon graphs $\calR_\calC$ can be identified with the subcategory of black graphs of $\calK_\calC$ .

\begin{definition}
 A \textit{diagram} of a $\calC$-labeled Kirby graph $L \cup G \subset [0,1]^{\times 3}$ is a generic projection of $L \cup G$ to $[0,1]^{\times 2}$ without tangency points or triple points, equipped with a crossing state for every double point, and mapping the framing of $L_2$ and $G$ to the blackboard framing. A diagram is \textit{regular} if the image of the Kirby tangle $L$ is a regular diagram.
\end{definition}

Again, by abuse of notation, a regular diagram of a $\calC$-labeled Kirby graph $L \cup G \subset [0,1]^{\times 3}$ will also be denoted by $L \cup G \subset [0,1]^{\times 2}$. 

\subsection{Kerler--Lyubashenko--Reshetikhin--Turaev functor}\label{S:KLRT_functor}

If $\calC$ is a unimodular ribbon category, then let us explain how to extend the Reshetikhin--Turaev functor $F_\calC : \calR_\cat \to \calC$ of \cite[Theorem~I.2.5]{T94} to a functor $F_\calC : \calK_\calC \to \calC$. First of all, notice that, since the objects of $\calK_\calC$ coincide with those of $\calR_\calC$, then $F_\calC$ is already defined on objects, so we only need to define it on morphisms. Therefore, let us consider a $\calC$-labeled Kirby link $L = L_1 \cup L_2 \subset [0,1]^{\times 3}$, and let us consider a disjoint $\calC$-labeled ribbon graph $G \subset [0,1]^{\times 3}$ from $(\myuline{\varepsilon},\myuline{V})$ to $(\myuline{\varepsilon'},\myuline{V'})$. Let $k$ and $k'$ be the number of components of $L_1$ and of $L_2$, respectively, and let $\ell_1, \ldots, \ell_k$ be the number of geometric intersection points between $L_2 \cup G$ and a family of Seifert disks for the components of $L_1$. A \textit{bottom-top presentation} $\tilde{L} \cup \tilde{G} \subset [0,1]^{\times 3}$ of the $\calC$-labeled Kirby graph $L \cup G$ is the union of a green tangle $\tilde{L}$ without closed components and of a $\calC$-colored ribbon graph $\tilde{G}$ such that 
\begin{equation}\label{E:bottom-top_presentation}
 \pic{KLRT_functor_1}
\end{equation}
Notice that the strands of $L_2 \cup G$ that intersect the Seifert disks of $L_1$ can be either green or black, and all configurations can occur, so this picture simply illustrates an example. A \textit{chain} is a set of strands of $\tilde{L}$ whose elements are all contained in the same component of $L$, and a \textit{cycle} is a maximal chain with respect to inclusion. A bottom-top presentation $\tilde{L} \cup \tilde{G}$ of the $\calC$-labeled Kirby graph $L \cup G$ induces a $k'$-dinatural transformation
\[
 \eta_{\tilde{L} \cup \tilde{G}} : F_\calC(\myuline{\varepsilon},\myuline{V}) \overset{\cdot}{\Rightarrow} (\_ \otimes \_^*)^{\otimes k'} \otimes F_\calC(\myuline{\varepsilon'},\myuline{V'}),
\]
where $F_\calC(\myuline{\varepsilon},\myuline{V}) : (\calC \times \calC^\op)^{\times k'} \to \calC$ denotes the constant functor sending every object $(X_1,Y_1,\ldots,X_{k'},Y_{k'}) \in (\calC \times \calC^\op)^{\times k'}$ to the object $F_\calC(\myuline{\varepsilon},\myuline{V}) \in \calC$, where
\[
 (\_ \otimes \_^*)^{\otimes k'} \otimes F_\calC(\myuline{\varepsilon'},\myuline{V'}) : (\calC \times \calC^\op)^{\times k'} \to \calC
\]
sends every object $(X_1,Y_1,\ldots,X_{k'},Y_{k'}) \in (\calC \times \calC^\op)^{\times k'}$ to the object
\[
 \bigotimes_{j=1}^{k'} (X_j \otimes Y_j^*) \otimes F_\calC(\myuline{\varepsilon'},\myuline{V'}) \in \calC,
\]
and where $k'$-dinaturality means that $\eta_{\tilde{L} \cup \tilde{G}}$ defines a dinatural transformation 
\[
 \eta_{\tilde{L} \cup \tilde{G}} : F_\calC(\myuline{\varepsilon},\myuline{V}) \circ \sigma \din \left( (\_ \otimes \_^*)^{\otimes k'} \otimes F_\calC(\myuline{\varepsilon'},\myuline{V'}) \right) \circ \sigma
\]
for the permutation functor 
\[
 \sigma : \calC^{\times k'} \times (\calC^{\times k'})^\op \to (\calC \times \calC^\op)^{\times k'}
\]
sending every object $((X_1,\ldots,X_{k'}),(Y_1,\ldots,Y_{k'})) \in \calC^{\times k'} \times (\calC^{\times k'})^\op$ to the object $(X_1,Y_1,\ldots,X_{k'},Y_{k'}) \in (\calC \times \calC^\op)^{\times k'}$. In order to explain how $\eta_{\tilde{L} \cup \tilde{G}}$ is defined, let us consider, for every object $(X_1, \ldots, X_{k'}) \in \calC^{\times k'}$, the $\calC$-labeled ribbon graph $\tilde{L}_{X_1,\ldots,X_{k'}} \cup \tilde{G}$ obtained from the bottom-top presentation $\tilde{L} \cup \tilde{G}$ of $L \cup G$ by orienting the $j$th cycle of $\tilde{L}$ from the $(2j)$th to the $(2j-1)$th outgoing boundary point and labeling it by $X_j$ for every $1 \leqs j \leqs k'$, as shown.
\begin{equation}\label{E:dinatural_transformation}
 \pic{KLRT_functor_2}
\end{equation}
This determines uniquely objects $S_j := S_j(\tilde{L}_{X_1,\ldots,X_{k'}} \cup \tilde{G}) \in \calC$ for every $1 \leqs j \leqs k$ such that
\[
 F_\calC(\tilde{L}_{X_1,\ldots,X_{k'}} \cup \tilde{G}) \in \calC \left( 
 \bigotimes_{j=1}^k (S_j \otimes S_j^*) \otimes F_\calC(\myuline{\varepsilon},\myuline{V}),
 \bigotimes_{j=1}^{k'} (X_j \otimes X_j^*) \otimes F_\calC(\myuline{\varepsilon'},\myuline{V'}) \right).
\]
Notice that, up to skein equivalence, we can assume that every component of $L_1$ is linked to a single strand of the $\calC$-colored ribbon graph $L_{2,X_1,\ldots,X_{k'}} \cup G$, since
\begin{equation}\label{E:skein_equivalence}
 \pic{KLRT_functor_3}
\end{equation}
Then, $\eta_{\tilde{L} \cup \tilde{G}}$ associates with every object $(X_1, \ldots, X_{k'}) \in \calC^{\times n}$ the morphism
\[
 (\eta_{\tilde{L} \cup \tilde{G}})_{X_1,\ldots,X_{k'}} := F_\calC(\tilde{L}_{X_1,\ldots,X_{k'}} \cup \tilde{G}) \circ \left( \bigotimes_{j=1}^k i_{S_j} \otimes \id_{F_\calC(\myuline{\varepsilon},\myuline{V})} \right).
\]
The universal property defining the end $\calE$ implies that the object $\calE^{\otimes k'} \otimes F_\calC(\myuline{\varepsilon'},\myuline{V'})$, equipped with the dinatural family of structure morphisms
\[
 \bigotimes_{j=1}^{k'} i_{X_j} \otimes \id_{F_\calC(\myuline{\varepsilon'},\myuline{V'})}
\]
for every $(X_1,\ldots,X_{k'}) \in \calC^{\times k'}$, is the end of the functor 
\[
 \left( (\_ \otimes \_^*)^{\otimes k'} \otimes F_\calC(\myuline{\varepsilon'},\myuline{V'}) \right) \circ \sigma.
\]
This determines a unique morphism 
\[
 f_\calC(\eta_{\tilde{L} \cup \tilde{G}}) \in \calC(\calE^{\otimes k} \otimes F_\calC(\myuline{\varepsilon},\myuline{V}),\calE^{\otimes k'} \otimes F_\calC(\myuline{\varepsilon'},\myuline{V'}))
\]
satisfying
\begin{equation}\label{E:universal_property}
 \pic{KLRT_functor_4}
\end{equation}
Then we define $F_\calC(L \cup G) : F_\calC(\myuline{\varepsilon},\myuline{V}) \to F_\calC(\myuline{\varepsilon'},\myuline{V'})$ as
\begin{equation}\label{E:KLRT_functor}
 \pic{KLRT_functor_5}
\end{equation}

\begin{proposition}\label{P:KLRT_functor}
 Let $\calC$ be a unimodular ribbon category. Then $F_\calC : \calK_\calC \to \calC$ is a well-defined ribbon functor.
\end{proposition}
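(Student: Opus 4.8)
The plan is to establish in turn that $F_\calC$ is well defined on morphisms, that it respects composition and identities, and that it intertwines the braided monoidal and ribbon structures. Since the universal property of the end $\calE$ uniquely determines $f_\calC(\eta_{\tilde{L} \cup \tilde{G}})$ from the $k'$-dinatural transformation $\eta_{\tilde{L} \cup \tilde{G}}$, the entire content of well-definedness is that the morphism produced by Equation~\eqref{E:KLRT_functor} is independent of the auxiliary data entering its construction: the Seifert disks chosen for $L_1$, the points at which the cycles of $\tilde{L}$ are opened, and the isotopy representative of $L \cup G$. Here the two families of labels play complementary roles: the dotted labels $z_j \in \ACE(\calE)$ are fed into the $\calE$-inputs coming from the structure morphisms $i_{S_j}$, while the undotted labels $\varphi_j \in \AQC(\calE)$ are applied to the $\calE$-outputs produced by the universal property.

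First I would reduce independence to invariance under a finite generating set of local moves relating any two bottom-top presentations of a fixed isotopy class, following the strategy used for bichrome graphs in \cite{DGP17} and \cite{DGGPR19}. Moves that alter only the underlying ribbon tangle $\tilde{L} \cup \tilde{G}$ without changing the cut pattern are absorbed by the invariance of the classical Reshetikhin--Turaev functor $F_\calC : \calR_\calC \to \calC$ of \cite[Theorem~I.2.5]{T94}, together with the dinaturality $(f \otimes \id_{X^*}) \circ i_X = (\id_Y \otimes f^*) \circ i_Y$ of the structure morphisms, which transports a label freely across a Seifert disk and thereby also handles changes of Seifert disks.

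The main obstacle is the remaining family of moves, which slide a cutting point along a component of $L$ and so change the cycle exposed to the end. For an undotted component this compares two presentations whose dinatural transformations differ by a cyclic rearrangement; since the component is labeled by a two-sided quantum character $\varphi_j \in \AQC(\calE)$, the required identity is precisely Equation~\eqref{E:qc} (equivalently the symmetric form \eqref{E:qc_symmetric}), the categorical incarnation of the twisted trace property, with the antipode-invariance \eqref{E:antipode-invariant} of $\varphi_j$ absorbing any orientation reversal. For a dotted component, labeled by an antipode-invariant central element $z_j \in \ACE(\calE)$, the analogous move slides a strand past the label, and invariance follows from centrality \eqref{E:ac} together with \eqref{E:antipode-invariant}; the skein identity \eqref{E:skein_equivalence} first normalizes each dotted component to link a single strand, so these local verifications are exhaustive. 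As $z_j$ and $\varphi_j$ satisfy exactly the equations demanded by these moves, the morphism is independent of the presentation; isotopy invariance then follows because an ambient isotopy of $L \cup G$ carries any bottom-top presentation to one of the isotopic graph.

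Finally I would check functoriality and compatibility with the ribbon structure. Identities are immediate, as the identity of $\calK_\calC$ is represented by parallel vertical strands with empty Kirby link, whose bottom-top presentation is trivial. For a vertical composite $G' \circ G$, a bottom-top presentation is obtained by concatenating presentations of $G$ and $G'$ with compatible cuts; the universal property of $\calE$ then splits $f_\calC$ as the composite of the two induced dinatural transformations, giving $F_\calC(G' \circ G) = F_\calC(G') \circ F_\calC(G)$. Since the tensor product, braiding, and twist of $\calK_\calC$ all act entirely within the ribbon-graph part and coincide with those of $\calR_\calC$, their preservation is inherited verbatim from the fact that $F_\calC : \calR_\calC \to \calC$ is already a ribbon functor.
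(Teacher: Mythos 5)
Your proposal follows essentially the same route as the paper's proof in Appendix~\ref{A:proof_KLRT}: reduce well-definedness to independence of the auxiliary choices made in constructing a bottom-top presentation, absorb the tangle-level moves into dinaturality of the structure morphisms $i_X$ and naturality of the braiding and twist, handle the sliding of cutting points via centrality of the dotted labels and the two-sided quantum character property of the undotted labels, invoke antipode-invariance for orientation reversals, and inherit the braiding and twist from the restriction to $\calR_\calC$. The paper simply makes the list of choices fully explicit (framings and isotopy classes of the connecting arcs, numbering, orientations, and basepoints of the components of $L$) and verifies each item diagrammatically, and it normalizes each dotted component to link a single strand via the skein identity just as you do.

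One small correction: your justification of monoidality --- that the tensor product of $\calK_\calC$ ``acts entirely within the ribbon-graph part'' and is therefore inherited verbatim from $\calR_\calC$ --- is not accurate. The horizontal juxtaposition of two morphisms each carrying a nonempty Kirby link produces a Kirby link containing the components of both, so one must check that the juxtaposed bottom-top presentations can be rearranged into a single presentation in such a way that the universal property factors $f_\calC$ as a tensor product of the two induced dinatural transformations; this is exactly the same manipulation you describe for vertical composition, and the paper carries it out in a separate ``Monoidality'' paragraph. Only the preservation of the braiding and twist, which are genuinely black graphs, comes for free from the fact that $F_\calC$ restricts to the Reshetikhin--Turaev ribbon functor on $\calR_\calC$.
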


The proof of this statement is completely analogous to the one of \cite[Proposition~3.1]{DGGPR19}, although there are two main differences: first, we change our conventions, since we work with the end 
\[
 \calE = \int_{X \in \calC} X \otimes X^*
\]
instead of the coend 
\[
 \calL = \int^{X \in \calC} X^* \otimes X,
\]
which means, for instance, that we need to replace bottom tangles with top tangles; furthermore, we need to check a few more conditions, because the purple dotted unlink $L_1$ is treated differently from the green undotted link $L_2$. Although these changes are minor, we still give a detailed proof, for the sake of completeness. However, since the argument is quite long, we postpone it to Appendix~\ref{A:proof_KLRT}. We call $F_\calC : \calK_\calC \to \calC$ the \textit{Kerler--Lyubashenko--Reshetikhin--Turaev functor} associated with the ribbon category $\calC$.

\subsection{\texorpdfstring{$2$-Deformation invariants of $4$-dimensional $2$-handlebodies from Bob\-tchev\-a--Mes\-sia elements and Hennings forms}{2-Deformation invariants of 4-dimensional 2-handlebodies from Bobtcheva--Messia elements and Hennings forms}}\label{S:Bobtcheva-Messia}

Let $\calC$ be a unimodular ribbon category.

\begin{proposition}\label{P:canceling_pairs}    
 If a $\calC$-labeled Kirby link $L' = L'_1 \cup L'_2 \subset [0,1]^{\times 3}$ is obtained from a $\calC$-labeled Kirby link $L = L_1 \cup L_2 \subset [0,1]^{\times 3}$ by removing a canceling pair of components of $L_1$ and $L_2$ labeled by a Bobtcheva--Messia element of $\calE$ and by a compatible Hennings form on $\calE$, respectively, then
 \[
  F_\calC(L) = F_\calC(L').
 \]
\end{proposition}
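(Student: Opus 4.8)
The plan is to exploit the functoriality of $F_\calC$ established in Proposition~\ref{P:KLRT_functor} in order to reduce the statement to a purely local computation. Since a canceling pair occupies a ball in $[0,1]^{\times 3}$ and the diagrams $L$ and $L'$ agree outside of it, it suffices to exhibit an elementary $\calC$-labeled Kirby tangle $T$ representing the canceling pair together with the strands of $L_2$ that thread the dotted component, and to show that $F_\calC(T) = F_\calC(T')$, where $T'$ is obtained from $T$ by deleting the pair and letting the threading strands run straight. By cutting every component open, I would pass to a bottom-top presentation in which the dotted $w$-circle contributes an input leg colored by $\calE$, to be fed the Bobtcheva--Messia element $w : \one \to \calE$, and the undotted $\varphi$-circle contributes an output leg, to be read by the compatible Hennings form $\varphi : \calE \to \one$, while the threading strands become cut cycles to be colored by arbitrary objects $X$ of $\calC$.

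First I would compute $f_\calC(\eta_{\tilde{T}})$ through its defining universal property \eqref{E:universal_property}: for each choice of colors on the cut cycles I evaluate the resulting genuinely $\calC$-labeled ribbon graph with the ordinary Reshetikhin--Turaev functor, and then invoke dinaturality. In the local model the undotted circle is a meridian that clasps the dotted circle and encircles the threading bundle, so the double braiding produced by the clasp, once the structure morphism $i_X$ is inserted, should put the evaluation into a normal form in which the composite of feeding $w$ on the input leg and reading $\varphi$ on the output leg is precisely the assignment $x \mapsto \varphi(w x)$ on $\calE$. Here the centrality \eqref{E:ac} and antipode-invariance of $w$, together with the two-sided quantum character property \eqref{E:qc} of $\varphi$, are what let me slide the clasp into this symmetric position irrespective of the orientations of the threading strands.

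Having reached this normal form, the defining equation \eqref{E:Bobtcheva-Messia_element} of a compatible pair, namely $\varphi(w\, \cdot) = \varepsilon$, collapses the operator to the counit of $\calE$. Feeding the counit through the end structure trivializes the encircling, since the counit composed with the structure morphisms recovers the canonical evaluation, so the threading strands pass straight through and we recover precisely $F_\calC(T')$. When several strands thread the dotted component simultaneously, I would use the Hennings-form property \eqref{E:Hennings_form} to distribute the coproduct across them and recombine the several $\varphi$-readings into the single evaluation to which \eqref{E:Bobtcheva-Messia_element} applies.

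I expect the main obstacle to be the second step: verifying that the diagrammatic normal form of the clasp is exactly $x \mapsto \varphi(w x)$, with no residual twist, pivotal element, or braiding correction surviving. This is where the precise bookkeeping of the bottom-top presentation, the insertion of $i_X$, and the interplay of antipode-invariance, centrality, and the two-sidedness of $\varphi$ all have to be reconciled. Once this normal form is secured, the collapse via \eqref{E:Bobtcheva-Messia_element} and the resulting passage to $F_\calC(T')$ are immediate.
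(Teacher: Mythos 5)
Your overall strategy --- localize the move, pass to a bottom-top presentation, compute through the universal property of the end, and collapse the result with the compatibility relation \eqref{E:Bobtcheva-Messia_element} --- is the same as the paper's, and your identification of $\varphi \circ \mu \circ (w \otimes \id_{\calE}) = \varepsilon$ as the punchline is correct. The gap is in the local model. You set up the canceling pair as a dotted $w$-labeled circle threaded by the undotted $\varphi$-labeled circle \emph{and by further strands of $L_2$}, which are supposed to ``run straight'' after the removal. In that configuration the dotted component acts on the bundle passing through its Seifert disk via the \emph{coproduct} of $\calE$: the element $w$ gets distributed over the threading strands by the iterated coproduct, only one leg of which is read by $\varphi$, so the operator you actually reach is $(\varphi \otimes \id_{\calE}) \circ \Delta \circ w$ acting on the remaining strands, not $x \mapsto \varphi(wx)$. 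The identity you would then need is $(\varphi \otimes \id_{\calE}) \circ \Delta \circ w = \eta$, which is not \eqref{E:Bobtcheva-Messia_element} and does not follow from it. The configuration in which \eqref{E:Bobtcheva-Messia_element} applies verbatim --- and the one the paper treats, cf.\ the phrase ``canceling pairs of green $\varphi$-labeled components with purple $w$-labeled meridians'' in the proof of Theorem~\ref{T:ribbon_surface_invariant_quantum_character} --- is the one where the dotted $w$-labeled circle is a \emph{meridian} of the undotted $\varphi$-labeled component and nothing else passes through its Seifert disk; the free variable $x$ in $\varphi(wx) = \varepsilon(x)$ then records everything else that undotted component does (crossings, framing, passages through other dotted components), collected along the cycle by the \emph{product} of $\calE$, and $\varepsilon(x)$ is exactly the evaluation of the diagram with that component erased.

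Relatedly, your final paragraph's appeal to the Hennings-form identity \eqref{E:Hennings_form} to ``recombine several $\varphi$-readings'' is a non sequitur: a canceling pair contains exactly one $\varphi$-labeled component, so there is only ever one $\varphi$-reading, and \eqref{E:Hennings_form} is the handle-slide identity (it is what drives Proposition~\ref{P:slides_swims}), not a cancellation identity. If you want to allow extra strands through the dotted circle, the correct route is to treat that as a derived move: first slide those strands off over the canceling $0$-framed $2$-handle, using Propositions~\ref{P:slides_swims} and \ref{P:arbitrary_slides}, and only then erase the pair.
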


Since the proof of Proposition~\ref{P:canceling_pairs} is very similar in spirit to the proof of Proposition~\ref{P:KLRT_functor}, we postpone it to Appendix~\ref{A:proof_canceling_pairs}.

\begin{proposition}\label{P:slides_swims}    
 If a $\calC$-labeled Kirby link $L' = L'_1 \cup L'_2 \subset [0,1]^{\times 3}$ is obtained from a $\calC$-labeled Kirby link $L = L_1 \cup L_2 \subset [0,1]^{\times 3}$ by sliding a component of $L_2$ labeled by a banded unlink form on $\calE$ over another component of $L_2$ labeled by a compatible Hennings form on $\calE$, then
 \[
  F_\calC(L) = F_\calC(L').
 \]
\end{proposition}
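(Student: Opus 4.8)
The plan is to follow the template used for Proposition~\ref{P:KLRT_functor} and Proposition~\ref{P:canceling_pairs}: localize the handle slide and reduce it, through the universal property of the end $\calE$, to the defining identity of the banded unlink form. Since $L$ and $L'$ coincide outside the ball in which the band-sum realizing the slide is performed, I would first choose a bottom-top presentation $\tilde{L}$ of $L$ adapted to the move, arranging that the component of $L_2$ labeled by the banded unlink form $\psi$ and the one labeled by the compatible Hennings form $\varphi$ each appear as a single cycle, and that the slide proceeds along a framed pushoff of the $\varphi$-labeled cycle that is band-summed into the $\psi$-labeled cycle. Up to isotopy I may assume this band avoids the Seifert disks of $L_1$, so that only the two undotted cycles enter the local computation.

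Next, by the defining relation \eqref{E:universal_property} together with the universal property of $\calE$, the equality $F_\calC(L) = F_\calC(L')$ reduces to a local identity of morphisms in $\calC$ that can be read off from the dinatural transformations attached in \eqref{E:dinatural_transformation}. In the graphical calculus on the Hopf algebra $\calE$, the effect of the slide is to double the $\varphi$-labeled cycle using the coproduct $\Delta$ of $\calE$: the pushoff band-summed into the $\psi$-labeled cycle carries one tensor factor, while the $\varphi$-labeled cycle retains the other. Writing $x$ for the content accumulated on the $\psi$-labeled cycle from its remaining linkings and $y$ for that of the $\varphi$-labeled cycle, the slid configuration evaluates to $\psi$ applied to the product of $x$ with the first coproduct factor of $y$, times $\varphi$ applied to the second factor, whereas the unslid configuration evaluates to the product of $\psi$ on $x$ and $\varphi$ on $y$. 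These are identified by the first identity in \eqref{E:banded_unlink_form}, which holds because $\psi$ is a banded unlink form compatible with $\varphi$; observe that this identity generalizes the Hennings form condition \eqref{E:Hennings_form}, recovered when $\psi = \varphi$.

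I expect the main obstacle to be the diagrammatic bookkeeping that justifies that the slide introduces exactly the coproduct $\Delta$ of $\calE$, with its two factors distributed between the pushoff and the original cycle as described. This requires tracking orientations and framings through the bottom-top presentation (recall that the slide follows the framing), together with the dinaturality of the family $i_X$ and the skein equivalence \eqref{E:skein_equivalence}, exactly as in the proof of Proposition~\ref{P:KLRT_functor}. Once this local normal form is established, the appeal to \eqref{E:banded_unlink_form} is immediate, so I would present the topological reduction in detail and defer the remaining naturality verifications to the appendix alongside the proof of Proposition~\ref{P:KLRT_functor}.
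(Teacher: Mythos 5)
Your proposal matches the paper's own argument: the paper likewise passes to a bottom-top presentation in which the slide is localized, uses the universal property of $\calE$ to translate the band-summed pushoff into the coproduct $\Delta$ of $\calE$ acting on the content of the $\varphi$-labeled cycle, and then concludes by the first identity of Equation~\eqref{E:banded_unlink_form}. The only difference is presentational — the paper carries out the diagrammatic bookkeeping you defer by exhibiting the relevant pictures explicitly in Appendix~\ref{A:proof_slides_swims}.
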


Once again, the proof of Proposition~\ref{P:slides_swims} is postponed to Appendix~\ref{A:proof_slides_swims}. Given a Kirby link $L = L_1 \cup L_2 \subset [0,1]^{\times 3}$, a Bobtcheva--Messia element $w$ of $\calE$, and a compatible Hennings form $\varphi$ on $\calE$, we denote by $L_{w,\varphi} = L_{1,w} \cup L_{2,\varphi}$ the $\calC$-labeled Kirby link obtained by labeling every component of $L_1$ by $w$ and every component of $L_2$ by $\varphi$.

\begin{theorem}\label{T:Bobtcheva-Messia_invariant}
 Let $w$ be a Bobtcheva--Messia element of $\calE$ and let $\varphi$ be a compatible Hennings form on $\calE$. If $W = W(L)$ is a \dmnsnl{4} \hndlbd{2}, then
 \[
  J_\calC(W) := F_\calC(L_{w,\varphi})
 \]
 is an invariant of $W$ up to \qvlnc{2}.
\end{theorem}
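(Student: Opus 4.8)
The plan is to reduce the statement to a finite check against the generating moves of \dfrmtns{2}. By Proposition~\ref{P:2-qvlnc}, two regular Kirby link diagrams present \qvlnt{2} \dmnsnl{4} \hndlbds{2} if and only if they are related by a finite sequence of the four operations in Definition~\ref{D:2-dfrmtn}. Since each such operation alters the diagram but not the labeling scheme---every dotted component keeps the label $w$ and every undotted component keeps the label $\varphi$---it suffices to verify that $F_\calC(L_{w,\varphi})$ is unchanged under each one, after which well-definedness of $J_\calC(W)$ follows at once.

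First I would dispatch the two topological moves. Isotopies in $[0,1]^{\times 2}$ and regular Reidemeister moves both realize an ambient isotopy of the underlying $\calC$-labeled Kirby link in $[0,1]^{\times 3}$, hence fix its isotopy class and therefore the morphism it represents in $\calK_\calC$. Since $F_\calC$ is a well-defined functor on $\calK_\calC$ by Proposition~\ref{P:KLRT_functor}, its value is preserved. The creation/removal of a canceling pair of a dotted and an undotted component is then exactly the content of Proposition~\ref{P:canceling_pairs}: the new dotted strand is labeled by the Bobtcheva--Messia element $w$ and the new undotted strand by the compatible Hennings form $\varphi$, which is precisely the hypothesis of that proposition.

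The remaining move---a slide of an undotted component over another undotted component---is handled by Proposition~\ref{P:slides_swims}. The subtlety I would flag here is that this proposition is phrased for a component labeled by a \emph{banded unlink form} sliding over one labeled by a compatible Hennings form, whereas in $L_{w,\varphi}$ both components carry the same label $\varphi$. This is resolved by the remark following Definition~\ref{D:banded_unlink_form}: a Hennings form compatible with $w$ is automatically a banded unlink form compatible with itself, so $\varphi \in \ABU(\calE)$ and Proposition~\ref{P:slides_swims} applies verbatim, reading the top component as labeled by the banded unlink form $\varphi$ and the bottom one by the compatible Hennings form $\varphi$. As the chain-pair move is a consequence of those already treated, this exhausts Definition~\ref{D:2-dfrmtn}. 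I expect the main obstacle to lie entirely upstream, in Propositions~\ref{P:canceling_pairs} and \ref{P:slides_swims}, whose proofs rest on the defining identities of $w$ and $\varphi$ together with the universal property of the end $\calE$; the present argument is then only a matter of matching each move to the correct input, the sole genuine point being the reinterpretation of $\varphi$ as a banded unlink form in the slide case.
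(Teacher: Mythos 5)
Your proposal is correct and follows essentially the same route as the paper, which simply observes that the theorem follows from Propositions~\ref{P:canceling_pairs} and \ref{P:slides_swims} after reducing to the generating moves of Definition~\ref{D:2-dfrmtn} via Proposition~\ref{P:2-qvlnc}. The one subtlety you flag---that the slide move requires reading $\varphi$ as a banded unlink form compatible with itself and with $w$---is exactly how the paper handles it, via the remark after Definition~\ref{D:banded_unlink_form} that $\varphi \in \ABU(\calE)$ whenever $\varphi$ is a Hennings form compatible with a Bobtcheva--Messia element $w$.
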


The proof of Theorem~\ref{T:Bobtcheva-Messia_invariant} follows immediately from Propositions~\ref{P:canceling_pairs} and \ref{P:slides_swims}. We call $J_\calC$ the \textit{Bobtcheva--Messia invariant of \dmnsnl{4} \hndlbds{2} associated with $w$ and $\varphi$}. Indeed, when $\calC = \mods{H}$ for a unimodular ribbon Hopf algebra $H$, then $J_\calC$ coincides with the invariant of \cite[Theorem~2.14]{BM02}. 

\begin{proposition}\label{P:dichromatic}
 Every unimodular ribbon subcategory $\calC' \subset \calC$ induces a Hopf epimorphism $\pi : \calE \to \calE'$ in $\calC$, and if $\Lambda$ is a two-sided integral element of $\calE$ and $\lambda'$ is a two-sided integral form on $\calE'$, then $\pi \circ \Lambda$ is a two-sided integral element of $\calE'$ and $\lambda' \circ \pi$ is a Hennings form on $\calE$.
\end{proposition}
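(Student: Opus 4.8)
The plan is to first construct $\pi$ from the universal property of the end, then verify it is a morphism of Hopf algebras by a uniqueness argument, establish that it is an epimorphism (the crux), and finally deduce the two assertions formally. To build $\pi$, write $i_X:\calE\to X\otimes X^*$ and $i'_X:\calE'\to X\otimes X^*$ for the universal dinatural families of $\calE$ and of $\calE'=\int_{X\in\calC'}X\otimes X^*$, and observe that the subfamily $\{i_X\}_{X\in\calC'}$ is dinatural over $\calC'$; the universal property of $\calE'$ then yields a unique $\pi:\calE\to\calE'$ with $i'_X\circ\pi=i_X$ for every $X\in\calC'$. Since the structure morphisms $\mu,\eta,\Delta,\varepsilon,S$ of $\calE$ and of $\calE'$ are defined by identical universal formulas in terms of the $i_X$ and the $i'_X$ respectively, as displayed in Section~\ref{S:ends}, each compatibility follows by uniqueness: for the algebra maps one checks that $\pi\circ\mu_\calE$ and $\mu_{\calE'}\circ(\pi\otimes\pi)$ become equal after composing with every $i'_X$, and similarly for $\eta$; for the coalgebra maps one uses that $\calE'\otimes\calE'$, equipped with the family $i'_X\otimes i'_Y$, is the corresponding double end, so that $(\pi\otimes\pi)\circ\Delta_\calE$ and $\Delta_{\calE'}\circ\pi$ agree after composing with every $i'_X\otimes i'_Y$, and likewise $\varepsilon_{\calE'}\circ\pi=\varepsilon_\calE$ and $S_{\calE'}\circ\pi=\pi\circ S_\calE$. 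Thus $\pi$ is a Hopf algebra morphism in $\calC$.

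The hard part is to show that $\pi$ is an epimorphism; all the end-theoretic input concerns morphisms \emph{into} $\calE$ and $\calE'$, so surjectivity is not formal and I expect this to be the main obstacle. My plan is to pass to the dual coend picture: under the canonical identification $\calE\cong\calL^*$ (and $\calE'\cong(\calL')^*$) of the end with the dual of the coend $\calL=\int^{X}X^*\otimes X$, with $i_X$ dual to the universal family $j_X:X^*\otimes X\to\calL$, the morphism $\pi$ is identified with the dual $\kappa^*$ of the comparison morphism $\kappa:\calL'\to\calL$ determined by $\kappa\circ j'_X=j_X$ for $X\in\calC'$. Because dualization is exact and sends monomorphisms to epimorphisms in the rigid category $\calC$, it then suffices to prove that $\kappa$ is a monomorphism. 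I would establish this either by realizing $\calL'$ as the sub-coend cut out by the full inclusion $\calC'\subset\calC$, or, failing a clean abstract argument, by transporting to the model $\calC=\mods{H}$, where $\calE\cong\myuline{H}$ and $\pi$ becomes the explicit surjection onto $\calE'$ attached to $\calC'$ in the sense of \cite{BD21}.

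Granting that $\pi$ is a Hopf epimorphism, the two consequences follow. For the integral element, I would show that $\mu_{\calE'}\circ((\pi\circ\Lambda)\otimes\id_{\calE'})$ and $(\pi\circ\Lambda)\circ\varepsilon_{\calE'}$ coincide: precomposing both with the epimorphism $\pi$ and using that $\pi$ is simultaneously an algebra and a coalgebra map reduces the identity to the right-integral equation for $\Lambda$ on $\calE$, so equality holds after cancelling the epimorphism $\pi$; antipode-invariance of $\pi\circ\Lambda$ comes from $S_{\calE'}\circ\pi=\pi\circ S_\calE$, and nonvanishing from the standard fact that a surjective Hopf morphism carries the integral to a nonzero integral, so that $\pi\circ\Lambda$ is a two-sided integral element of $\calE'$. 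For the Hennings form, note first that the two-sided integral form $\lambda'$ is already a Hennings form on $\calE'$ by the Lemma preceding Proposition~\ref{P:Hennings}; then $\lambda'\circ\pi$ is a two-sided quantum character on $\calE$ because $\pi$ commutes with $\mu$ and $S$, and pulling the defining equation \eqref{E:Hennings_form} back along $\pi$—using only that $\pi$ respects $\mu$ and $\Delta$—turns the Hennings identity for $\lambda'$ on $\calE'$ into the Hennings identity for $\lambda'\circ\pi$ on $\calE$. I emphasize that this second consequence requires only that $\pi$ is a bialgebra morphism, whereas the first genuinely uses surjectivity.
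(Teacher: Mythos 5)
Your construction of $\pi$ from the universal property of $\calE'$, the verification that it is a Hopf algebra morphism by composing with the structure morphisms $i'_{X'}$ and invoking uniqueness, and the deduction of the two consequences (cancelling the epimorphism $\pi$ for the integral element, pulling the Hennings identity back along the bialgebra morphism $\pi$ for $\lambda'\circ\pi$) all match the paper's proof. The gap is exactly where you predicted it: you never actually prove that $\pi$ is an epimorphism. Passing to the coend $\calL$ and reducing to injectivity of the comparison morphism $\kappa:\calL'\to\calL$ is only a restatement of the problem by duality, and neither of your fallbacks closes it: ``realizing $\calL'$ as the sub-coend cut out by the full inclusion'' is precisely the assertion to be proved, and transporting to $\calC=\mods{H}$ covers only that special case (and even there you appeal to \cite{BD21} rather than argue). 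The dualization in fact loses traction: in the cokernel presentation of the coend, the comparison map on the term indexed by the basis morphisms is an inclusion of direct summands rather than an epimorphism, so the Four Lemma in the form needed to produce a monomorphism does not apply on that side.

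The paper instead works directly with the presentation of the end as a kernel, Equation~\eqref{E:end_kernel}: since $\calP_{\calC'}\subseteq\calP_\calC$, the middle comparison map $\bigoplus_{P\in\calP_\calC}P\otimes P^*\to\bigoplus_{P'\in\calP_{\calC'}}P'\otimes(P')^*$ is the projection onto a subset of direct summands, hence an epimorphism, and the Four Lemma applied to the resulting commutative diagram with exact rows forces $\pi$ to be an epimorphism. This is the one genuinely non-formal input of the proposition and the piece your proposal is missing. Two smaller remarks: antipode-invariance of $\lambda'\circ\pi$, which is needed for it to be a two-sided quantum character and hence a Hennings form, uses that $\pi$ commutes with $S$, so ``bialgebra morphism'' is not quite enough for the second consequence either; and the statement does not ask for $\pi\circ\Lambda\neq 0$, so the nonvanishing fact you invoke is not required here.
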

 
\begin{proof}
 The family of structure morphisms $i_X : \calE \to X \otimes X^*$ of the end $\calE \in \calC$ restricts to a dinatural transformation with target
 \begin{align*}
  (\_ \otimes \_^*) : \calC' \times (\calC')^\op & \to \calC \\*
  (X',Y') & \mapsto X' \otimes (Y')^*.
 \end{align*}
 In particular, if 
 \[
  \calE' := \int_{X' \in \calC'} X' \otimes (X')^*
 \] 
 with structure morphisms $i'_{X'} : \calE' \to X' \otimes (X')^*$ for every $X' \in \calC'$, then the universal property satisfied by $\calE'$ implies that there exists a unique morphism $\pi : \calE \to \calE'$ of $\calC$ such that
 \[
  i_{X'} = i'_{X'} \circ \pi
 \]
 for every $X' \in \calC'$. 
 
 First of all, $\pi$ is an epimorphims. Indeed, this is an immediate consequence of the Four Lemma applied to the commutative diagram with exact rows
 \begin{center} 
  \begin{tikzpicture}[descr/.style={fill=white}] \everymath{\displaystyle}
   \node (P0) at (0,1) {$0$};
   \node (P1) at (2,1) {$\calE$};
   \node (P2) at (5,1) {$\bigoplus_{P \in \calP_\calC} P \otimes P^*$};
   \node (P3) at (9,1) {$\bigoplus_{f \in \calB} T(f) \otimes S(f)^*$};
   \node (P4) at (0,-1) {$0$};
   \node (P5) at (2,-1) {$\calE'$};
   \node (P6) at (5,-1) {$\bigoplus_{P' \in \calP_{\calC'}} P' \otimes (P')^*$};
   \node (P7) at (9,-1) {$\bigoplus_{f \in \calB} T(f) \otimes S(f)^*$};
   \draw
   (P0) edge[->] (P1)
   (P1) edge[right hook->] node[above] {\scriptsize $\iota$} (P2)
   (P2) edge[->] node[above] {\scriptsize $\delta$} (P3)
   (P4) edge[->] (P5)
   (P5) edge[right hook->] node[above] {\scriptsize $\iota'$} (P6)
   (P6) edge[->] node[above] {\scriptsize $\delta'$} (P7)
   (P0) edge[double equal sign distance] (P4)
   (P1) edge[->] node[left] {\scriptsize $\pi$} (P5)
   (P2) edge[->>] node[right] {\scriptsize $\pi_{\calP_{\calC'}}$} (P6)
   (P3) edge[double equal sign distance] (P7);
  \end{tikzpicture}
 \end{center}
 where we consider sets $\calP_{\calC'} \subset \calP_\calC$ of representatives of isomorphism classes of indecomposable projective objects of $\calC' \subset \calC$, where $\calB$ and $\calB'$ are defined by Equation~\eqref{E:bases_end_kernel}, where $\iota$, $\delta$, $\iota'$, and $\delta'$ are defined by \eqref{E:iota_delta_end_kernel}, and where $\pi_{\calP_{\calC'}}$ denotes the projection morphism. 
 
 Next, $\pi$ is a Hopf algebra morphism. Indeed, it is an algebra morphism, since
 \[
  \pic{Hopf_morphism_algebra}
 \]
 it is a coalgebra morphism, since
 \[
  \pic{Hopf_morphism_coalgebra}
 \]
 and it preserves the antipode, since
 \[
  \pic{Hopf_morphism_antipode}
 \]
 
 Furthermore, $\pi \circ \Lambda$ is an integral element of $\calE'$, since
 \[
  \pic{Hopf_morphism_integral_element}
 \]
 and $\pi$ is an epimorphism.
 
 Finally, $\lambda' \circ \pi$ is a Hennings form on $\calE$, since
 \[
  \pic{Hopf_morphism_Hennings} \qedhere
 \]
\end{proof}

Thanks to Proposition~\ref{P:dichromatic}, if $\calC' \subset \calC$ is a unimodular ribbon subcategory and $\Lambda$ is a two-sided integral element of $\calE$, then there exists a two-sided integral form $\lambda'$ on $\calE'$ such that $\lambda' \circ \pi$ is compatible with $\Lambda$. 

Let $\calC' \subset \calC$ be semisimple finite ribbon categories, and let $\calS_{\calC'} \subset \calS_\calC$ be sets of representatives of isomorphism classes of simple objects of $\calC' \subset \calC$. If $\calD,\calD' \in \Bbbk$ satisfy
\begin{align*}
 (\calD)^2 &= \sum_{X \in \calS_{\calC}} \dim_\calC(X)^2, &
 (\calD')^2 &= \sum_{X' \in \calS_{\calC'}} \dim_\calC(X')^2,
\end{align*}
then \cite[Theorem~7.21.12]{EGNO15} implies that $\calD \calD' \neq 0$. Then, the two-sided integral element
\begin{align*}
 \Lambda &= (\calD' \delta_{\one,X} \lcoev_X)_{X \in \calS_\calC}
\end{align*}
of $\calE$ and the two-sided integral form
\begin{align*}
 \lambda' &= ((\calD')^{-1} \dim_\calC(X') \rev_{X'})_{X' \in \calS_{\calC'}},
\end{align*}
on $\calE'$ satisfy
\[
 \lambda' \circ \pi \circ \Lambda = \id_{\one},
\]
which means that the Bobtcheva--Messia element $\Lambda$ of $\calE$ and the Hennings form $\lambda' \circ \pi$ on $\calE$ are compatible. Furthermore,
\begin{equation}
 \lambda' \circ \pi \circ \eta = \calD' \id_{\one} \neq 0.
\end{equation}
In particular, the invariant $J_\calC$ associated with $\Lambda$ and $\lambda' \circ \pi$ can be renormalized to an invariant of closed \mnflds{4} $\hat{W} = \hat{W}(L)$ that recovers the dichromatic Broda--Petit invariant $I_+$ of \cite{B95} and \cite{P08} as formulated in \cite[Theorem~2.18]{LY23}, at least when $\calC$ is factorizable, by setting
\[ 
 J_\calC(\hat{W}) := \frac{F_\calC(L_{\Lambda,\lambda' \circ \pi})}{\calD^{\sigma_0(L)}},
\]
where $\sigma_0(L)$ is the dimension of the kernel of the linking matrix of $L$. Notice that the two normalizations differ, since
\[
 J_\calC(\hat{W}) = (\calD')^{1-\chi(\hat{W})} \calD^{\sigma_+(\hat{W})} I_+(\hat{W}),
\]
where $\chi(\hat{W})$ is the Euler characteristic of $\hat{W}$ and $\sigma_+(\hat{W})$ is the positive index of inertia of the intersection pairing on $H_2(\hat{W})$.

\subsection{\texorpdfstring{$1$-Isotopy}{1-Isotopy} invariants of ribbon surfaces from banded unlink forms}\label{S:invariants_banded_unlink_forms}

Let $\calC$ be a unimodular ribbon category.

\begin{proposition}\label{P:cup}    
 If a $\calC$-labeled Kirby link $L' = L'_1 \cup L'_2 \subset [0,1]^{\times 3}$ is obtained from a $\calC$-labeled Kirby link $L = L_1 \cup L_2 \subset [0,1]^{\times 3}$ by removing a chain pair of components of $L_1$ and $L_2$ labeled by a Bobtcheva--Messia element of $\calE$ and by a compatible banded unlink form on $\calE$, respectively, and based at a component of $L_2$ labeled by the same banded unlink form on $\calE$, then
 \[
  F_\calC(L) = F_\calC(L').
 \]
\end{proposition}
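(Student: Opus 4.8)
The plan is to mimic the argument for Proposition~\ref{P:canceling_pairs}, of which this is a variant, and to reduce the claimed equality to the second defining relation of a banded unlink form, namely the right-hand identity in Equation~\eqref{E:banded_unlink_form}. The chain pair removed in passing from $L$ to $L'$ consists of a dotted component labeled by the Bobtcheva--Messia element $w$ and an undotted component labeled by the banded unlink form $\psi$, based at a further undotted component of $L_2$ carrying the same label $\psi$. Topologically, the dotted component clasps both the small undotted component of the pair and the base component, so its Seifert disk is pierced by exactly one strand of each. First I would fix a bottom-top presentation $\tilde{L} \cup \tilde{G}$ in which this dotted component appears as a single cycle, and use the skein equivalence of Equation~\eqref{E:skein_equivalence} to arrange that each of the two undotted components meets the disk in a single strand.

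Next I would evaluate $F_\calC$ on the resulting local picture by means of the universal property of the end in Equation~\eqref{E:universal_property}, feeding the label $w : \one \to \calE$ into the dotted cycle as in Equation~\eqref{E:dinatural_transformation}. The effect is that the coproduct of the Hopf algebra $\calE$ distributes $w$ among the two strands threading the disk: one tensor factor lands on the strand of the base component and the other on the strand of the small undotted component, and both are then contracted against the quantum character $\psi$. The morphism of $\calC$ produced in this way is exactly the left-hand side of the right-hand identity in Equation~\eqref{E:banded_unlink_form}, while the label $\psi$ surviving on the base component of $L'$ after the removal is its right-hand side. Thus the chain pair removal is precisely this relation, and $F_\calC(L) = F_\calC(L')$ follows.

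To make the bookkeeping rigorous I would use the centrality and antipode-invariance of $w$, together with the fact that $\psi$ is a two-sided quantum character, so that both Equation~\eqref{E:qc} and its symmetric form Equation~\eqref{E:qc_symmetric} are available to slide the relevant labels past the coupons and crossings of the ambient graph and to normalize orientations and framings, exactly as in the proof of Proposition~\ref{P:canceling_pairs}. I expect the main obstacle to lie in the faithful translation of the ``based at'' configuration into a bottom-top presentation: one has to check that the two strands piercing the dotted disk can be brought simultaneously into a standard position, so that the single dinatural structure morphism $i_{S_j}$ feeds both of them into one application of the coproduct of $\calE$ rather than into two independent copies. Once this normalization is in place, the remainder is a direct, if somewhat lengthy, diagrammatic computation culminating in Equation~\eqref{E:banded_unlink_form}.
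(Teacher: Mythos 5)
Your proposal follows essentially the same route as the paper's proof: represent the chain-pair removal in a bottom-top presentation, feed the Bobtcheva--Messia element $w$ into the dotted cycle via the universal property of the end so that its coproduct distributes over the two strands piercing the Seifert disk, and reduce the resulting morphism to the second identity in Equation~\eqref{E:banded_unlink_form}, with the independence/normalization arguments handled as in Propositions~\ref{P:KLRT_functor} and \ref{P:canceling_pairs}. The paper's proof in Appendix~\ref{A:proof_cup} is exactly this computation carried out diagrammatically.
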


Once again, the proof of Proposition~\ref{P:cup} is postponed to Appendix~\ref{A:proof_cup}.

Given a banded unlink $U \cup B \subset [0,1]^{\times 3}$, a banded unlink form $\psi$ on $\calE$, a compatible Bobtcheva--Messia element $w$ of $\calE$, and a compatible Hennings form $\varphi$ on $\calE$, we denote by $L'_{\psi,w,\varphi}(U,B) = L'_{1,w} \cup L'_{2,\psi,\varphi}$ the $\calC$-labeled Kirby link obtained as shown.
\[
 \pic{invariants_from_banded_unlink_forms_1}
\]
Notice that the definition of $L'_{\psi,w,\varphi}(U,B)$ involves the operation of doubling the set of bands $B$ along the framing.

\begin{theorem}\label{T:ribbon_surface_invariant_quantum_character}
 Let $\psi$ be a banded unlink form on $\calE$, let $w$ be a compatible Bob\-tchev\-a--Mes\-sia element of $\calE$, and let $\varphi$ be a compatible Hennings form on $\calE$. If $W = W(L)$ is a \dmnsnl{4} \hndlbd{2} and $\varSigma = \varSigma(U,B) \subset W$ is a ribbon surface, then
 \[
  J_\calC(W,\varSigma) := F_\calC(L_{w,\varphi} \cup L'_{w,\varphi,\psi}(U,B))
 \]
 is an invariant of the pair $(W,\varSigma)$ up to \stp{1}.
\end{theorem}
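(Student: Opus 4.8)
The plan is to reduce everything to invariance under band moves and then dispatch each band move to a result already established. By Proposition~\ref{P:1-stp}, the pairs $\varSigma(U,B) \subset W(L)$ and $\varSigma(U',B') \subset W(L')$ are \stpc{1} precisely when the regular banded unlink diagrams $L \cup U \cup B$ and $L' \cup U' \cup B'$ are related by a finite sequence of band moves. Since the assignment $(L,U,B) \mapsto L_{w,\varphi} \cup L'_{\psi,w,\varphi}(U,B)$ produces a $\calC$-labeled Kirby link, it therefore suffices to check that $F_\calC(L_{w,\varphi} \cup L'_{\psi,w,\varphi}(U,B))$ is left unchanged by each of the band moves listed in Definition~\ref{D:1-stp_band_unlink}, after translating that move through the doubling construction of Section~\ref{S:invariants_banded_unlink_forms}; the resulting invariance under band moves is exactly what makes $J_\calC(W,\varSigma)$ well defined on the \stp{1} class.

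First I would dispatch the moves that are topological in nature. Isotopies in $[0,1]^{\times 2}$ and regular Reidemeister moves of $L \cup U \cup B$ translate into isotopies and regular Reidemeister moves of the associated $\calC$-labeled Kirby link; since the morphisms of $\calK_\calC$ are isotopy classes and $F_\calC$ is a well-defined ribbon functor by Proposition~\ref{P:KLRT_functor}, these leave $F_\calC$ invariant. The travels of intersections between unknots and bands through crossings are likewise consequences of the functoriality of $F_\calC$ together with the centrality of $w$, and so change nothing.

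Next come the moves carrying genuine algebraic content, each of which I would match to one of the propositions already proved. Creation and removal of canceling pairs of dotted and undotted components of $L$ are covered directly by Proposition~\ref{P:canceling_pairs}, since in $L_{w,\varphi}$ the dotted component is labeled by the Bobtcheva--Messia element $w$ and the undotted one by the compatible Hennings form $\varphi$. The slides over undotted components and over bands, together with the swims, reduce to Proposition~\ref{P:slides_swims}: after doubling, every component that slides is labeled either by $\psi$ or by $\varphi$---both of which are banded unlink forms, the latter being a compatible banded unlink form as noted after Definition~\ref{D:banded_unlink_form}---while the component it slides over is a component of $L_2$ or a doubled band, hence labeled by the Hennings form $\varphi$, which is exactly the hypothesis of Proposition~\ref{P:slides_swims}. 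Finally, the cup moves become, after doubling, the creation or removal of a chain pair consisting of a dotted component labeled by $w$ and a doubled band labeled by $\varphi$, based at a component of $U$ labeled by $\psi$, which is precisely the situation of Proposition~\ref{P:cup}; here the second defining equation of the banded unlink form $\psi$ in \eqref{E:banded_unlink_form} is what makes the cancellation succeed.

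The main obstacle I anticipate is delicate bookkeeping rather than new algebra: the doubling operation in the definition of $L'_{\psi,w,\varphi}(U,B)$ turns each elementary band move on $U \cup B$ into a composite move on the resulting Kirby link, and one must verify in each case that the doubled picture is an exact instance of the hypotheses of Propositions~\ref{P:canceling_pairs}, \ref{P:slides_swims}, or \ref{P:cup}. In particular, one must check that the labels ($\psi$ on $U$, $\varphi$ on the doubled bands and on $L_2$, and $w$ on $U \cap B$) land in the correct slots, and that the cup moves and swims---the operations specific to the surface rather than to the handlebody---really correspond to the doubled chain pairs and slides. Since the genuinely algebraic identities are confined to the three propositions, once this translation is carried out carefully the theorem follows.
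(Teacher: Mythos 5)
Your overall strategy is the same as the paper's: reduce \stp{1} to the band moves of Definition~\ref{D:1-stp_band_unlink} via Proposition~\ref{P:1-stp}, and dispatch each move to Propositions~\ref{P:canceling_pairs}, \ref{P:slides_swims}, and \ref{P:cup}. However, several of your individual dispatches are not quite right, and the discrepancy is exactly in the ``bookkeeping'' you defer. A slide of a band over another band does not become a green-over-green slide covered directly by Proposition~\ref{P:slides_swims}: after doubling, the moving data sit on the purple $w$-labeled dotted components at $U \cap B$, so the move becomes a slide of a $w$-labeled dotted component over another $w$-labeled dotted component (a \hndl{1} slide), which is only a \emph{derived} move obtained by combining Propositions~\ref{P:canceling_pairs} and \ref{P:slides_swims}; likewise a swim becomes a slide of a green $\varphi$-labeled component \emph{under} a purple $w$-labeled component, again a derived move. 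Your description of the cup move also does not match the hypotheses of Proposition~\ref{P:cup}, which requires the undotted member of the chain pair to carry the banded unlink form $\psi$ (the same label as the base), not $\varphi$; to get into that position the paper first performs a preliminary local simplification of the doubled band picture (a $\psi$-over-$\varphi$ slide followed by cancellation of the $\varphi$-labeled doubled component against one of its $w$-labeled meridians). None of this changes the toolkit --- everything still follows from the three propositions --- but the translation of each band move through the doubling construction is where the actual content of the proof lies, and as stated your assignments would not literally go through.
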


\begin{proof}
 First of all, the following computation is a consequence of the invariance of $F_\calC$ under slides of green $\psi$-labeled components over green $\varphi$-labeled components, established in Proposition~\ref{P:slides_swims}, and under removal of canceling pairs of green $\varphi$-labeled components with purple $w$-labeled meridians, established in Proposition~\ref{P:canceling_pairs}.
 \[
  \pic{invariants_from_banded_unlink_forms_2}
 \]
 
 Invariance of $J_\calC$ under cup moves is a consequence of the invariance of $F_\calC$ under cre\-a\-tion/re\-mov\-al of chain pairs of $w$-labeled dotted components and $\psi$-labeled undotted components, established in Proposition~\ref{P:cup}. 
 
 Invariance of $J_\calC$ under slides of bands over other bands is a consequence of the invariance of $F_\calC$ under slides of purple $w$-labeled components over other purple $w$-labeled components, which is a consequence of Propositions~\ref{P:canceling_pairs} and \ref{P:slides_swims}, as shown.
 \[
  \pic{invariants_from_banded_unlink_forms_3}
 \]
 
 Invariance of $J_\calC$ under cre\-a\-tion/re\-mov\-al of canceling pairs of dotted and undotted components and under slides over undotted components follows from the invariance of $F_\calC$ under cre\-a\-tion/re\-mov\-al of canceling pairs of $w$-labeled dotted components and $\varphi$-labeled undotted components, established in Proposition~\ref{P:canceling_pairs}, and under slides of $\varphi$-labeled undotted components over other $\varphi$-labeled undotted components, established in Proposition~\ref{P:slides_swims}. 
 
 Invariance of $J_\calC$ under swims through bands is a consequence of the invariance $F_\calC$ under slides of green $\varphi$-labeled components under purple $w$-labeled components, which is a consequence of Propositions~\ref{P:canceling_pairs} and \ref{P:slides_swims}, as shown.
 \[
  \pic{invariants_from_banded_unlink_forms_4}
 \]
\end{proof}

\subsection{\texorpdfstring{$2$-Deformation invariants of admissible Kirby graphs in the boundary of $4$-dimensional $2$-handlebodies from traces}{2-Deformation invariants of admissible Kirby graphs in the boundary of 4-dimensional 2-handlebodies from traces}}\label{S:admissible_graphs}

Let $\calC$ be a unimodular ribbon category. If $\calI \subset \calC$ is an ideal, then we say that a closed $\calC$-labeled Kirby graph $L \cup G \in \End_{\calK_\calC}(\varnothing)$ is \textit{$\calI$-admissible} if $G$ has an edge whose label is an object of $\calI$.

Notice that, when $\calC$ is semisimple, every ideal in $\calC$ coincides with $\calC$ itself, so every closed $\calC$-labeled Kirby graph $L \cup G$ is $\calC$-admissible, as long as $G \neq \varnothing$. 

If $L \cup G$ is an $\calI$-admissible $\calC$-labeled Kirby graph and $V$ is an object of $\calI$, we say an endomorphism $L \cup T_V(G)$ of $(+,V)$ in $\calK_\calC$ is a \textit{cutting presentation of $L \cup G$} if 
\[
 \pic{cutting_presentation}.
\]

\begin{proposition}\label{P:admissible_closed_graph_invariant}
 Let $\calI \subset \calC$ be an ideal, and let $\rmt$ be a trace on $\calI$. If $L \cup G$ is an $\calI$-admissible $\calC$-labeled Kirby graph and $L \cup T_V(G)$ is a cutting presentation of $L \cup G$, then
 \[
  F'_\calC(L \cup G) := \rmt_V(F_\calC(L \cup T_V(G)))
 \]
 is an isotopy invariant of $L \cup G$.
\end{proposition}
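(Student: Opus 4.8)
The plan is to prove the statement in two halves: that the scalar $\rmt_V(F_\calC(L \cup T_V(G)))$ is invariant under isotopy of the closed Kirby graph $L \cup G$, and that it does not depend on the chosen cutting presentation. The first half is essentially formal. An isotopy carrying $L \cup G$ to an isotopic closed graph transports a chosen cutting edge $e$ to a cutting edge of the target, and the corresponding open graphs are isotopic as morphisms of $\calK_\calC$; since $F_\calC$ is a well-defined functor on isotopy classes by Proposition~\ref{P:KLRT_functor}, the endomorphism $F_\calC(L \cup T_V(G)) \in \End_\calC(V)$ is unchanged, and hence so is its $\rmt$-value. The substance of the proof is therefore the independence of the cutting presentation, i.e.\ comparing the values obtained from two different cut edges.

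First I would dispose of the case of two cut points on the same edge $e$ labeled by $V \in \calI$. The segment of $e$ between the two points carries the identity morphism $\id_V$, so sliding the cut along $e$ is realized by an isotopy in $\calK_\calC$; concretely, the two resulting endomorphisms of $V$ are $b \circ \id_V$ and $\id_V \circ b$ for the morphism $b \in \End_\calC(V)$ reading the remainder of the diagram, so $F_\calC$ produces the same endomorphism and the $\rmt$-values coincide. In the more general situation where two presentations of the cut differ by a rotation of the reading (so that the underlying endomorphisms are genuinely $b \circ a$ versus $a \circ b$ for $a \in \calC(V,W)$, $b \in \calC(W,V)$ with $W \in \calI$), the equality is supplied by the cyclicity property of Definition~\ref{D:trace}.

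The main step compares two cut edges $e$ and $e'$, labeled $V, V' \in \calI$ respectively. I would cut both edges simultaneously, obtaining an endomorphism $L \cup D$ of $(+,V) \otimes (+,V')$ in $\calK_\calC$, so that $F_\calC(L \cup D) \in \End_\calC(V \otimes V')$. Re-closing the edge $e'$ recovers the cutting presentation $L \cup T_V(G)$, while re-closing $e$ recovers $L \cup T_{V'}(G)$; since closing a black edge is implemented by the duality morphisms of $\calK_\calC$, functoriality of $F_\calC$ gives $F_\calC(L \cup T_V(G)) = \ptr_\rmR(F_\calC(L \cup D))$ and $F_\calC(L \cup T_{V'}(G)) = \ptr_\rmL(F_\calC(L \cup D))$, where $\ptr_\rmR$ traces out the $V'$-factor and $\ptr_\rmL$ the $V$-factor. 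Applying the partial trace property of Definition~\ref{D:trace} twice --- once in its right form (using $V \in \calI$) and once in its left form (using $V' \in \calI$) --- yields
\[
 \rmt_V(\ptr_\rmR(F_\calC(L \cup D))) = \rmt_{V \otimes V'}(F_\calC(L \cup D)) = \rmt_{V'}(\ptr_\rmL(F_\calC(L \cup D))),
\]
which is precisely $\rmt_V(F_\calC(L \cup T_V(G))) = \rmt_{V'}(F_\calC(L \cup T_{V'}(G)))$. As any two cutting presentations are connected by a finite sequence of the moves above, this establishes independence.

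The hard part will be the bookkeeping that guarantees the diagrammatic closure of a black edge agrees \emph{on the nose} with the categorical partial traces $\ptr_\rmR, \ptr_\rmL$ appearing in Definition~\ref{D:trace}: one must verify that the framing, twist, and pivotal data encoded in the ribbon functor $F_\calC$ make the graphical turn-back coincide with the evaluation/coevaluation combination used to state the partial trace axiom, and that the doubly-cut graph $D$ can be positioned with the $V$- and $V'$-strands as the left and right tensor factors --- braiding them into standard position where needed, the cut-independence guaranteed by cyclicity again ensuring that the final $\rmt$-value is unaffected. Once these normalizations are pinned down, the three paragraphs above combine to give the claimed isotopy invariance of $F'_\calC(L \cup G)$.
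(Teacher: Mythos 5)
Your proposal is correct and follows essentially the same route as the paper: the key step in both is to cut two distinct edges simultaneously, obtaining $L \cup T_{V,V'}(G) \in \End_{\calK_\calC}((+,V)\otimes(+,V'))$ whose left and right partial traces recover the two single-cut presentations, and then to apply the partial trace property of Definition~\ref{D:trace} twice through $\rmt_{V \otimes V'}$. Your additional remarks on the same-edge case via cyclicity and on isotopy invariance via Proposition~\ref{P:KLRT_functor} are details the paper leaves implicit, but they do not change the argument.
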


\begin{proof}
 If $L \cup T_V(G)$ and $L \cup T_W(G)$ are two different cutting presentations of $L \cup G$, we can find an endomorphism $L \cup T_{V,W}(G)$ of $((+,V),(+,W))$ such that 
 \[
  \ptr_\rmR(L \cup T_{V,W}(G)) = L \cup T_V(G), \qquad \ptr_\rmL(L \cup T_{V,W}(G)) = L \cup T_W(G).
 \]
 Roughly speaking, if $L \cup T_V(G)$ and $L \cup T_W(G)$ are obtained from $L \cup G$ by cutting two different edges, then $L \cup T_{V,W}(G)$ is obtained by cutting both edges. Then the properties of the trace $\rmt$ imply that
 \[
  \rmt_V (F_\calC(L \cup T_V(G))) = \rmt_{V \otimes W} (F_\calC(L \cup T_{V,W}(G))) = \rmt_W (F_\calC(L \cup T_W(G))). \qedhere
 \]
\end{proof}

We call $F'_\calC$ the \textit{renormalized invariant of $\calI$-admissible $\calC$-labeled Kirby graphs}.

An object $V \in \calC$ and a Hennings form $\varphi$ on $\calE$ are \textit{compatible} if
\begin{equation}\label{E:admissible_label}
 \pic{admissible_label}
\end{equation}
Notice that every object in $\calC$ is compatible with the two-sided integral form $\lambda$, which is a Hennings form on $\calE$.

\begin{proposition}\label{P:arbitrary_slides}    
 If a $\calC$-labeled Kirby graph $L \cup G' = L_1 \cup L_2 \cup G'$ is obtained from a $\calC$-labeled Kirby graph $L \cup G = L_1 \cup L_2 \cup G$ by sliding an edge of $G$ labeled by an object of $\calC$ over a component of $L_2$ labeled by a compatible Hennings form on $\calE$, then
 \[
  F_\calC(L \cup G) = F_\calC(L \cup G').
 \]
\end{proposition}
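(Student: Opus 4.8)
The plan is to reduce the slide to the single-strand identity recorded in the compatibility condition \eqref{E:admissible_label}. Since $F_\calC : \calK_\calC \to \calC$ is a ribbon functor by Proposition~\ref{P:KLRT_functor}, it is invariant under ambient isotopy, so I may normalize the position of the diagrams and describe the move concretely: sliding the $V$-labeled edge of $G$ over the $\varphi$-labeled component $K \subset L_2$ following the framing of $K$ replaces that edge by the same edge carrying, in addition, a parallel push-off that runs alongside the entire component $K$. Hence $L \cup G'$ differs from $L \cup G$ only by this extra $V$-labeled arc running parallel to the $\varphi$-labeled loop $K$, and it suffices to show that such an arc can be removed without altering $F_\calC$.

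To evaluate $F_\calC$ on this configuration I would use the construction of Section~\ref{S:KLRT_functor}. In a bottom-top presentation the undotted component $K$ is cut into a cycle which, through the universal property of the end $\calE$, is colored by the universal object and finally paired with the quantum character $\varphi : \calE \to \one$; the parallel $V$-push-off then becomes a $V$-strand running alongside this universal $K$-strand, with two braidings of $V$ with $\calE$ at its endpoints. The delicate point is that $K$ may be threaded by other strands of $L_2 \cup G$, and the push-off follows $K$ past all of them, so a priori the slide changes how $V$ links those strands. I would handle this exactly as in the closed-loop slide of Proposition~\ref{P:slides_swims}: the doubling of $K$ inserts the coproduct of $\calE$ on its universal coloring, and the Hennings form identity \eqref{E:Hennings_form}, available because $\varphi \in \AHF(\calE)$, together with the cyclicity built into the quantum-character condition \eqref{E:qc}, collapses this coproduct and decouples the $V$-push-off from every strand threading $K$.

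Once the $V$-push-off has been freed from the other strands, it runs parallel to a clean arc of the $\varphi$-evaluated universal $K$-strand, and the comparison reduces to the equality of a single $V$-strand wrapping the $\varphi$-labeled end loop with the same strand passing by untouched. This is precisely the compatibility condition \eqref{E:admissible_label} for $V$ and $\varphi$, and invoking it yields $F_\calC(L \cup G) = F_\calC(L \cup G')$. In the special case $\varphi = \lambda$ the identity holds for every object $V$, in agreement with the remark that every object of $\calC$ is $\lambda$-compatible.

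The main obstacle is the decoupling step of the second paragraph. In contrast with Proposition~\ref{P:slides_swims}, where both the sliding and the fixed components are closed loops evaluated by forms and the banded unlink identity \eqref{E:banded_unlink_form} absorbs the parallel copy in a single stroke, here the sliding object is an open $V$-edge while $K$ may be threaded by arbitrarily many strands. Separating the single-strand wrap of $V$ from these strands---so that the purely single-strand condition \eqref{E:admissible_label} becomes applicable---requires careful bookkeeping of the coproduct insertions on $\calE$ produced by the doubling and of the entry and exit braidings, and it is precisely here that the Hennings form property and the dinaturality of the end structure morphisms must be combined.
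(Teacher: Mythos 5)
Your overall strategy---pass to a bottom-top presentation, express the slid diagram through the coproduct of $\calE$ on the universal coloring of $K$, and conclude with the compatibility condition \eqref{E:admissible_label}---is the right one, and the last step is exactly how the paper closes the argument. But the intermediate ``decoupling step'' that you yourself single out as the main obstacle is both misconceived and left unproved, and this is a genuine gap. The Hennings form identity \eqref{E:Hennings_form} cannot perform the decoupling you ask of it: that identity pairs one leg of the coproduct of $\calE$ with a \emph{second evaluation by a form} (in Sweedler shape, $\varphi(x y_{(1)})\,\varphi(y_{(2)}) = \varphi(x)\,\varphi(y)$), whereas in the present configuration the second leg of the coproduct is not evaluated by any form---it acts on the object $V$ through the structure morphism $i_V$ and duality. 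No identity built only out of evaluations by $\varphi$ can constrain that leg. Indeed, if the Hennings property together with cyclicity sufficed to reduce the slid picture to a ``clean'' single wrap, then \eqref{E:admissible_label} would hold automatically for every object $V$ and every Hennings form, making the compatibility hypothesis vacuous; but the paper notes after Theorem~\ref{T:admissible_4-dim_2-hb-invariant} that slide invariance genuinely fails for labels that are not $\varphi$-compatible, so no such reduction can exist.

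The correct observation, which replaces your decoupling paragraph entirely, is that no separate decoupling is needed. The strands linking $K$ are already packaged, by the universal property of the end, into a single morphism landing in the $\calE$-factor attached to $K$ (this is the content of the construction in Section~\ref{S:KLRT_functor}); and the defining property of the coproduct of $\calE$, which identifies $(i_X \otimes i_Y)\circ\Delta$ with $i_{X\otimes Y}$ up to a rearrangement of tensor factors, says precisely that the two-cable of the $K$-curve by the universal strand and the $V$-push-off---all crossings with the linking strands included---factors as $\Delta$ followed by $i_X$ on one leg and the canonical $\calE$-action on $V$ on the other. This factorization is an exact, purely formal identity requiring no property of $\varphi$ whatsoever. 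Composing with $\varphi$ on the first leg, the comparison of the slid and unslid diagrams becomes, in a single stroke, exactly the identity \eqref{E:admissible_label}. So the proof is a one-step application of \eqref{E:admissible_label} after the formal coproduct factorization, not the two-step Hennings-then-compatibility scheme you propose; as written, your argument does not close.
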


Once again, the proof of Proposition~\ref{P:arbitrary_slides} is postponed to Appendix~\ref{A:proof_arbitrary_slides}. If $\varphi$ is a Hennings form on $\calE$, then a $\calC$-labeled Kirby graph $L \cup G$ is \textit{$\varphi$-compatible} if all the labels of the edges of $G$ are compatible with $\varphi$.

\begin{theorem}\label{T:admissible_4-dim_2-hb-invariant}    
 Let $w$ be a Bobtcheva--Messia element of $\calE$, let $\varphi$ be a compatible Hennings form on $\calE$, and let $\rmt$ be a trace on an ideal $\calI \subset \calC$. If $W = W(L)$ is a \dmnsnl{4} \hndlbd{2}, and if $G \subset \partial W$ is an $\calI$-admissible $\varphi$-compatible $\calC$-labeled Kirby graph, then
 \[
  J'_\calC(W,G) := F'_\calC(L_{w,\varphi} \cup G)
 \]
 is an invariant of the pair $(W,G)$ up to \qvlnc{2}.
\end{theorem}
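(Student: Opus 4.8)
The plan is to reduce the statement to the three invariance results already established---Propositions~\ref{P:canceling_pairs}, \ref{P:slides_swims}, and \ref{P:arbitrary_slides}---together with the well-definedness of the renormalized invariant $F'_\calC$ recorded in Proposition~\ref{P:admissible_closed_graph_invariant}. Throughout, I would fix a cut edge of $G$ labeled by an object $V \in \calI$, which exists because $G$ is $\calI$-admissible, and work with the cutting presentation $L_{w,\varphi} \cup T_V(G)$, so that $J'_\calC(W,G) = \rmt_V(F_\calC(L_{w,\varphi} \cup T_V(G)))$. Since Proposition~\ref{P:admissible_closed_graph_invariant} guarantees that this value does not depend on the chosen cut, at each step I am free to select a cut edge lying away from the support of the move being performed; moreover $\calI$-admissibility is preserved by every move, so a valid cutting presentation persists throughout the deformation.

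First I would observe that $F'_\calC$ is invariant under isotopies in $[0,1]^{\times 2}$ and regular Reidemeister moves of $L \cup G$: these realize isotopies of the embedded $\calC$-labeled Kirby graph inside $[0,1]^{\times 3}$, and $F'_\calC$ is an isotopy invariant by Proposition~\ref{P:admissible_closed_graph_invariant}. In particular, passing an edge of $G$ over or under a single strand of a dotted component is such a regular Reidemeister move and is thus covered; as in the discussion following Definition~\ref{D:1-stp_band_unlink}, any remaining slides of edges of $G$ under dotted components reduce to handle cancellations and slides over undotted components. It then remains to treat the genuinely handle-theoretic moves generating a \dfrmtn{2} of $L \cup G$, listed in Definition~\ref{D:2-dfrmtn} together with the slides of $G$ over \hndls{2}: namely (a) creation/removal of a canceling pair of a $w$-labeled dotted component and a $\varphi$-labeled undotted component of $L$, (b) slides of a $\varphi$-labeled undotted component of $L$ over another $\varphi$-labeled undotted component, and (c) slides of an edge of $G$ over a $\varphi$-labeled undotted component of $L$.

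For (a) I would apply Proposition~\ref{P:canceling_pairs} to obtain $F_\calC(L_{w,\varphi} \cup T_V(G)) = F_\calC(L'_{w,\varphi} \cup T_V(G))$, where the strands of $T_V(G)$ threading the canceling meridian act as spectator strands. For (b) I would invoke Proposition~\ref{P:slides_swims} with $\psi = \varphi$: recall from Section~\ref{S:H-forms_BM-elements_BU-forms} that a Hennings form $\varphi$ compatible with a Bobtcheva--Messia element $w$ is itself a compatible banded unlink form, i.e.\ $\varphi \in \ABU(\calE)$, so sliding a $\varphi$-labeled component over a $\varphi$-labeled component is precisely the move covered by that proposition. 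For (c) I would use Proposition~\ref{P:arbitrary_slides}, which applies directly to the Kirby graph $L_{w,\varphi} \cup T_V(G)$ because $G$ is $\varphi$-compatible and hence every edge label slides over the $\varphi$-labeled component. In each case, applying $\rmt_V$ to the resulting equality of endomorphisms of $(+,V)$ in $\calK_\calC$ yields $J'_\calC(W,G) = J'_\calC(W',G')$, completing the verification.

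The main obstacle I anticipate is bookkeeping rather than conceptual. Propositions~\ref{P:canceling_pairs} and \ref{P:slides_swims} are stated for $\calC$-labeled Kirby links, so the point requiring care is that they continue to hold once the cut-open graph $T_V(G)$ is present: one must confirm that the local skein and universal-property arguments underlying those propositions are insensitive to additional (green or black) strands passing through the region of the move, and that the cut edge can always be isotoped clear of that region---which is exactly what the freedom granted by Proposition~\ref{P:admissible_closed_graph_invariant} provides. Once this locality is granted and the identification $\varphi \in \ABU(\calE)$ is used for move (b), the theorem follows by combining the three invariance propositions with the well-definedness of $F'_\calC$.
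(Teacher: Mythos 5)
Your proposal is correct and follows essentially the same route as the paper, which derives the theorem directly from Theorem~\ref{T:Bobtcheva-Messia_invariant} (itself resting on Propositions~\ref{P:canceling_pairs} and \ref{P:slides_swims}, via the observation that a compatible Hennings form is in particular a compatible banded unlink form) together with Proposition~\ref{P:arbitrary_slides} and the independence of the cutting presentation from Proposition~\ref{P:admissible_closed_graph_invariant}. Your extra care about the locality of the link-level propositions in the presence of the cut-open graph $T_V(G)$ is a point the paper leaves implicit, and it is resolved exactly as you say.
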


The proof of Theorem~\ref{T:admissible_4-dim_2-hb-invariant} follows immediately from Lemma~\ref{L:integral_BM_pair}, Theorem~\ref{T:Bobtcheva-Messia_invariant}, and Proposition~\ref{P:arbitrary_slides}. We call $J'_\calC$ the \textit{renormalized Kerler--Lyubashenko invariant of admissible Kirby graphs in \dmnsnl{4} \hndlbds{2}}. When $\calC$ is factorizable, then $J'_\calC$ coincides with the invariant of \cite[Section~4.3]{DGGPR19}.

If $G \subset \partial W$ is a $\varphi$-compatible $\calI$-admissible $\calC$-labeled oriented framed link, then $J'_\calC(W,G)$ is a \dfrmtn{2} invariant of the pair $(W,G)$. Notice however that, if $G$ is not $\varphi$-compatible, then $J'_\calC(W,G)$ is not in general invariant under arbitrary isotopies in $W = W(L)$, because it is not invariant under slides of components of $G$ over undotted components of $L$. In particular, the invariant defined in \cite[Theorem~4.3]{LY23} is not in general an isotopy invariant. It is not, for instance, in the example discussed at the end of \cite[Section~5]{LY23}.

\subsection{\texorpdfstring{$1$-Isotopy}{1-Isotopy} invariants of ribbon surfaces from banded unlink modules}\label{S:invariants_banded_unlink_modules}

Let $\calC$ be a unimodular ribbon category $\calC$. If $\calV \in \calC$ is a banded unlink module for a ribbon Frobenius algebra $\calF \in \calC$, then every regular diagram of an oriented banded unlink $U \cup B$ determines a $\calC$-labeled ribbon graph $G_{\calV,\calF}(U,B)$ obtained as shown.
\begin{align*}
 &\pic{invariants_from_banded_unlink_modules_1} \\*
 &\pic{invariants_from_banded_unlink_modules_2}
\end{align*}

\begin{theorem}\label{T:ribbon_surface_invariant_modified_trace}
 Let $w$ be a Bobtcheva--Messia element of $\calE$, let $\varphi$ be a compatible Hennings form on $\calE$, let $\rmt$ be a trace on an ideal $\calI \subset \calC$, and let $\calV \in \calI$ be a banded unlink module for a $\varphi$-compatible ribbon Frobenius algebra $\calF \in \calC$. If $W = W(L)$ is a \dmnsnl{4} \hndlbd{2} and $\varSigma = \varSigma(U,B) \subset W$ is an oriented ribbon surface, then
 \[
  J'_\calC(W,\varSigma) := J'_\calC(L_{w,\varphi} \cup G_{\calV,\calF}(U,B))
 \]
 is an invariant of the pair $(W,\varSigma)$ up to \stp{1}.
\end{theorem}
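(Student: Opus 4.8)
The plan is to invoke Proposition~\ref{P:1-stp}, which identifies \stp{1} of oriented ribbon surfaces with the equivalence relation generated by the band moves of Definition~\ref{D:1-stp_band_unlink}, and then to check that $J'_\calC(W,\varSigma)$ is left unchanged by each such move. The overall architecture mirrors the proof of Theorem~\ref{T:ribbon_surface_invariant_quantum_character}; the difference is that the surface is now recorded by the $\calC$-labeled ribbon graph $G_{\calV,\calF}(U,B)$ rather than by extra structure on the Kirby link, so invariance must be extracted from the defining properties of the banded unlink module $\calV$ over the ribbon Frobenius algebra $\calF$. Before anything else I would check that $J'_\calC$ is even defined on $L_{w,\varphi} \cup G_{\calV,\calF}(U,B)$: the graph is $\calI$-admissible because the components of $U$ carry the label $\calV \in \calI$, and it is $\varphi$-compatible because its only edge labels are $\calV$ and $\calF$, both compatible with $\varphi$ (for $\calF$ this is a hypothesis, and for $\calV$ it should follow from the $\calF$-module structure by realizing $\calV$ as a retract of $\calF \otimes \calV$). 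Well-definedness of the underlying renormalized invariant $F'_\calC$, independently of the chosen cutting presentation, is then Proposition~\ref{P:admissible_closed_graph_invariant}.

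Next I would dispatch the band moves already covered by the machinery of Section~\ref{S:admissible_graphs}. Isotopies in $[0,1]^{\times 2}$, regular Reidemeister moves, creation/removal of canceling pairs of dotted and undotted components of $L$, and slides of undotted components over other undotted components are \dfrmtns{2} of $L_{w,\varphi} \cup G_{\calV,\calF}(U,B)$, hence leave $J'_\calC$ invariant by Theorem~\ref{T:admissible_4-dim_2-hb-invariant} (which rests on Theorem~\ref{T:Bobtcheva-Messia_invariant}). Slides of bands and of undotted $U$-components over $\varphi$-labeled undotted components of $L$ are slides of edges of $G_{\calV,\calF}(U,B)$ over $L_2$, hence are absorbed by Proposition~\ref{P:arbitrary_slides} using the $\varphi$-compatibility of $\calV$ and $\calF$. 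Finally, travels of intersections $U \cap B$ through crossings amount to naturality of the braiding together with the fact that the coupon labeled by the action $\rho$ is a morphism of $\calC$, so they hold by functoriality of $F_\calC$ (Proposition~\ref{P:KLRT_functor}).

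This leaves the genuinely surface-intrinsic moves --- cup moves, slides of bands over other bands, and swims through bands --- each of which I would reduce to a local identity between morphisms of $\calR_\calC$, to be inserted into the cutting presentation and then fed through $\rmt$ via its cyclicity and partial-trace properties. Swims of bands and undotted components through a band are exactly the double braiding around the band, so they follow from the transparency of the band morphism $\calB$ (Definition~\ref{D:banded_unlink_modules}, property~(i)). Cup moves create or remove a small $\calV$-labeled circle joined to a band, and I would collapse such a configuration using the partial-trace property $\ptr_\rmL(\calB) = \id_\calV$ (Definition~\ref{D:banded_unlink_modules}, property~(ii)) together with the (co)unit of $\calF$. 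Slides of bands over other bands reorganize the doubled $\calF$-labeled bands, and I would verify them from associativity/coassociativity and the Frobenius relation for $\calF$, combined with the module axioms for $\rho$ and the self-duality $\calB^* = \calB$.

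The main obstacle I anticipate is the interaction of these local identities with the cutting presentation and the modified trace. Because $J'_\calC$ is computed by cutting the $\calV$-labeled edge and applying $\rmt_\calV$, I must ensure that the cup and swim moves can be performed \emph{within} the cut diagram --- that transparency of $\calB$ and $\ptr_\rmL(\calB)=\id_\calV$ remain applicable in the presence of the open $\calV$-strand --- and that cyclicity of $\rmt$ lets me move the cut freely around $U$ so that no move is obstructed by the location of the structure coupons or of the cut itself. The band-over-band slides carry the bulk of the remaining diagrammatic bookkeeping: one must track framings, orientations (the positive-basis convention on $U \cap B$), and the doubling of $B$ along the framing, and confirm that the Frobenius product and coproduct recombine correctly. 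As in Theorem~\ref{T:ribbon_surface_invariant_quantum_character}, none of these steps is conceptually deep once the axioms of a banded unlink module are in place --- they were engineered precisely to make the local identities hold --- but the careful verification is lengthy and is the part I expect to consume most of the work.
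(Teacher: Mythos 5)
Your proposal follows the paper's proof essentially step for step: reduce to the band moves of Proposition~\ref{P:1-stp}, absorb the moves involving only the Kirby link into Propositions~\ref{P:canceling_pairs}, \ref{P:slides_swims}, and \ref{P:arbitrary_slides} via Theorem~\ref{T:admissible_4-dim_2-hb-invariant} and Proposition~\ref{P:admissible_closed_graph_invariant}, and derive the cup, swim, and band-over-band moves from, respectively, the partial-trace property of $\calB$, its transparency, and the Frobenius/module axioms together with rigidity of $\calV$ and the ribbon property of $\calF$. This is exactly the decomposition the paper uses.

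One parenthetical step does not work as written: you propose to deduce the $\varphi$-compatibility of $\calV$ from the fact that $\calV$ is a retract of $\calF \otimes \calV$. Even granting that compatibility in the sense of Equation~\eqref{E:admissible_label} is closed under retracts and under tensor products of compatible objects, this argument is circular, since the compatibility of $\calF \otimes \calV$ already presupposes that of $\calV$; and compatibility is certainly not absorbent under tensoring with arbitrary objects (the unit $\one$ is always compatible, whereas for a general Hennings form not every object is, as the paper's remark after Equation~\eqref{E:admissible_label} indicates). You are right that \emph{some} compatibility statement for $\calV$ is needed, because the band moves include slides of components of $U$ (labeled by $\calV$ and $\calV^*$) over $\varphi$-labeled undotted components of $L$, and Proposition~\ref{P:arbitrary_slides} requires the slid label to be compatible with $\varphi$; but this has to be imposed (or verified in the cases of interest, e.g.\ when $\varphi$ is the two-sided integral form, with which every object is compatible) rather than derived from the module structure.
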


\begin{proof} 
 Invariance of $J'_\calC$ under isotopies in $[0,1]^{\times 2}$ follows from the fact that $\calV$ is a rigid left $\calF$-module.
 
 Invariance of $J'_\calC$ under regular Redemeister moves follows from the fact that $\calF$ is a ribbon Frobenius algebra.
 
 Invariance of $J'_\calC$ under cup moves follows from the partial trace property of the band morphism $\calB$.
 
 Invariance of $J'_\calC$ under slides of bands over other bands is proved by showing
 \[
  \pic{invariants_from_banded_unlink_modules_3}
 \]
 On one hand, we have
 \[
  \pic{invariants_from_banded_unlink_modules_4}
 \]
 On the other hand, we have
 \[
  \pic{invariants_from_banded_unlink_modules_5}
 \]
 
 Invariance of $J'_\calC$ under cre\-a\-tion/re\-mov\-al of canceling pairs of dotted and undotted components and under slides over undotted components follows from the invariance of $F_\calC$ under cre\-a\-tion/re\-mov\-al of canceling pairs of $\Lambda$-labeled dotted components and $\lambda$-labeled undotted components, established in Proposition~\ref{P:canceling_pairs}, and under slides over $\lambda$-labeled undotted components, established in Proposition~\ref{P:arbitrary_slides}.
 
 Invariance of $J'_\calC$ under slides over undotted components and under cre\-a\-tion/re\-mov\-al of canceling pairs of undotted components with dotted meridians follows from the same property for $F_\calC$, which was shown in Theorem~\ref{T:admissible_4-dim_2-hb-invariant}.
 
 Invariance of $J'_\calC$ under swims through bands follows from the transparency property of $\calB$, which implies
 \[
  \pic{invariants_from_banded_unlink_modules_6}
 \]
 for every $X \in \calC$. \qedhere
\end{proof}

Under the additional assumption 
\[
 \pic{Hopf_morphism_cap}
\]
which guarantees invariance under \textit{cap moves}, $J'_\calC$ can be extended to an invariant of closed surfaces $\hat{\varSigma} = \hat{\varSigma}(B,U)$. If $\calC' \subset \calC$ are semisimple finite ribbon categories, if $\Lambda$ is a two-sided integral element of $\calE$, and if $\lambda' \circ \pi$ is compatible Hennings form on $\calE$ induced by a two-sided integral form $\lambda'$ on $\calE'$, then the invariant $J'_\calC$ associated with $\Lambda$, $\lambda' \circ \pi$, and with a symmetric Frobenius algebra $\calF$ recovers the Lee--Yetter invariant of \cite[Theorem~3.3]{LY23}, at least when $\calC$ is factorizable.

\section{Computations and examples}\label{S:Examples}

In this section, we explain how to compute our invariant in the category of finite-dimensional representations of a unimodular ribbon Hopf algebra, and we discuss some examples of banded unlink modules over Frobenius algebras in unimodular ribbon categories, which give rise to quantum invariants of oriented ribbon surfaces up to \stp{1}.

\subsection{Algorithm for computation in \texorpdfstring{$\mods{H}$}{H-mod}}\label{S:algorithm}

If $H$ is a unimodular ribbon Hopf algebra over a field $\Bbbk$, then let us explain how to compute the functor $F_\calC : \calK_\calC \to \calC$ defined by Proposition~\ref{P:KLRT_functor} for $\calC = \mods{H}$.

Let $L = L_1 \cup L_2 \subset [0,1]^{\times 3}$ be a $\calC$-labeled Kirby link and let $G \subset [0,1]^{\times 3}$ be a disjoint $\calC$-labeled ribbon graph from $(\myuline{\varepsilon},\myuline{V})$ to $(\myuline{\varepsilon'},\myuline{V'})$. Let $k$ and $k'$ be the number of components of $L_1$ and of $L_2$, respectively, and let $\ell_1, \ldots, \ell_k$ be the number of geometric intersection points between $L_2 \cup G$ and a family of Seifert disks for the components of $L_1$. First of all, let us choose arbitrarily an orientation for every component of $L_2$. Next, let us replace the $j$th dotted component $L_{1,j}$ of $L_1$ by beads labeled by the components of the iterated coproduct $\Delta^{(\ell_j)}(z_j)$ of its label $z_j$ for every $1 \leqs j \leqs k$, as shown.
\[
 \pic{algorithm_01}
\]
Notice that the strands of $L_2 \cup G$ that intersect the Seifert disk of $L_{1,j}$ can be either green or black, and all configurations can occur, so this picture simply illustrates an example. Next, let us insert beads labeled by components of the R-matrix $R$ around crossings, as shown.
\[
 \pic{algorithm_02}
\]
Again, crossings can involve both green and black strands, and all configurations can occur, so this picture simply illustrates an example. Next, let us apply the antipode $S$ to the label of every bead that sits on a downward-oriented strand, as shown.
\[
 \pic{algorithm_03}
\]
Notice that this picture illustrates the operation along a downward-oriented green strand, but the same applies to downward-oriented black strands. Next, let us insert beads labeled by the pivotal element $g$ and its inverse $g^{-1}$ around right-oriented extrema, as shown.
\[
 \pic{algorithm_04}
\]
Again, this picture illustrates the operation around right-oriented green extrema, but the same applies to right-oriented black extrema. Next, let us collect all beads sitting on the same component of $L_2$ in a single upward-oriented place by sliding them along the strand without changing their order, and let us multiply everything together according to the following rule.
\[
 \pic{algorithm_05}
\]
At this point, we are left with a single bead labeled by $y_j$ on the $j$th component $L_{2,j}$ of $L_2$, whose label is $\varphi_j$, for every $1 \leqs j \leqs k'$, and we denote by $B(G)$ the $\Vect_\Bbbk$-labeled ribbon graph obtained from $G$ by interpreting every bead as the corresponding coupon, as shown.
\[
 \pic{algorithm_06}
\]
Notice that $B(G)$ is not a $\calC$-labeled ribbon graph because beads are not labeled by $H$-intertwiners, in general, only by linear endomorphisms.

\begin{proposition}\label{P:algorithm}
 The $H$-intertwiner $F_\calC(L \cup G) : F_\calC(\myuline{\varepsilon},\myuline{V}) \to F_\calC(\myuline{\varepsilon'},\myuline{V'})$ satisfies
\begin{equation*}
 F_\calC(L \cup G) = 
 \left( \prod_{j=1}^{k'} \varphi_j(g^{-1} y_j) \right)
 F_{\Vect_\Bbbk}(B(G)) \label{E:algorithm}
\end{equation*}
\end{proposition}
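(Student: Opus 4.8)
The plan is to unwind the abstract definition of $F_\calC$ from Section~\ref{S:KLRT_functor} in the concrete case $\calC = \mods{H}$, where the end $\calE$ is the adjoint representation $\myuline{H}$ with explicit structure morphisms $i_X : \myuline{H} \to X \otimes X^*$, $x \mapsto \sum_a (x \cdot v_a) \otimes f^a$. Once a bottom-top presentation $\tilde{L} \cup \tilde{G}$ is fixed, Equation~\eqref{E:KLRT_functor} factors the invariant as
\[
 F_\calC(L \cup G) = \bigl( \myuline{\varphi_1} \otimes \cdots \otimes \myuline{\varphi_{k'}} \otimes \id \bigr) \circ f_\calC(\eta_{\tilde{L} \cup \tilde{G}}) \circ \bigl( \myuline{z_1} \otimes \cdots \otimes \myuline{z_k} \otimes \id \bigr),
\]
where $\myuline{z_j}$ and $\myuline{\varphi_j}$ are the images under $T_\rmZ$ and $T_\rmQ$ of the labels $z_j \in \ACE(H)$ and $\varphi_j \in \BQC(H)$ of the $j$th dotted and undotted components. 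The first step is to use the universal property~\eqref{E:universal_property} of the end to rewrite this: precomposing the $j$th source leg with $\myuline{z_j}$ turns the structure morphism into $i_{S_j} \circ \myuline{z_j} : \one \to S_j \otimes S_j^*$, so that the whole computation reduces to evaluating the ordinary Reshetikhin--Turaev functor $F_\calC : \calR_\calC \to \calC$ of \cite[Chapter~I]{T94} on the colored ribbon graph $\tilde{L}_{X_1,\ldots,X_{k'}} \cup \tilde{G}$ of Equation~\eqref{E:dinatural_transformation}, with the caps $i_{S_j} \circ \myuline{z_j}$ inserted at the cycles and with $\myuline{\varphi_j}$ contracting the $j$th target leg.

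The second step is the classical bead computation of $F_\calC$ on $\calR_\calC$ in $\mods{H}$: decomposing $\tilde{L}_{X_1,\ldots,X_{k'}} \cup \tilde{G}$ into elementary pieces, every positive (respectively negative) crossing contributes components of $R$ (respectively of $R^{-1}$), every downward-oriented strand converts a bead $x$ into $S(x)$, and every right-oriented extremum contributes the pivotal element $g$ or $g^{-1}$, exactly as recorded in the pictures of Section~\ref{S:algorithm}. Collecting and multiplying, in order, all the beads accumulated on the black edges reproduces the linear map $F_{\Vect_\Bbbk}(B(G))$.

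The third and crucial step is to absorb the dotted and undotted circles. For a dotted component $L_{1,j}$ the cap $i_{S_j} \circ \myuline{z_j}$ sends $1 \mapsto \sum_a (z_j v_a) \otimes f^a$, and closing the associated cycle threads $z_j$ through the $\ell_j$ strands piercing a Seifert disk of $L_{1,j}$; using centrality of $z_j$ to move it freely along the cycle, antipode-invariance to absorb the strand orientations, and the comultiplicativity of $i_X$, one identifies this with the insertion of the components of $\Delta^{(\ell_j)}(z_j)$ as beads on those strands, i.e.\ precisely the first operation of the algorithm, after which these beads are absorbed into $B(G)$. For an undotted component $L_{2,j}$ the cycle is colored by the end variable $X_j$, and closing it through $i_{X_j}$ and contracting with $\myuline{\varphi_j}$ computes a quantum trace: collecting all the beads on $L_{2,j}$ into a single bead $y_j$ and combining the explicit form of $i_{X_j}$ with the pivotal closure produces the scalar $\varphi_j(g^{-1} y_j)$, the factor $g^{-1}$ arising from the pivotal element in the closure and the balanced property of $\varphi_j$ guaranteeing independence of the remaining choices. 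Multiplying the $k'$ scalar contributions by $F_{\Vect_\Bbbk}(B(G))$ yields the claimed identity.

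The main obstacle is the bookkeeping in the third step: one must check that the ``opening up'' of the Kirby link in the bottom-top presentation, combined with the universal property of the end and the graphical calculus of the second step, reproduces the iterated coproducts on exactly the right strands with the correct antipodes and pivotal corrections, and in particular that post-composing the form $\myuline{\varphi_j}$ with the universal leg into $\calE = \myuline{H}$ really yields the trace-type factor $\varphi_j(g^{-1} y_j)$ rather than $\varphi_j(y_j)$ or $\varphi_j(g\, y_j)$. Independence of the chosen bottom-top presentation is already guaranteed abstractly by Proposition~\ref{P:KLRT_functor}, so it suffices to carry out this verification for one convenient presentation; centrality of the $z_j$ and balancedness of the $\varphi_j$ are exactly what make the bead placements and the scalar factors well-defined.
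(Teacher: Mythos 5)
Your proposal is correct and follows essentially the same route as the paper's proof: fix a bottom-top presentation, reduce to the standard bead calculus for $\mods{H}$, absorb the auxiliary beads introduced by the opening-up paths using centrality of the $z_j$ and the quantum character property of the $\varphi_j$, and extract the factor $g^{-1}$ from the right evaluation closing each undotted cycle (so that $y_j = g x_j$). The bookkeeping you flag as the main obstacle is exactly what the paper's pictorial computation carries out, and your identification of which algebraic properties make each step work matches the paper's.
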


Since the proof of Proposition~\ref{P:algorithm} is again quite technical, we postpone it to Appendix~\ref{A:proof_algorithm}.

\subsection{Exotic framed knots and ribbon surfaces}\label{S:Akbulut}

For every $n \geqs 1$, the two Kirby diagrams
\[
 \pic{Akbulut_corks_Kirby_diagrams}
\]
represent an \textit{Akbulut cork}, which is a \dmnsnl{4} \hndlbd{2} $W^{(n)} = W(L^{(n)})$ with oriented framed knots $G,G' \subset W^{(n)}$ such that there exists a homeomorphism $f : W^{(n)} \to W^{(n)}$ satisfying $f(G) = G'$, but no such diffeomorphism \cite{AY08}. If $w$ is a Bobtcheva--Messia element of $\calE$, if $\varphi$ is a compatible Hennings form on $\calE$, if $\rmt$ is a trace on an ideal $\calI \subset \calC$, and if $G_V$ and $G'_V$ are $\calC$-labeled oriented framed knots obtained from $G$ and $G'$, respectively, by choosing a $\varphi$-compatible label $V \in \calI$, then the invariants 
\begin{align*}
 J'_\calC(W^{(n)},G) &= F'_\calC(L^{(n)}_{w,\varphi} \cup G_V) = \rmt_V(f^{(n)}_G), \\*
 J'_\calC(W^{(n)},G') &= F'_\calC(L^{(n)}_{w,\varphi} \cup G'_V) = \rmt_V(f^{(n)}_{G'})
\end{align*}
defined by Theorem~\ref{T:admissible_4-dim_2-hb-invariant} are given by the traces of the endomorphisms of $V \in \calI$ defined by
\begin{equation}\label{E:Akbulut_cork_morphisms}
 \pic{Akbulut_corks_morphisms}
\end{equation}
respectively, where $\omega : \one \to \calE \otimes \calE$ is the Hopf copairing defined by
\[
 \pic{Akbulut_corks_copairing}
\]
and where $\mu^{(n)} : \calE^{\otimes n} \to \calE$ is the iterated product recursively defined by
\[
 \pic{Akbulut_corks_products}
\]
for all $n \geqs 3$ and $\Delta^{(n)} : \calE \to \calE^{\otimes n}$ is the iterated coproduct recursively defined by
\[
 \pic{Akbulut_corks_coproducts}
\]
for all $n \geqs 3$. Notice that \cite[Section~5]{LY23} claims the construction of a semisimple invariant $I_+$ that distinguishes $(W^{(n)},G)$ from $(W^{(n)},G')$, but crucially $I_+$ is not an isotopy invariant.

Let us also discuss another example. The two banded unlink diagrams
\[
 \pic{Hayden-Sundberg_discs_diagrams}
\]
represent the exotic ribbon disks $\varSigma = \varSigma(U,B), \varSigma' = \varSigma(U',B') \subset D^4$ appearing in \cite[Figure~1]{HS21}. Indeed, there exists a homeomorphism $f : D^4 \to D^4$ satisfying $f(\varSigma) = \varSigma'$, but no such diffeomorphism. If $w$ is a Bobtcheva--Messia element of $\calE$, if $\varphi$ is a compatible Hennings form on $\calE$, if $\rmt$ is a trace on an ideal $\calI \subset \calC$, and if $\calV \in \calI$ is a banded unlink module for a $\varphi$-compatible Frobenius algebra $\calF \in \calC$, then the invariants 
\begin{align*}
 J'_\calC(D^4,\varSigma) &= F'_\calC(G_{\calV,\calF}(U,B)) = \rmt_V(f_{U,B}), \\*
 J'_\calC(D^4,\varSigma') &= F'_\calC(G_{\calV,\calF}(U',B')) = \rmt_V(f_{U',B'})
\end{align*}
defined by Theorem~\ref{T:ribbon_surface_invariant_modified_trace} are given by the traces of the endomorphisms of $\calV \in \calI$ defined by
\begin{equation}\label{E:Hayden-Sundberg_discs_morphisms}
 \pic{Hayden-Sundberg_discs_morphisms}
\end{equation}

For the moment, we have not found any Hennings forms and Bobtcheva--Messia elements that could tell apart the morphisms defined by Equation~\eqref{E:Akbulut_cork_morphisms}, or any banded unlink modules that could tell apart the morphisms defined by Equation~\eqref{E:Hayden-Sundberg_discs_morphisms}, see Sections~\ref{S:boundary_invariants}--\ref{S:quantum_groups}. However, in Section~\ref{S:exact_module_categories}, we discuss possible strategies to attack this problem, which will be the subject of future investigation.

\subsection{Banded unlink modules from boundary Frobenius algebras}\label{S:boundary_invariants}

Let $\calC$ be a unimodular ribbon category $\calC$. Every object $X \in \calC$ induces a ribbon Frobenius algebra $\calF = X \otimes X^* \in \calC$, called the \textit{boundary Frobenius algebra of $X$}, with
\begin{align*}
 \pic{boundary_example_product_1} &:= \pic{boundary_example_product_2} &
 \pic{boundary_example_unit_1} &:= \pic{boundary_example_unit_2} \\*[10pt]
 \pic{boundary_example_coproduct_1} &:= \pic{boundary_example_coproduct_2} &
 \pic{boundary_example_counit_1} &:= \pic{boundary_example_counit_2}
\end{align*}
Every object $Y \in \calC$ determines a left $\calF$-module $\calV = X \otimes Y$ with left action $\rho : \calF \otimes \calV \to \calV$ defined by
\begin{align*}
 \pic{boundary_example_module_1} &:= \pic{boundary_example_module_2}
\end{align*}
If $Y$ is transparent and $\dim_\calC(Y) = 1$, then $\calV$ is a banded unlink module. For instance, if $\rmt$ is a trace on an ideal $\calI \subset \calC$, and if $X \in \calC$ and $Y = \one$, then the invariant $J'_\calC(W,\varSigma)$ defined by Theorem~\ref{T:admissible_4-dim_2-hb-invariant} coincides with $J'_\calC(W,\partial \varSigma_X)$, where $\partial \varSigma_X$ denotes the boundary of $\varSigma$ labeled by $X$.

\subsection{Banded unlink modules from fundamental Frobenius algebras}\label{S:fundamental_Frobenius}

Let $\calC$ be a unimodular ribbon category $\calC$. As explained in \cite[Appendix~A.2]{FS10}, a Hopf algebra $\calH \in \calC$ with normalized two-sided integral element $\Lambda$ and two-sided integral form $\lambda$ admits a Frobenius algebra structure, which is obtained by preserving the algebra structure, and by replacing the coalgebra structure with
\[
 \pic{axioms_Hopf_Frobenius_1}
\]
Then, every object $V \in \calC$ determines a left $\calF$-module $\calV = V$ for the ribbon Frobenius algebra $\calF = \calE$, called the \textit{fundamental Frobenius algebra of $\calC$}, with left action $\rho : \calF \otimes \calV \to \calV$ defined by
\begin{align*}
 \pic{fundamental_example_action_1} &:= \pic{fundamental_example_action_2}
\end{align*}
because
\begin{gather*}
 \pic{fundamental_example_product} \\[10pt]
 \pic{fundamental_example_unit}
\end{gather*}
The band morphism $\calB : \calV^* \otimes \calV \to \calV^* \otimes \calV$ is given by
\[
 \pic{fundamental_example_band}
\]
In particular, $\calB$ is always a transparent morphism. When $\calC$ is factorizable, the band morphism $\calB : \calV^* \otimes \calV \to \calV^* \otimes \calV$ of a simple object $\calV = V$ is a scalar multiple of $\rcoev_V \circ \lev_V$. Therefore, in this case, if $\rmt$ is a trace on an ideal $\calI \subset \calC$, and if $V \in \calI$ is a banded unlink module over $\calE$, then it is also a banded unlink module over $V \otimes V^*$, and the two corresponding invariants defined by Theorem~\ref{T:admissible_4-dim_2-hb-invariant} coincide. In general, an object $\calV = V$ is a banded unlink module for $\calF = \calE$ if and only if $\ptr_\rmL(\calB) = \id_V$. It turns out that this condition is quite restrictive. For instance, if $\calC = \mods{H}$ for a unimodular ribbon Hopf algebra $H$, then a left $H$-module $\calV = V$ is a banded unlink module for the adjoint representation $\calF = \myuline{H}$ if and only if
\[
 \tr_V(g S(\Lambda_{(1)})) \Lambda_{(2)} \cdot v = v
\]
for every $v \in V$. Since $\tr_V(g \_) \in \QC(H)$ is a left quantum character, we can consider $z_V := \tr_V(g S(\Lambda_{(1)})) \Lambda_{(2)} \in \rmZ(H)$, which is the unique central element that satisfies $\lambda(z_V \_) = \tr_V(g \_)$, and $V$ is a banded unlink module for $\myuline{H}$ if and only if
\[
 z_V \cdot v = v
\]
for all $v \in V$. For instance, for the regular representation $\calV = H$, since \cite[Theorem~10.4.1]{R12} gives
\[
 \tr_H(g x) = \lambda(S(\Lambda_{(1)}) \Lambda_{(2)} x)
\]
for every $x \in H$, the condition becomes
\[
 S(\Lambda_{(1)}) \Lambda_{(2)} = 1.
\]
Since $S(\Lambda_{(1)}) \Lambda_{(2)} = \varepsilon(\Lambda) 1$, this means that $H$ is a banded unlink module for $\myuline{H}$ if and only if $H$ is semisimple.

\subsection{Banded unlink forms from quantum groups}\label{S:quantum_groups}

As explained in \cite[Section~9.1]{BD21}, for every simple complex Lie algebra $\frakg$, the \textit{small quantum group $\fraku_q \frakg$} at $q = e^{\frac{2 \pi i}{r}}$ for $r \in \N$ greater than the maximal lenght of a simple root of $\frakg$ is a unimodular ribbon Hopf algebra, and therefore $\calC = \mods{\fraku_q \frakg}$ is a unimodular ribbon category. 

When $\frakg = \fsl_2$, the invariant $J_\calC(W)$ corresponding to a normalized two-sided integral element $\Lambda \in \fraku_q \fsl_2$ and left integral form $\lambda \in \fraku_q \fsl_2^*$ was shown to only depend on the topology of $\partial W$ and on the homology of $W$ for all values of $r \in \N$. In particular, when $r \not\equiv 0 \pmod 4$, the ribbon category $\calC$ is factorizable, so \cite[Theorem~1.2]{BD21} implies that $J_\calC(W)$ only depends on the boundary $\partial W$, on the signature $\sigma(W)$, and on the Euler characteristic $\chi(W)$. When $r \equiv 0 \pmod 4$, the ribbon category $\calC$ is not factorizable, but \cite[Proposition~7.1 \& Theorem~8.1]{BD22} imply that $J_\calC(W)$ can be written as a sum of invariants that depend on a cohomology class in $H^2(W,\partial W;\Z/2\Z)$, which can be interpreted as a spin structure on $\partial W$ when $r \equiv 0 \pmod 8$. This decomposition is analogue to splitting formulas decomposing WRT (Witten--Reshetikhin--Turaev) invariants that date back to Turaev \cite[Section~4]{T91} and Kirby--Melvin \cite[Theorems~8.27 \& 8.32]{KM91}. 

A comprehensive list of Bobtcheva--Messia elements $w \in \fraku_q \fsl_2$ and compatible Hennings elements $z \in \fraku_q \fsl_2$ for prime values of $r$ is given in \cite[Section~8.13]{BM02}. There are at most four of them, and all are shown to yield the Hennings invariant, the WRT invariant, or (possibly) a combination of the two.

When $r$ is odd, a set of central idempotents of $\fraku_q \fsl_2$ that yield ribbon surface invariants can be found in \cite[Lemma~14]{K94}, and in \cite[Section~4.6]{DM20} their action on the left integral form $\lambda \in \fraku_q \fsl_2^*$ is expressed in terms of quantum traces and quantum pseudo-traces on simple and indecomposable projective $\fraku_q \fsl_2$-modules. In particular, \cite[Lemmas~4.2 \& 4.3]{DM20} imply that the only indecomposable projective $\fraku_q \fsl_2$-module $P$ such that 
\[
 \tr_P(g S(\Lambda_{(1)})) \Lambda_{(2)} \cdot v = v
\]
for every $v \in P$ is the Steinberg representation, which is the unique simple projective representation of $\fraku_q \fsl_2$-module, denoted $X_{r-1}$ in \cite[Section~4.4]{DM20}.

It should also be noted that, when $w = \Lambda$ and $\varphi = \lambda$, numerical computations for $r \leqs 16$ cannot distinguish the two morphisms defined by Equation~\eqref{E:Akbulut_cork_morphisms} for any $V \in \calC$.

\subsection{Sources of further examples}\label{S:exact_module_categories}

As pointed out to us by Azat Gainutdinov, exact module categories should be a fruitful source of examples for our construction, since they produce Frobenius algebras whose modules are well understood. The search for banded unlink modules arising this way will be the subject of future work, so we will keep this section short. More details can be found in \cite{S19}.

Let $\calC$ be a unimodular ribbon category. A \textit{left $\calC$-module category} is a category $\calM$ equipped with an \textit{action} $\triangleright : \calC \times \calM \to \calM$, and with natural isomorphisms of components
\begin{align*}
 (X \otimes Y) \triangleright M &\to X \triangleright (Y \triangleright M), &
 \one \triangleright M \to M
\end{align*}
for all $X,Y \in \calC$ and $M \in \calM$, satisfying a list of axioms that can be found in \cite[Definition~7.1.1]{EGNO15}. Since $\calC$ is finite, for every object $M \in \calM$, the functor $\_ \triangleright M : \calC \to \calM$ has a right adjoint, which is denoted $\myuline{\Hom}_\calM(M,\_) : \calM \to \calC$. This can be extended to a functor
\[
 \myuline{\Hom}_\calM : \calM^\op \times \calM \to \calC
\]
and, by definition, there exists a natural isomorphism of components
\[
 \Hom_\calC(X,\myuline{\Hom}_\calM(M,N)) \to \Hom_\calM(X \triangleright M,N)
\]
for all $X \in \calC$ and $N \in \calM$. In particular, 
\[
 \myuline{\End}_\calM(M) = \myuline{\Hom}_\calM(M,M) \in \calC
\]
is naturally an algebra for every $M \in \calM$, and sometimes it is a Frobenius algebra too, see for example \cite[Theorem~3.14]{S19}. The \textit{adjoint algebra} 
\[
 \calA = \int_{M \in \calM} \myuline{\End}_{\calM}(M)
\]
may also sometimes be a Frobenius algebra. For instance, when the unimodular category $\calC$ acts on itself, the adjoint algebra $\calA$ is a Frobenius algebra in the Drinfeld center $\calZ(\calC)$ of $\calC$, see \cite[Corollary~5.10]{S14}. Furthermore, if $\calM$ is pivotal and $G$ is a projective generator of $\calM$, then the category of right $\myuline{\End}_{\calM}(G)$-modules in $\calC$ is equivalent to the category $\calM$ itself as left $\calC$-module categories, see \cite[Corollary~3.16]{S19}. This result can be used in order to better understand left $\myuline{\End}_{\calM}(G)$-modules, and to look for banded unlink modules among them. A similar result holds for the adjoint algebra, as the category of right $\calA$-modules in $\calZ(\calC)$ is equivalent to the category of $\calC$-linear endofunctors of $\calM$.

An efficient way of producing module categories is to find an algebra $A$ in the category $\comods{H}$ of finite-dimensional left $H$-comodules for a finite-dimensional Hopf algebra $H$. Then, the category $\calC := \mods{H}$ of finite-dimensional left $H$-modules acts on the category $\calM := \mods{A}$ of finite-dimensional left $A$-modules, as explained in \cite[Section~4.2]{S19}. In order to find such an algebra $A$, one may consider a coideal of $H$ and a deformation of the coproduct of $H$, as in \cite[Section~4.1]{W23}, for example. Another possibility is to consider a Hopf $G$-coalgebra $H$ for an abelian group $G$, and to let $A$ be the degree $0$ part of $H$. Then, $A$ is naturally an algebra in $\comods{H}$. For example, one may consider the abelian group $G=\Z/2\Z$ and the unimodular ribbon Hopf $G$-coalgebra $H = \tilde{U}_q \fsl_2$ of \cite[Section~5]{BD22}. In this case, $\calC := \mods{H}$ is a unimodular ribbon category, and every left $A$-module $M \in \calM := \mods{A}$ induces a Frobenius algebra $\myuline{\End}_\calM(M) \in \calC$.

\appendix

\section{Proofs}\label{A:proofs}

\subsection{Proof of Proposition~\ref{P:ce_qc}}\label{A:proof_ce_qc}

\begin{proof}[Proof of Proposition~\ref{P:ce_qc}] 
 On one hand, if $z$ is a central element of $\calH$, then $\Phi_\varphi(z)$ is a left quantum character on $\calH$. Indeed, this can be proved as shown.
 \begin{align*}
  &\pic{axioms_ce_to_qc_1} \\
  &\pic{axioms_ce_to_qc_2}
 \end{align*}
 
 On the other hand, if $\varphi$ is a left quantum character on $\calH$, then $\Psi_z(\varphi)$ is a central element of $\calH$. Indeed, this can be proved using \cite[Table~3.1.3, Equations~(s$4$) \& (s$7$)]{BBDP23} as shown.
 \begin{align*}
  &\pic{axioms_qc_to_ce_1} \\
  &\pic{axioms_qc_to_ce_2} \\
  &\pic{axioms_qc_to_ce_3} \\
  &\pic{axioms_qc_to_ce_4}
 \end{align*}
\end{proof}

\clearpage

\subsection{Proof of Proposition~\ref{P:Hennings}}\label{A:proof_Hennings}

\begin{proof}[Proof of Proposition~\ref{P:Hennings}]
 If $z$ is a Hennings element of $\calH$, then $\Phi_\varphi(z)$ is a Hennings form on $\calH$. Indeed, this can be proved using Equation~\eqref{E:left_integral_form} as shown.
 \begin{align*}
  &\pic{Hennings_element_to_form_1} \\
  &\pic{Hennings_element_to_form_2} \\
  &\pic{Hennings_element_to_form_3}
 \end{align*}
\end{proof}

\clearpage

\subsection{Proof of Proposition~\ref{P:BU_form}}\label{A:proof_BU_form}

\begin{proof}[Proof of Proposition~\ref{P:BU_form}]
 If $e$ is an an\-ti\-pode-in\-var\-i\-ant central idempotent of $\calH$, then $\Phi_\varphi(e)$ is a banded unlink form on $\calH$ that is compatible with the Bobtcheva--Messia element $w$ of $\calH$ and with the Hennings form $\varphi$ on $\calH$. Indeed, this follows from
 \[
  \pic{banded_unlink_form_swim}
 \]
 and from
 \begin{align*}
  &\pic{banded_unlink_form_cup_1} \\
  &\pic{banded_unlink_form_cup_2} \\
  &\pic{banded_unlink_form_cup_3} \\
  &\pic{banded_unlink_form_cup_4}
 \end{align*}
\end{proof}

\clearpage

\subsection{Proof of Proposition~\ref{P:KLRT_functor}}\label{A:proof_KLRT}

\begin{proof}[Proof of Proposition~\ref{P:KLRT_functor}]  
 Before showing that $F_\calC : \calK_\calC \to \calC$ is a well-defined functor, let us start by showing that every $\calC$-labeled Kirby graph $L \cup G$ admits a bottom-top presentation $\tilde{L} \cup \tilde{G}$. In order to define it, let $p_1, \ldots, p_k$ denote a family of points uniformly distributed to the left of the incoming boundary of $G$ on $[0,1] \times \{ (\frac{1}{2},0) \} \subset [0,1]^{\times 3}$, and let $q'_1, \ldots, q'_{k'}$ denote a family of points uniformly distributed to the left of the outgoing boundary of $G$ on $[0,1] \times \{ (\frac{1}{2},1) \} \subset [0,1]^{\times 3}$, both ordered from left to right. Let us choose numberings for the components of $L_1$ (from $L_{1,1}$ to $L_{1,k}$) and of $L_2$ (from $L_{2,1}$ to $L_{2,k'}$), let us choose an orientation for the $j$th component $L_{1,j}$ of $L_1$ and call $q_j$ the center of its Seifert disk for every integer $1 \leqs j \leqs k$, and let us choose an orientation and a basepoint $p'_j$ on the $j$th component $L_{2,j}$ of $L_2$ for every integer $1 \leqs j \leqs k'$. Then, let us consider a family $\gamma$ of pairwise disjoint framed arcs $\gamma_j$ joining $p_j$ to $q_j$ for every integer $1 \leqs j \leqs k$, and a family $\gamma'$ of pairwise disjoint framed arcs $\gamma'_j$ joining $p'_j$ to $q'_j$ for every integer $1 \leqs j \leqs k'$, whose interiors are disjoint from $L \cup G$. We denote by $\tilde{L} \cup \tilde{G}$ the union of the green tangle $\tilde{L}$ and of the $\calC$-labeled ribbon graph $\tilde{G}$ obtained from $L \cup G$ by first isotopying $L_{1,j}$ along $\gamma_j$ and below $p_j$ for every $1 \leqs j \leqs k$, as shown  
 \[
  \pic{bottom-top_presentation_1}
 \] 
 and then pulling $L_{2,j}$ along $\gamma_j$ and above $q'_j$ for every $1 \leqs j \leqs k'$, as shown
 \[
  \pic{bottom-top_presentation_2}
 \]
 Then, by construction, $\tilde{L} \cup \tilde{G}$ is a bottom-top presentation of $L \cup G$. In order to prove that $F_\calC$ is well-defined, we need to show that it is independent of all the choices made: the families of framed paths $\gamma$ and $\gamma'$ and the numbering,  orientations and basepoints of the components of $L$.
 
 \textit{Independence of framings.} First, we claim that $F_\calC$ is independent of the choice of the framings of $\gamma$. In order to prove this, it is sufficient to show that we can insert a positive kink on any framed path $\gamma_j$ for every integer $1 \leqs j \leqs k$. Up to isotopy, a kink insertion along $\gamma_j$ is represented by the following picture.
 \[
  \pic{independence_of_framings_4}
 \]
 This means that
 \[
  \pic{independence_of_framings_5}
 \]
 Then the claim follows from the fact that $z_j$ is an element of $\calE$, which implies
 \begin{equation}\label{E:framing_bottom}
  \pic{independence_of_framings_6}
 \end{equation}
 Similarly, we claim that $F_\calC$ is independent of the choice of the framings of $\gamma'$. In order to prove this, it is sufficient to show that we can insert a positive kink on any framed path $\gamma'_j$ for every integer $1 \leqs j \leqs k'$. Up to isotopy, a kink insertion along $\gamma'_j$ is represented by the following picture.
 \[
  \pic{independence_of_framings_1}
 \]
 This means that
 \[
  \pic{independence_of_framings_2}
 \]
 Then the claim follows from the fact that $\varphi_j$ is a form on $\calE$, which implies
 \begin{equation}\label{E:framing_top}
  \pic{independence_of_framings_3}
 \end{equation}
 
 \textit{Independence of paths.} Next, we claim that $F_\calC$ is independent of the choice of the paths $\gamma$. In order to prove this, it is sufficient to show that every overcrossing of $\gamma_j$ with the rest of $\tilde{L} \cup \tilde{G}$ can be exchanged for an undercrossing for every integer $1 \leqs j \leqs k$. Up to isotoping the desired double point to the bottom of the diagram, an exchange of a crossing of $\gamma_j$ with a $V$-labeled black edge is represented by the following picture, and the same applies to an exchange of a crossing with a green edge. 
 \[
  \pic{independence_of_paths_5}
 \]
 This means that, on one hand,
 \[
  \pic{independence_of_paths_6}
 \]
 while, on the other hand,
 \[
  \pic{independence_of_paths_7}
 \]
 Then the claim follows from the fact that $z_j$ is an element of $\calE$, which implies
 \begin{equation}\label{E:path_bottom}
  \pic{independence_of_paths_8}
 \end{equation}
 Similarly, we claim that $F_\calC$ is independent of the choice of the paths $\gamma'$. In order to prove this, it is sufficient to show that every overcrossing of $\gamma'_j$ with the rest of $\tilde{L} \cup \tilde{G}$ can be exchanged for an undercrossing for every integer $1 \leqs j \leqs k'$. Up to isotoping the desired double point to the top of the diagram, an exchange of a crossing of $\gamma'_j$ with a $V$-labeled black edge is represented by the following picture, and the same applies to an exchange of a crossing with a green edge. 
 \[
  \pic{independence_of_paths_1}
 \]
 This means that, on one hand,
 \[
  \pic{independence_of_paths_2}
 \]
 while, on the other hand,
 \[
  \pic{independence_of_paths_3}
 \]
 Then the claim follows from the fact that $\varphi_j$ is a form on $\calE$, which implies
 \begin{equation}\label{E:path_top}
  \pic{independence_of_paths_4}
 \end{equation}
 
 \textit{Independence of numbering.} Next, we claim that $F_\calC$ is independent of the choice of the numbering of the components of $L_1$. In order to prove this, it is sufficient to show that the components $L_{1,j}$ and $L_{1,j+1}$ can be transposed for every integer $1 \leqs j < k$. This operation is represented by the following picture. 
 \[
  \pic{independence_of_order_4}
 \]
 This means that
 \[
  \pic{independence_of_order_5}
 \]
 Then the claim follows from the fact that $z_j$ and $z_{j+1}$ are elements of $\calE$, which implies
 \begin{equation}\label{E:order_bottom}
  \pic{independence_of_order_6}
 \end{equation}
 Similarly, we claim that $F_\calC$ is independent of the choice of the numbering of the components of $L_2$. In order to prove this, it is sufficient to show that the components $L_{2,j}$ and $L_{2,j+1}$ can be transposed for every integer $1 \leqs j < k'$. This operation is represented by the following picture. 
 \[
  \pic{independence_of_order_1}
 \]
 This means that
 \[
  \pic{independence_of_order_2}
 \]
 Then the claim follows from the fact that $\varphi_j$ and $\varphi_{j+1}$ are forms on $\calE$, which implies
 \begin{equation}\label{E:order_top}
  \pic{independence_of_order_3}
 \end{equation}
 
 \textit{Independence of orientations.} Next, we claim that $F_\calC$ is independent of the choice of the orientation of $L_{1,j}$ for every integer $1 \leqs j \leqs k$. Indeed, an orientation reversal on $L_{1,j}$ is represented by the following picture. 
 \[
  \pic{independence_of_orientations_4}
 \]
 This means that
 \[
  \pic{independence_of_orientations_5}
 \]
 Then the claim follows from the fact that $z_j$ is an an\-ti\-pode-in\-var\-i\-ant element of $\calE$, which implies
 \begin{equation}\label{E:orientation_bottom}
  \pic{independence_of_orientations_6}
 \end{equation}
 Similarly, we claim that $F_\calC$ is independent of the choice of the orientation of $L_{2,j}$ for every integer $1 \leqs j \leqs k'$. Indeed, an orientation reversal on $L_{2,j}$ is represented by the following picture. 
 \[
  \pic{independence_of_orientations_1}
 \]
 This means that
 \[
  \pic{independence_of_orientations_2}
 \]
 Then the claim follows from the fact that $\varphi_j$ is an an\-ti\-pode-in\-var\-i\-ant form on $\calE$, which implies
 \begin{equation}\label{E:orientation_top}
  \pic{independence_of_orientations_3}
 \end{equation}
 
 \textit{Independence of basepoints.} Next, we claim that $F_\calC$ is independent of the position of $L_{1,j}$, and thus of the center $q_j$ of its Seifert disk, for every integer $1 \leqs j \leqs k$. Indeed, let us isotope $L_{1,j}$ to a different position, and let $\tilde{q}_j$ denote the center of the corresponding Seifert disk. Up to isotoping portions of $L_{1,j} \cup L_2 \cup G$ containing $q_j$ and $\tilde{q}_j$ to the bottom of the diagram, making sure the one containing $\tilde{q}_j$ is nested inside the one containing $q_j$ with opposite orientation, this operation is represented by the following picture.
 \[
  \pic{independence_of_basepoints_5}
 \]
 This means that, on one hand, 
 \[
  \pic{independence_of_basepoints_6}
 \]
 while, on the other hand,
 \[
  \pic{independence_of_basepoints_7}
 \]
 Then the claim follows from the fact that $z_j$ is a central element of $\calE$, which implies
 \begin{equation}\label{E:basepoint_bottom}
  \pic{independence_of_basepoints_8}
 \end{equation}
 Similarly, we claim that $F_\calC$ is independent of the choice of the basepoint $p'_j$ for every integer $1 \leqs j \leqs k'$. Indeed, let $\tilde{p}'_j$ be another possible choice. Up to isotoping portions of $L_{2,j}$ containing $p'_j$ and $\tilde{p}'_j$ to the top of the diagram, making sure the one containing $\tilde{p}'_j$ is nested inside the one containing $p'_j$ with opposite orientation, this operation is represented by the following picture.
 \[
  \pic{independence_of_basepoints_1}
 \]
 This means that, on one hand,
 \[
  \pic{independence_of_basepoints_2}
 \]
 while, on the other hand,
 \[
  \pic{independence_of_basepoints_3}
 \]
 Then the claim follows from the fact that $\varphi_j$ is a quantum character on $\calE$, which implies
 \begin{equation}\label{E:basepoint_top}
  \pic{independence_of_basepoints_4}
 \end{equation}
 
 \textit{Functoriality.} If $(\myuline{\varepsilon},\myuline{V})$ is an object of $\calK_\calC$, then we clearly have
 \[
  F_\calC(\id_{(\myuline{\varepsilon},\myuline{V})}) = \id_{F_\calC(\myuline{\varepsilon},\myuline{V})}.
 \]
 If $L \cup G : (\myuline{\varepsilon},\myuline{V}) \to (\myuline{\varepsilon'},\myuline{V'})$ and $L' \cup G' : (\myuline{\varepsilon'},\myuline{V'}) \to (\myuline{\varepsilon''},\myuline{V''})$ are morphisms of $\calK_\calC$, if $\tilde{L} \cup \tilde{G}$ is a bottom-top presentation of $L \cup G$, and if $\tilde{L}' \cup \tilde{G}'$ is a bottom-top presentation of $L' \cup G'$, then a bottom-top presentation of $(L' \cup G') \circ (L \cup G)$ is represented by the following picture.
 \[
  \pic{functoriality_1}
 \]
 This means that]
 \[
  \pic{functoriality_2}
 \]
 Then the claim follows from the fact that $\varphi_1, \ldots, \varphi_{k'},\varphi'_1, \ldots, \varphi'_{\ell'}$ are forms on $\calE$ and $z_1, \ldots, z_k, z'_1, \ldots, z'_\ell$ are elements of $\calE$, which implies
 \[
  \pic{functoriality_3}
 \]

 \textit{Monoidality.} If $(\myuline{\varepsilon},\myuline{V})$ and $(\myuline{\varepsilon'},\myuline{V'})$ are objects of $\calK_\calC$, then we clearly have
 \[
  F_\calC((\myuline{\varepsilon},\myuline{V}) \otimes (\myuline{\varepsilon'},\myuline{V'})) = F_\calC(\myuline{\varepsilon},\myuline{V}) \otimes F_\calC(\myuline{\varepsilon'},\myuline{V'})).
 \]
 If $L \cup G : (\myuline{\varepsilon},\myuline{V}) \to (\myuline{\varepsilon''},\myuline{V''})$ and $L' \cup G' : (\myuline{\varepsilon'},\myuline{V'}) \to (\myuline{\varepsilon'''},\myuline{V'''})$ are morphisms of $\calK_\calC$, if $\tilde{L} \cup \tilde{G}$ is a bottom-top presentation of $L \cup G$, and if $\tilde{L}' \cup \tilde{G}'$ is a bottom-top presentation of $L' \cup G'$, then a bottom-top presentation of $(L \cup G) \otimes (L' \cup G')$ is represented by the following picture.
 \[
  \pic{monoidality_1}
 \]
 This means that
 \[
  \pic{monoidality_2}
 \]
 Then the claim follows from the fact that $\varphi_1, \ldots, \varphi_{k'},\varphi'_1, \ldots, \varphi'_{\ell'}$ are forms on $\calE$ and $z_1, \ldots, z_k, z'_1, \ldots, z'_\ell$ are elements of $\calE$, which implies
 \[
  \pic{monoidality_3}
 \]
 
 \textit{Braidings and twists.} Since the ribbon structure of $\calK_\calC$ coincides with that of $\calR_\calC$, and since the restriction of $F_\calC$ to $\calR_\calC$ is a ribbon functor, then $F_\calC$ is a ribbon functor too. \qedhere
\end{proof}

\subsection{Proof of Proposition~\ref{P:canceling_pairs}}\label{A:proof_canceling_pairs}

\begin{proof}[Proof of Proposition~\ref{P:canceling_pairs}] 
 If $L_{1,1}$ and $L_{2,1}$ form a canceling pair, then the operation of removing $L_{1,1} \cup L_{2,1}$ is represented by the following picture. 
 \[
  \pic{invariance_under_canceling_pairs_1}
 \]
 This means that, on one hand,
 \[
  \pic{invariance_under_canceling_pairs_2}
 \]
 while, on the other hand,
 \[
  \pic{invariance_under_canceling_pairs_3}
 \]
 If $z_1$ is a Bobtcheva--Messia element of $\calE$ and $\varphi_1$ is a compatible Hennings form on $\calE$, then we have
 \begin{equation}\label{E:canceling_pair}
  \pic{invariance_under_canceling_pairs_4}
 \end{equation}
\end{proof}

\subsection{Proof of Proposition~\ref{P:slides_swims}}\label{A:proof_slides_swims}

\begin{proof}[Proof of Proposition~\ref{P:slides_swims}] 
 The operation of sliding $L_{2,1}$ over $L_{2,2}$ is represented by the following picture. 
 \[
  \pic{invariance_under_handle_slides_1}
 \]
 This means that
 \[
  \pic{invariance_under_handle_slides_2}
 \]
 If $\varphi_1$ is a banded unlink form on $\calE$ and $\varphi_2$ is a compatible Hennings form on $\calE$, then we have
 \begin{equation}\label{E:handle_slide}
  \pic{invariance_under_handle_slides_3}
 \end{equation}
\end{proof}

\clearpage

\subsection{Proof of Proposition~\ref{P:cup}}\label{A:proof_cup}

\begin{proof}[Proof of Proposition~\ref{P:cup}] 
 If $L_{1,1}$ and $L_{2,1}$ form a chain pair based at $L_{2,2}$, then the operation of removing $L_{1,1} \cup L_{2,1}$ is represented by the following picture.
 \[
  \pic{invariance_under_chain_moves_1}
 \]
 This means that
 \[
  \pic{invariance_under_chain_moves_2}
 \]
 If $z_1$ is a Bobtcheva--Messia element of $\calE$ that is compatible with the banded unlink form $\varphi_1 = \varphi_2$ on $\calE$, then we have
 \[
  \pic{invariance_under_chain_moves_3}
 \]
\end{proof}

%\clearpage

\subsection{Proof of Proposition~\ref{P:arbitrary_slides}}\label{A:proof_arbitrary_slides}

\begin{proof}[Proof of Proposition~\ref{P:arbitrary_slides}] 
 The operation of sliding a $V$-labeled edge of $G$ over $L_{2,1}$ is represented by the following picture. 
 \[
  \pic{invariance_under_arbitrary_slides_1}
 \]
 This means that, on one hand,
 \[
  \pic{invariance_under_arbitrary_slides_2}
 \]
 while, on the other hand,
 \[
  \pic{invariance_under_arbitrary_slides_3}
 \]
 If $V$ is compatible with the Hennigs form $\varphi_1$ on $\calE$, then we have
 \begin{equation}\label{E:arbitrary_slide}
  \pic{invariance_under_arbitrary_slides_4}
 \end{equation}
\end{proof}

\subsection{Proof of Proposition~\ref{P:algorithm}}\label{A:proof_algorithm}

\begin{proof}[Proof of Proposition~\ref{P:algorithm}]
 Let us call $q_j$ the center of a Seifert disk for the $j$th component $L_{1,j}$ of $L_1$ for every integer $1 \leqs j \leqs k$, and let us call $p'_j$ the accumulation point for all the beads sitting on the $j$th component $L_{2,j}$ of $L_2$ for every integer $1 \leqs j \leqs k'$. Let us fix a bottom-top presentation $\tilde{L} \cup \tilde{G}$ of the $\calC$-labeled Kirby graph $L \cup G$ obtained by choosing a family of pairwise disjoint framed arcs $\gamma_j$ arriving at $q_j$ for every integer $1 \leqs j \leqs k$, and a family of pairwise disjoint framed arcs $\gamma'_j$ leaving from $p'_j$ for every integer $1 \leqs j \leqs k'$, and by opening $L \cup G$ as in the proof of Proposition~\ref{P:KLRT_functor}. Let $\tilde{L}_{X_1,\ldots,X_{k'}} \cup \tilde{G}$ be the $\calC$-labeled ribbon graph associated with $(X_1, \ldots, X_{k'}) \in \calC^{\times k'}$, as in the definition of $F_\calC(L \cup G)$. If we apply the algorithm to $\tilde{L}_{X_1,\ldots,X_{k'}} \cup \tilde{G}$, which no longer contains purple and green components, we obtain a $\Vect_\Bbbk$-labeled ribbon graph $B(\tilde{L}_{X_1,\ldots,X_{k'}} \cup \tilde{G})$ satisfying
 \[
  \pic{algorithm_07}
 \]
 Notice that the bottom-left part of the diagram appearing on the right can be expanded as shown without changing the result, provided we apply the algorithm to the resulting beads by computing the antipode and adding inverse pivotal elements according to the orientations.
 \[
  \pic{algorithm_08}
 \]
 Now all the beads sitting on the same cycle can be slid to the top-left part of the diagram, passing through crossings and extrema without changing the result, as shown.
 \[
  \pic{algorithm_09}
 \]
 At this point, every cycle can be homotoped to a left coevaluation without changing the result, because the ribbon structure of $\Vect_\Bbbk$ is trivial. Then, by multiplying all the beads together, we obtain labels $x_1, \ldots, x_{k'} \in H$ satisfying
 \[
  \pic{algorithm_10}
 \]
 This means that 
 \[
  \pic{algorithm_11}
 \]
 Now we can remark that all the beads introduced by the framed paths $\gamma_j$ and $\gamma'_j$ can be simplified. Indeed, let $\gamma$ be a path presented as a long knot that is obtained as the partial closure of a braid. Suppose that, while traveling along $\gamma$, we collect beads with product $a$ and spread at the same time beads with label $b_1, \ldots, b_n$ over the rest of the diagram. Then, two parallel copies of $\gamma$ with opposite orientation introduce beads with product $a_{(1)}$ and $S(a_{(2)})$, respectively, while still spreading beads with label $b_1, \ldots, b_n$ over the rest of the diagram, as shown by the following picture.
 \[
  \pic{algorithm_12}
 \]
 Notice that $a_j, b_{j,1}, \ldots, b_{j,n}$ are products of components of R-matrices and of powers of pivotal elements, which satisfy
 \begin{align*}
  \varepsilon(R'_i) R''_i &= 1 = \varepsilon(R''_i) R'_i, &
  \varepsilon(g) = 1.
 \end{align*} 
 Then, suppose that, while traveling along $\gamma_j$, we collect beads with product $a_j$ and spread at the same time beads with label $b_{j,1}, \ldots, b_{j,n}$ over the rest of the diagram. This means that
 \[
  \pic{algorithm_13}
 \]
 Then, if $\myuline{b_j} = b_{j,1} \otimes \ldots \otimes b_{j,n}$, we have
 \begin{align*}
  (a_j)_{(1)} z_j S((a_j)_{(2)}) \otimes \myuline{b_j}
  &= (a_j \triangleright z_j) \otimes \myuline{b_j} \\*
  &= \varepsilon(a_j) (z_j \otimes \myuline{b_j}) \\*
  &= z_j \otimes 1^{\otimes n},
 \end{align*}
 because $z_j$ is a central element of $H$. Similarly, suppose that, while traveling along $\gamma'_j$, we collect beads with product $a'_j$ and spread at the same time beads with label $b'_{j,1}, \ldots, b'_{j,n}$ over the rest of the diagram. This means that
 \[
  \pic{algorithm_14}
 \]
 Then, if $\myuline{b'_j} = b'_{j,1} \otimes \ldots \otimes b'_{j,n}$, we have
 \begin{align*}
  \varphi_j((a'_j)_{(1)} x_j S((a'_j)_{(2)})) \myuline{b'_j} 
  &= \varphi_j(x_j S((a'_j)_{(2)}) S^2((a'_j)_{(1)})) \myuline{b'_j} \\*
  &= \varphi_j(x_j S(S((a'_j)_{(1)}) (a'_j)_{(2)})) \myuline{b'_j} \\*
  &= \varepsilon(a'_j) \varphi_j(x_j S(1)) \myuline{b'_j} \\*
  &= \varphi_j(x_j) 1^{\otimes n},
 \end{align*}
 because $\varphi_j$ is a quantum character on $H$. 
 
 Now the claim follows from the observation that $x_j = g^{-1} y_j$ for every $1 \leqs j \leqs k'$, because in order to obtain $L_{2,j}$ from $\tilde{L} \cup \tilde{G}$ we need to add a right evaluation, which implies $y_j = g x_j$. \qedhere
\end{proof}

\end{document}